\renewcommand\eqref[1]{(\ref{#1})} 
\allowdisplaybreaks \numberwithin{equation}{section}
\theoremstyle{plain}
\newtheorem{theorem}{Theorem}[section]
\newtheorem{prop}[theorem]{Proposition}
\newtheorem{corollary}[theorem]{Corollary}
\newtheorem{lemma}[theorem]{Lemma}
\newtheorem{assump}[theorem]{Assumption}
\theoremstyle{definition}
\newtheorem{defn}[theorem]{Definition}
\newtheorem{rem}[theorem]{Remark}
\newtheorem{example}[theorem]{Example}
\def\ind{{\mathcal I}}
\def\sL{{\star_{L}}}
\def\sLs{\widetilde{\star}_{L}}
\newcommand{\omp}{(\overline{\Omega})}
\newcommand{\efel}{\mathcal{F}_L}
\newcommand{\efela}{\mathcal{F}_{L^*}}
\newcounter{quotecount}
\newcommand{\MyQuote}[2]{\bigskip (#2)\hspace*{1cm}
\vspace{1cm}\addtocounter{quotecount}{1}%
     \parbox{12cm}{\em #1}}
\begin{document}

\title
{Nonharmonic analysis of boundary value problems}

\author[Michael Ruzhansky]{Michael Ruzhansky}
\address{
  Michael Ruzhansky:
  \endgraf
  Department of Mathematics
  \endgraf
  Imperial College London
  \endgraf
  180 Queen's Gate, London, SW7 2AZ
  \endgraf
  United Kingdom
  \endgraf
  {\it E-mail address} {\rm m.ruzhansky@imperial.ac.uk}
  }
\author[Niyaz Tokmagambetov]{Niyaz Tokmagambetov}
\address{
  Niyaz Tokmagambetov:
  \endgraf
    al--Farabi Kazakh National University
  \endgraf
  71 al--Farabi ave., Almaty, 050040
  \endgraf
  Kazakhstan
  \endgraf
  {\it E-mail address} {\rm niyaz.tokmagambetov@gmail.com}
  }

\date{\today}

\subjclass{Primary 58J40; Secondary 35S05, 35S30, 42B05}
\keywords{Pseudo-differential operators, boundary value problems,
torus, Fourier series, non-local boundary condition, nonharmonic analysis}

\thanks{The first
 author was supported in parts by the EPSRC
 grant EP/K039407/1 and by the Leverhulme Grant RPG-2014-02.
 No new data was collected or generated during the course of the research.
 The work was also supported by the MESRK Grant 0773/GF4
 of the
Committee of Science, Ministry of Education and Science of the
Republic of Kazakhstan.
 }

\dedicatory{Dedicated to the memory of Professor Louis Boutet de Monvel (1941--2014)}

\maketitle

\begin{abstract}
In this paper we develop the global symbolic calculus of
pseudo--differential operators
generated by a boundary value problem for a
given (not necessarily self-adjoint or elliptic) differential operator.
For this, we also establish elements of a non-self-adjoint
distribution theory and the corresponding biorthogonal Fourier analysis.
We give applications of the developed analysis to obtain a-priori
estimates for solutions of boundary value problems that are elliptic within
the constructed calculus.
\end{abstract}

\tableofcontents

\section{Introduction}

In this paper we are interested in questions devoted to the global
solvability and further properties of boundary value problems in
$\mathbb R^n$. Given a problem for some pseudo-differential
operator $A$ with fixed boundary conditions in a domain
$\Omega\subset\mathbb R^n$, the main idea for our analysis is to
develop a suitable pseudo-differential calculus in which the given
boundary value problem can be solved and its solution can be
efficiently estimated. Such pseudo-differential calculus is
developed in terms of a `model' operator L with the same boundary
conditions in $\Omega$ for which we can introduce and work with
the global Fourier analysis expressed in terms of its
eigenfunctions. In general, such a model operator L does not have
to be self-adjoint, so we will be working with biorthogonal
systems rather than with an orthornomal basis to take into account a
possible non-self-adjointness. The operator L also does not have to
be elliptic.

Different powerful approaches to boundary value problems for
pseudo-dif\-feren\-tial operators have been already developed, see e.g.
Boutet de Monvel \cite{Boutet:Acta} and many subsequent works
by, among others, the Mazya school (see e.g. \cite{Mazya-Soloviev:bk}),
Melrose school (see e.g. \cite{Mazzeo-Melrose:fibred}),
or Schulze school (see e.g. \cite{Harutyunyan-Schulze:bk}), 
see also approaches in e.g. 
Eskin \cite{Eskin:bk-1981},
Schrohe and Schulze \cite{Schrohe-Schulze:Mellin-1999}, 
Melo, Schick and Schrohe \cite{Melo-Schick-Schrohe:K-2006},
Mitrea and Nistor \cite{Mitrea-Nistor:BVPs-2007},
Plamenevskii \cite{Plamenevskii:BVP-1997},
and references therein.

\smallskip
However, our approach is rather different from all these by being global in nature.
An example of such an approach is the toroidal calculus of
pseudo-differential operators on the torus $\mathbb T^n$ or
of the periodic pseudo-differential operators on $\mathbb R^n$.
A global analysis of pseudo-differential operators on the torus
based on the Fourier series representations of functions
with further applications to the spectral theory was
originated by Agranovich \cite{agran}, with further
developments of its different aspects
by Agranovich \cite{agran2}, Amosov \cite{Amosov},
Elschner \cite{Elschner}, McLean \cite{McLean}, Melo \cite{Me97},
Pr\"ossdorf and Schneider \cite{ProssdorfSchneider}, Saranen and
Wendland \cite{SaranenWendland}, Turunen
and Vainikko \cite{TurunenVainikko}, Vainikko and Lifanov
\cite{VainikkoLifanov1}, and others. However,
most of these papers deal with one-dimensional cases or with
classes of operators rather than with classes of symbols.
A consistent development of the application of the classical
Fourier series techniques in the analysis of
pseudo-differential operators on the torus
was developed by the first author and Turunen
in \cite{Ruzhansjy-Turunen:NFA,Ruzhansky-Turunen-JFAA-torus} and
can be also found in the
monograph \cite{RT}. For further extensions of this periodic analysis to the almost
periodic setting see e.g. Wahlberg \cite{Wahlberg:2009,Wahlberg:2012}.
The
classical Fourier series on a circle $\mathbb T=\mathbb R/\mathbb Z$
can be viewed as a unitary transform in the Hilbert
space $L^{2}(0,1)$ generated by the
operator of differentiation $(-i\frac{d}{dx})$ with periodic boundary conditions,
because the system of exponents $\{\exp(2\pi i\lambda x), \, \lambda\in \mathbb Z\}$
is a system of its eigenfunctions.

The analysis of this paper is the development of such ideas to a more
general setting without assuming that the problem has symmetries.
Instead of the differential operator $(-i\frac{d}{dx})$ in the
space $L^{2}(0,1),$ we consider a differential
operator ${\rm L}$ of order $m$ with smooth coefficients,
in the Hilbert space $L^{2}(\Omega),$ where
$\Omega\subset\mathbb R^{n}$ is an open subset.
We assume that L is equipped with some boundary conditions leading to
a discrete spectrum with its family
of eigenfunctions yielding a (biorthogonal) basis in $L^{2}(\Omega)$.
Moreover, L does not have to be self-adjoint.
General biorthogonal systems have been investigated by Bari \cite{bari}
which is a setting convenient for our constructions; see also
Gelfand \cite{Gelfand:on-Bari}. Similar
(slightly more general but essentially the same)
systems are also called `Hilbert systems' or `quasi-orthogonal systems' by
Bari \cite{bari} and Kac, Salem and Zygmund \cite{Kac-Salem-Zygmund}, respectively.

We then investigate the associated spaces of test
functions, distributions, `convolutions', Fourier transforms,
Sobolev spaces $H^s_{\rm L}(\Omega)$ and $l^p({\rm L})$ spaces on the `dual',
associated to L, and their properties such as the Hausdorff-Young inequality,
interpolation, and duality. A strong characteristic feature of this analysis is that it
is build upon biorthogonal systems rather than more familiar orthonormal bases.
Consequently, we introduce
difference operators acting on Fourier coefficients, and
the subsequent symbolic calculus of
pseudo-differential operators generated by a differential
operator ${\rm L}$. A formula for compositions of
pseudo-differential operators and other elements of the symbolic calculus
are obtained. It is shown that
pseudo-differential operators are bounded on $L^2$ under certain
conditions on their symbols. We also analyse ellipticity and a-priori
estimates for operators within this calculus.

\medskip
The exponential systems $\{e^{2\pi i\lambda x}\}_{\lambda\in\Lambda}$ on $L^2(0,1)$
for a discrete set $\Lambda$ possibly containing $\lambda\not\in \mathbb Z$ have been
considered by Paley and Wiener \cite{Paley-Wiener:book-1934} who called
such systems the {\em nonharmonic Fourier series} to emphasize the distinction with the
usual (harmonic) Fourier series when $\Lambda=\mathbb Z$. For further explanations and
developments of the nonharmonic analysis we refer to survey papers by Sedletskii
\cite{Sedletskii-1,Sedletskii-2} (see also an earlier survey \cite{Sedletskii-UMN}).
The difference between the harmonic and nonharmonic Fourier series in our context
is already exhibited by the operator ${\rm L}=-i\frac{d}{dx}$ in the
space $L^{2}(0,1),$ but with boundary conditions $h y(0)=y(1)$ for a fixed
$h>0$. In this case, the series of eigenfunctions (a building block for our analysis)
is `harmonic' for $h=1$ and `nonharmonic' for $h\not=1$.
In Example \ref{Example1} we explain this further and also complement it with a
number of explicit formulae.

From this point of view, the analysis of pseudo-differential operators on the torus
using the classical exponential bases as in \cite{Ruzhansky-Turunen-JFAA-torus},
or further extensions using representation coefficients on compact Lie groups
as in \cite{RT,Ruzhansky-Turunen:IMRN}, both fall within the realm of
`harmonic' analysis. The latter approach has further, still `harmonic' extensions,
for example for the global analysis of pseudo-differential operators on
the Heisenberg group \cite{Fischer-Ruzhansky:CRAS-Heisenberg},
graded Lie groups 
\cite{Fischer-Ruzhansky:CRAS-lower-bounds,Fischer-Ruzhansky:TM-2014,
Fischer-Ruzhansky:book},
or general type I locally compact groups \cite{Mantoiu-Ruzhansky:type-I}.

In the analysis of the present paper such symmetries are
in general lost, nevertheless we attempt to still mimic the harmonic analysis
constructions but in the new `nonharmonic' setting. Therefore, to also emphasize
such a difference, we may call our analysis the `nonharmonic analysis of
boundary value problems'. In spirit, this is similar to the global pseudo-differential
analysis on closed manifolds as in
\cite{Delgado-Ruzhansky:invariant, Delgado-Ruzhansky:CRAS}
partly based on the `nonharmonic' analysis on compact manifold by Seeley
\cite{see:ex,see:exp}. Such analysis becomes effective in a number of problems,
for example it was recently used in \cite{DR} to produce sharp kernel conditions
for Schatten classes of operators on compact manifolds, and in \cite{Dasgupta-Ruzhansky:TAMS}
to give characterisations of Komatsu-type classes of functions and distributions, in particular
for classes of Gevrey functions and ultradistributions, on a compact manifold, extending the 
characterisation given for analytic functions by Seeley \cite{see:exp}.

The analysis of \cite{Delgado-Ruzhansky:invariant} deals with general compact
manifolds, but is simplified by the facts that there are no boundary conditions,
the operator L is self-adjoint, elliptic and positive, and the considered calculus is that of invariant
operators.

We keep the setting of this paper rather abstract, in particular not relying
on a specific form of boundary conditions of the operator for our analysis.
Certainly, if more information on the operator L and its properties are available,
more conclusions can be drawn. In Section \ref{SEC:L-examples} we give
several examples of operators and boundary conditions. In a somewhat related
setting, the global pseudo-differential analysis based on an elliptic
self-adjoint pseudo-differential operator on a closed manifold has been
recently developed in \cite{Delgado-Ruzhansky:invariant}.

Although in this paper we do not give explicit applications to partial differential equations,
these will appear elsewhere. For example, the analysis developed here could allow one
to treat classes of PDE problems in cylindrical domains of finite length without assuming periodic
boundary conditions on the top and bottom edges of the cylindrical domain, see
e.g. Denk and Nau \cite{Denk-Nau:cylindrical-Edi-2013} for this kind of problems.
Also, in subsequent works we will apply the pseudo-differential analysis developed here
to problems in punctured domains with $\delta$-type potentials, for PDE problems of
the type that appeared in \cite{KNT:Russian-Math-2015,KT:Doklady-2015}.

\vspace{3mm}

Let us formulate the main assumptions of this paper. We will consider
a differential operator ${\rm L}$ of order $m$ with smooth coefficients
on an open set $\Omega\subset\mathbb R^n$ equipped with some boundary
conditions. In order to describe the abstract scheme we will denote the
boundary conditions by (BC) without specifying them further in the general
framework. Concerning the boundary conditions we will assume that

\MyQuote{the boundary conditions {\rm (BC)} are {\em linear},
i.e. they are preserved under linear combinations or, in other words,
the spaces of functions satisfying {\rm (BC)} are linear.}{BC}

\vspace{-5mm}
In this paper we prefer to think of the operator in terms of its boundary conditions instead of domain,
in view of the planned further applications. However, in the paper we may use both points of view.

Later on, once introducing topologies on spaces of functions in the domain of ${\rm L}$, 
we will assume the condition (BC+) that the boundary conditions define a closed space.
In Section \ref{SEC:L-examples} we give different examples
of operators ${\rm L}$ and boundary conditions (BC). 

The assumption (BC) may be reformulated by saying that the domain ${\rm Dom(L)}$ of the operator
${\rm L}$ is linear, and the condition (BC+) by saying that ${\rm Dom(L)}$ and ${\rm Dom}({\rm L}^*)$
are closed in the topologies of $C_{\rm L}^\infty(\overline{\Omega})$ and  $C_{{\rm L}^*}^\infty(\overline{\Omega})$, 
respectively, with the latter spaces and their topologies introduced in Definition \ref{TestFunSp}.

\medskip
Also, we will be working with discrete sets of eigenvalues and eigenfunctions indexed by 
a countable set $\ind$.
However, in different problems it may be more convenient to make different
choices for this set, e.g. $\ind=\mathbb N$ or $\mathbb Z$  or $\mathbb Z^k$, etc.
In order to allow different applications we will be denoting it by $\ind$,
and without loss of generality we
will assume that 
\begin{equation}\label{EQ:ind}
\ind \textrm{ is a subset of } \mathbb Z^{K} \textrm{ for some } K\geq 1.
\end{equation}
For simplicity, one can think of $\ind=\mathbb Z$ or $\ind=\mathbb N\cup\{0\}$
throughout this paper.
Thus, throughout this paper we will be always working in the following setting:

\begin{assump}
\label{Assumption_1}
Let $\Omega\subset \mathbb R^{n}$, $n\geq 1$, be a bounded open set.
By ${\rm L}_\Omega$ we denote a differential operator ${\rm L}$ of
order $m$ with smooth coefficients in $\Omega$, equipped with some linear boundary
conditions {\rm (BC)}.
Assume that ${\rm L}_\Omega$ has a discrete spectrum
$\{\lambda_{\xi}\in\mathbb C: \, \xi\in\ind\}$
on $L^{2}(\Omega)$, where $\ind$ is a countable set as in \eqref{EQ:ind},
and we order the eigenvalues with
the occurring multiplicities in the ascending order:
\begin{equation}\label{EQ: EVOrder}
 |\lambda_{j}|\leq|\lambda_{k}| \quad\textrm{ for } |j|\leq |k|.
\end{equation}
\end{assump}


Let us denote by $u_{\xi}$  the eigenfunction of ${\rm L}_{\Omega}$ corresponding to the
eigenvalue $\lambda_{\xi}$ for each $\xi\in\ind$, so that
\begin{equation}
\label{SpecPr} {\rm L}u_{\xi}=\lambda_{\xi}u_{\xi} \,\,\,\,\,\,
\textrm{ in }  \Omega,\quad \textrm{ for all } \xi\in\ind.
\end{equation}
Here the eigenfunctions $u_\xi$ satisfy the boundary conditions
(BC) discussed earlier. The conjugate spectral problem is
\begin{equation}
\label{ConjSpecPr} {\rm
L^{\ast}}v_{\xi}=\overline{\lambda}_{\xi}v_{\xi}\,\,\,\,\,\,
\textrm{ in } \Omega\quad \textrm{ for all } \xi\in\ind,
\end{equation}
which we equip with the conjugate boundary conditions which we may denote by (BC)$^*$.
This adjoint problem will be denoted by ${\rm L}_\Omega^*$.

Let $\|u_{\xi}\|_{L^{2}}=1$ and $\|v_{\xi}\|_{L^{2}}=1$ for all
$\xi\in\ind.$ Here, we can take biorthogonal systems
$\{u_{\xi}\}_{\xi\in\ind}$ and $\{v_{\xi}\}_{\xi\in\ind}$,
i.e.
\begin{equation}\label{BiorthProp}
(u_{\xi},v_{\eta})_{L^2}=0 \,\,\,\, \hbox{for}
\,\,\,\, \xi\neq\eta, \,\,\,\, \hbox{and} \,\,\,\,
(u_{\xi},v_{\eta})_{L^2}=1 \,\,\,\, \hbox{for} \,\,\,\, \xi=\eta,
\end{equation}
where
$$(f, g)_{L^{2}}:=\int_{\Omega}f(x)\overline{g(x)}dx$$ is the usual
inner product of the
Hilbert space $L^{2}(\Omega)$. From N.K. Bari's work
\cite{bari} it follows that the system $\{u_{\xi}: \,\,\,
\xi\in\ind\}$ is a basis in $L^{2}(\Omega)$ if and only if the
system $\{v_{\xi}: \,\,\, \xi\in\ind\}$ is a basis in
$L^{2}(\Omega)$.
So, from now on we will also assume:

\begin{assump}
\label{Assumption_3}
The system
$\{u_{\xi}: \; \xi\in\ind\}$ is a basis in $L^{2}(\Omega)$, i.e.
for every $f\in L^{2}(\Omega)$ there exists a unique series
$\sum_{\xi\in\ind} a_\xi u_\xi(x)$ that converges to $f$ in  $L^{2}(\Omega)$.
\end{assump}

Therefore, by Bari \cite{bari}, the system $\{v_{\xi}: \,\,\, \xi\in\ind\}$ is also
a basis in $L^{2}(\Omega)$.
Also, Assumption \ref{Assumption_3}  will imply that the spaces
$C^\infty_{{\rm L}}(\overline\Omega)$ and $C^\infty_{{\rm L}^*}(\overline\Omega)$
of test functions introduced in Section \ref{SEC:TD}
are dense in $L^2(\Omega)$.

\medskip
Let us define the weight
\begin{equation}\label{EQ:angle}
\langle\xi\rangle:=(1+|\lambda_{\xi}|^2)^{\frac{1}{2m}},
\end{equation}
which will be instrumental in measuring the growth/decay of Fourier coefficients and of
symbols.
To give its interpretation in terms of the operator analysis, we can define the operator
${\rm L}^{\circ}$ by setting its values on the basis $u_{\xi}$ by
\begin{equation}\label{EQ:Lo-def}
{\rm L}^{\circ} u_{\xi}:=\overline{\lambda_{\xi}} u_{\xi},\quad
\textrm{ for all } \xi\in\ind.
\end{equation}
If L is self-adjoint, we have ${\rm L}^{\circ}={\rm L}^*={\rm L}$.
Consequently, we can informally think of $\langle\xi\rangle$
as of the eigenvalues of the positive (first order) operator
$({\rm I}+{\rm L^\circ\, L})^{\frac{1}{2m}}.$

With a similar definition for $({\rm L}^{*})^{\circ}$, we can observe that
$({\rm L}^{*})^{\circ}=({\rm L}^{\circ})^{*}$.



\medskip
The following technical definition will be useful to single out the case when the eigenfunctions
of both L and ${\rm L}^*$ do not have zeros (WZ stands for `without zeros'):

\begin{defn}
\label{DEF: WZ-system}
The system $\{u_{\xi}: \,\,\, \xi\in\ind\}$ is called a ${\rm WZ}$-system if the functions $u_{\xi}(x), \,
v_{\xi}(x)$ do not have zeros on the domain $\overline{\Omega}$
for all $\xi\in\ind$, and if there exist $C>0$
and $N\geq0$ such that
$$
\inf\limits_{x\in\Omega}|u_{\xi}(x)|\geq
C\langle\xi\rangle^{-N},
$$
$$
\inf\limits_{x\in\Omega}|v_{\xi}(x)|\geq
C\langle\xi\rangle^{-N},
$$
as $|\xi|\to\infty$.
\end{defn}
Here WZ stands for `without zeros'. We note that, in particular, a WZ-system can not be all
real-valued due to orthogonality relations \eqref{BiorthProp}.
Several examples of WZ-systems will be given in Section \ref{SEC:L-examples},
but a typical example is the system $\{e^{2\pi i \lambda x}\}_{\lambda\in\mathbb Z}$ for
${\rm L}=-i\frac{d}{dx}$ on the circle $\mathbb T=\mathbb R/\mathbb Z$.

In the sequel, unless stated otherwise, whenever we will use inverses $u_{\xi}^{-1}$
of the functions $u_{\xi}$,
we will suppose that the system $\{u_{\xi}: \; \xi\in\ind\}$ is a ${\rm WZ}$-system.
However, we will also try to mention explicitly when we make such an additional assumption.

\medskip
The paper is organised as follows.
\begin{itemize}
\item Section \ref{SEC:L-examples}: we give examples of
operators L and of different boundary conditions yielding different types of biorthogonal
systems.
\item Section \ref{SEC:TD}: we introduce elements of the global
theory of distributions $\mathcal D'_{\rm L}(\Omega)$
in $\Omega$ adapted to the boundary value problem ${\rm L}_\Omega$.
\item Section \ref{SEC:FT}: we develop the Fourier transform induced by L, which is the decomposition
of elements of $\mathcal D'_{\rm L}(\Omega)$ with respect to the eigenfunctions of L. Here is a point
when both biorthogonal bases $u_\xi$ and $v_\xi$ came actively into play.
\item Section \ref{SEC:conv}: we introduce L-convolution $\sL$, which is an operation resembling the usual
convolution. Despite the lack of any symmetries in our problem a
number of useful properties of such L-convolution can still be
obtained.
\item Section \ref{SEC:Sobolev}: we introduce the space
$l^2_{\rm L}$ for which the Plancherel identity for the L-Fourier transform holds. Consequently, we
introduce Sobolev spaces $\mathcal H^{s}_{{\rm L}}(\Omega)$ and describe their Fourier images.
\item Section \ref{SEC:lp}: we introduce the spaces $l^{p}({\rm L})$ and $l^{p}({\rm L}^*)$ extending
the spaces $l^2_{\rm L}$ and $l^2_{{\rm L}^*}$ to the $l^p$-setting. We show that these spaces
are interpolation spaces and satisfy the expected duality properties. Moreover, we obtain the
Hausdorff-Young inequality for the L-Fourier transform in these spaces.
\item Section \ref{SEC:Schwartz}: we prove the Schwartz kernel theorem in the distribution spaces
$\mathcal D'_{\rm L}(\Omega)$. This is necessary to set up the subsequent framework of the symbolic
analysis and of the definition of the symbol as the L-Fourier transform of the L-convolution kernel.
\item Section \ref{SEC:quantization}: we introduce difference operators acting on Fourier coefficients
and on symbols. Keeping in mind ideas from the Calder\'on-Zygmund theory, these are defined as
multiplications on the inverse Fourier transform side by functions vanishing at an anticipated singular
support of the integral kernel. Due to the lack of symmetries (as compared e.g. to the cases of the
torus or of compact Lie groups) these difference operators also depend on the points $x$ of the space.
\item Section \ref{SEC:differences}: the notion of difference operators
is used to define H\"ormander type classes
induced by the boundary value problem ${\rm L}_\Omega$ and to develop elements of its
symbolic calculus.
\item Section \ref{SEC:kernels}: we derive some properties of the integral kernels of pseudo-differential operators.
\item Section \ref{SEC:elliptic}: we show that operators that are elliptic in the
constructed symbol classes have both left and right parametrices and provide a formula for it.
\item Section \ref{SEC:embeddings}: we discuss possible Sobolev embedding theorems. In particular,
it seems that in order to have a meaningful collection of embeddings further assumptions on the boundary
value problem ${\rm L}_\Omega$ may be needed.
\item Section \ref{SEC:L2}: we prove a criterion for the $L^2$-boundedness of pseudo-differential operators
in terms of their symbols, and extend it to Sobolev spaces as well. An application is given to
obtain a-priori estimates for solutions to boundary value problems to elliptic operators.
\end{itemize}

Most results, especially those starting from Section \ref{SEC:conv} appear to be new already in the case when
the problem ${\rm L}_\Omega$ is self-adjoint.

\smallskip
The authors would like to thank Julio Delgado for discussions. Applications of the approach of this paper
to Schatten classes and nuclearity properties as well as estimates on eigenvalue asymptotics for boundary value problems
will appear in \cite{Delgado-Ruzhansky-Togmagambetov:nuclear}.

\section{Examples of operators ${\rm L}$ and boundary conditions}
\label{SEC:L-examples}

In this section we give several examples of the operator ${\rm L}$ and of boundary conditions (BC).
The following example shows that among other things, we can extend to the
non-self-adjoint setting the toroidal calculus (with periodic boundary conditions) developed
in \cite{Ruzhansky-Turunen-JFAA-torus}.

\begin{example}\label{Example1}
For $h>0$, let the operator ${\rm O}_{h}^{(1)}$ be given by the expression
$$
{\rm O}_{h}^{(1)}:= -i\frac{\partial}{\partial x}
$$
on the domain $\Omega=(0, 1)$ with the boundary condition
$$
h y(0)=y(1).
$$
In the case $h=1$ we get the operator ${\rm O}_{1}^{(1)}$ with
periodic boundary conditions. In this case
the ${\rm O}_{1}^{(1)}$-pseudo-differential calculus developed in this
paper coincides with the toroidal calculus some aspects of which were
investigated in the works by
Agranovich \cite{agran, agran2}, Turunen and Vainikko
\cite{TurunenVainikko}, and which was then consistently developed
by Ruzhansky and Turunen in \cite{Ruzhansky-Turunen-JFAA-torus}. Thus, of main interest to us
here will be the calculus generated by ${\rm O}_{h}^{(1)}$ with $h\not=1$.

It is easy to check that for $h\not =1$ the operator ${\rm O}_{h}^{(1)}$ is not
self-adjoint. Spectral properties of the operator ${\rm
O}_{h}^{(1)}$ are well-known (see Titchmarsh \cite{titc} and Cartwright \cite{cart}): with $\ind=\mathbb Z$,

\vspace{2mm}

{\bf A.} ${\rm O}_{h}^{(1)}$ has a discrete spectrum and its eigenvalues satisfy
$$
\lambda_{j}=-i\ln h+2j\pi, \ j\in \mathbb{Z}.
$$

\vspace{2mm}

{\bf B.} The system of eigenfunctions
$$
\{u_{j}(x)=h^{x}e^{ 2\pi i x j },\,\, j\in \mathbb{Z}\}
$$
of the operator ${\rm O}_{h}^{(1)}$ is a minimal system in the
space $L^{2}(\Omega),$ and the biorthogonal system to
$\{u_{j}(x)=h^{x}e^{ 2\pi i x j  },\,\, j\in \mathbb{Z}\}$ in
$L^{2}(\Omega)$ is
$$
\{v_{j}(x)=h^{-x} e^{2\pi i x j },\,\, j\in \mathbb{Z}\}.
$$

\vspace{2mm}

{\bf C.} The system of eigenfunctions of the operator ${\rm
O}_{h}^{(1)}$ is a Riesz basis in $L^{2}(\Omega).$
These families also form WZ-systems (without zeros, as
in Definition \ref{DEF: WZ-system}).

\vspace{2mm}

{\bf D.} The resolvent of the operator ${\rm O}_{h}^{(1)}$ is
$$
({\rm O}_{h}^{(1)}-\lambda I)^{-1}f(x)=
\frac{i}{\Delta(\lambda)}e^{i\lambda(x+1)}\int_{0}^{1}e^{-i\lambda
t}f(t)dt
+i e^{i\lambda x}\int_{0}^{x} e^{-i\lambda t}f(t)dt,
$$
where
$$
\Delta(\lambda)=h-e^{i\lambda}.
$$

\vspace{5mm}

The above example fits into our framework once we index the family of eigenvalues and
of the corresponding eigenfunctions by $\ind=\mathbb Z$ which is a choice we made for the (discrete)
index set. In Section \ref{SEC:conv} we will discuss convolutions generated by our
operators ${\rm L}_\Omega$. In this example,
the convolution generated by the operator ${\rm O}_{h}^{(1)}$ has
the following explicit form
$$
(g\star_{{\rm O}_{h}^{(1)}}
f)(x)=\int^{x}_{0}g(x-t)f(t)dt+\frac{1}{h}\int^{1}_{x}g(1+x-t)f(t)dt.
$$
For more details on this particular convolution see \cite{Kanguzhin_Tokmagambetov} and
\cite{Kanguzhin_Tokmagambetov_Tulenov}.
\end{example}

\begin{rem}\label{REM:toroidal}
The toroidal pseudo-differential calculus on all higher dimensional tori $\mathbb T^n$, $n\geq 1$,
as outlined in \cite{RT07} and then consistently
developed in \cite{Ruzhansky-Turunen-JFAA-torus}, can not be covered by the first order differential operator
${\rm O}_{1}^{(1)}$. But in this case we can take the second order operator.
Namely, identifying the torus $\mathbb T^n$ with the cube $[0,1]^n$ with periodic
boundary conditions,
we can take
$
{\rm L}:= \Delta
$
to be the Laplacian with the periodic boundary conditions
on the boundary of  $\Omega=(0,1)^n$. See Example \ref{example-Rn} for further details.
\end{rem}

\begin{example}\label{EX:sigma}
The operator ${\rm L}=i\frac{d}{dt}$ with $\Omega=(-a,a)$ and the boundary condition
$$
\int_{-a}^a y(t) d\sigma(t)=0,\quad {\rm var}\, \sigma(t)<\infty,\quad 0<a<\infty,
$$
has the eigenfunctions in the form $\{\exp(i\lambda_k t)\}_{\lambda_k\in\Lambda},$
where $\Lambda\subset\mathbb C$ is the collection of zeros of the Fourier transform
$\widehat{d\sigma}$ of the measure $d\sigma(t).$
It becomes a biorthogonal system or a Riesz basis under a number of properties of
$\Lambda$, see Sedletskii \cite{Sedletskii-1} for a thorough review of this topic,
see also \cite{Sedletskii-2}.
\end{example}

\begin{example}\label{EX:sigma2}
Combining Example \ref{Example1} and Example \ref{EX:sigma}, we can consider
operator ${\rm L}=-i\frac{d}{dx}$ with $\Omega=(0,1)$ with the domain
$$
{\rm Dom}({\rm L})=\left\{y\in W_2^1[0,1]: ay(0)+by(1)+\int_{0}^1 y(x) q(x) dx=0\right\},
$$
where $a\not=0$, $b\not=0$, and $q\in C^1[0,1]$.
We assume that $$a+b+\int_0^1 q(x) dx=1$$ so that the inverse ${\rm L}^{-1}$ exists and is bounded.
Following \cite{Kanguzhin-Nurahmetov:Kaz-2002}, we have the following properties, with $\ind=\mathbb Z$:

\vspace{2mm}

{\bf A.} The operator ${\rm L}$ has a discrete spectrum and its eigenvalues can be enumerated so that
$$
\lambda_{j}=-i\ln (-\frac{a}{b})+2j\pi+\alpha_j, \ j\in \mathbb{Z},
$$
and for any $\epsilon>0$ we have $$\sum_{j\in\mathbb Z} |\alpha_j|^{1+\epsilon}<\infty.$$
If $m_j$ denotes the multiplicity of the eigenvalue $\lambda_j$, then $m_j=1$ for sufficiently large $j$.
\vspace{2mm}

{\bf B.} The system of extended eigenfunctions
\begin{equation}\label{EQ:exev}
\left\{u_{jk}(x)=\frac{(ix)^k}{k!} e^{ i \lambda_j x }:\quad 0\leq k\leq m_j-1,\; j\in \mathbb{Z}\right\}
\end{equation}
of the operator ${\rm L}$ is a minimal system in the
space $L^{2}(0,1),$ and its biorthogonal system is given by
$$
v_{jk}(x)=\lim_{\lambda\to\lambda_j} \frac{1}{k!} \frac{d^k}{d\lambda^k}
\left(\frac{(\lambda-\lambda_j)^{m_j}}{\Delta(\lambda)}
(ibe^{i\lambda(1-x)}+i\int_x^1 e^{i\lambda(t-x)}q(t) dt)
\right),
$$
$0\leq k\leq m_j-1,\;  j\in \mathbb{Z}$, where
$$\Delta(\lambda)=a+b e^{i\lambda}+\int_0^1  e^{i\lambda x} q(x) dx.$$
The eigenvalues of ${\rm L}$ are determined by the equation $\Delta(\lambda)=0.$
\vspace{2mm}

{\bf C.} The system
$\{u_{jk}\}_{0\leq k\leq m_j-1,\; j\in \mathbb{Z}}$
of extended eigenfunctions \eqref{EQ:exev}
of the operator ${\rm L}$ is a Riesz basis in $L^{2}(0,1).$
Any $f\in {\rm Dom}({\rm L})$ has a decomposition in a uniformly convergent
series of functions in \eqref{EQ:exev}.
Moreover, the eigenfunctions $e^{i\lambda_j x}$ satisfy
\begin{equation}\label{EQ:exl2}
\sum_{j\in\mathbb Z} \|e^{i\lambda_j x}-e^{i 2\pi j x}\|^2_{L^2(0,1)}<\infty.
\end{equation}
In particular, this implies that modulo finitely many elements, the system
\eqref{EQ:exev} is a WZ-system (without zeros, as
in Definition \ref{DEF: WZ-system}).
\end{example}

\begin{example}\label{example-Rn}
We now consider operator ${\rm L}={\rm O}_h^{(n)}$, the analogue of 
Example \ref{Example1} in higher dimensions. 
Let 
$$\Omega:=(0,1)^n \textrm{ and } h>0 \textrm{ i.e. } h=(h_1,\dots,h_n)\in\mathbb R^{n} : h_j>0 \mbox{ for every } j=1,\dots,n.$$ 
The operator ${\rm O}_h^{(n)}$ on $\Omega$ is defined by the differential operator 
\begin{equation}\label{lae1} 
{\rm O}_h^{(n)}:=\Delta=\sum_{j=1}^{n}\frac{\partial^2}{\partial x_j^2},
\end{equation}
together with the boundary conditions (BC):
\begin{equation}\label{EQ:BCh}
h_{j} f(x)|_{x_{j}=0}=f(x)|_{x_{j}=1},\quad
h_{j} \frac{\partial f}{\partial x_j}(x)|_{x_{j}=0}=\frac{\partial f}{\partial x_j}(x)|_{x_{j}=1},
\quad j=1,\ldots,n,
\end{equation}
and the domain
\[{\rm Dom}({{\rm O}_h^{(n)}})=\{f\in L^{2}(\Omega) : \Delta f\in L^{2}(\Omega):\;
f \textrm{ satisfies \eqref{EQ:BCh}} \}.\]

In order to describe the corresponding biorthogonal system, we first note that since $b^0=1$ for all $b>0$, we can define $0^0=1$. In particular
 we write 
 $$h^x=h_1^{x_1}\cdots h_n^{x_n}=\prod\limits_{j=1}^{n}h_{j}^{x_{j}}$$ 
 for $x\in [0,1]^n$. 
Then, with $\ind=\mathbb{Z}^{n}$,
the system of
eigenfunctions of the operator $L_h$ is
$$\{u_{\xi}(x)=h^{x}e^{2\pi i x\cdot\xi}, \;\xi\in
\mathbb{Z}^{n}\}$$ 
and the conjugate system is
$$\{v_{\xi}(x)=h^{-x}e^{ 2\pi i x\cdot\xi },\; \xi\in
\mathbb{Z}^{n}\},$$ 
where $x\cdot\xi =x_{1}\xi_{1}+ \ldots + x_{n}\xi_{n}$. Note that
$u_{\xi}(x)=\prod\limits_{j=1}^{n}u_{\xi_{j}}(x_{j}),$ where
$u_{\xi_{j}}(x_{j})=h_{j}^{x_{j}}e^{{ 2\pi i x_{j}\xi_{j} }}$.
\end{example}

\begin{example}
Various sine and cosine systems appear as biorthogonal systems as well. One interesting
example is the collection of
$$
\sin(k-1/4)t, \quad k\in\mathbb N,
$$
which appears as a system of
eigenfunctions of the Sturm-Liouville problem after the separation of variables in the
Lavrent'ev-Bicadze equation with special boundary conditions, see
Ponomarev \cite{Ponomarev-DAN-1979}.
Shkalikov \cite{Shkalikov-sins}
showed that this system yields a Riesz basis in $L^2(0,\pi).$
See also Sedletskii \cite[p. 146]{Sedletskii-1} for more perspective on this problem.
\end{example}

\begin{example}
Let ${\rm O}^{(m)}$ be an ordinary differential operator in
$L^{2}(0, 1)$ of order $m$ generated by the
differential expression
\begin{equation}
l(y)\equiv y^{(m)}(x)+\sum_{k=0}^{m-1}p_{k}(x)y^{(k)}(x), \quad
0<x<1, \label{EQ1A}
\end{equation}
with coefficients
$$
p_{k}\in C^{k}[0,1], \,\,\, k=0,1,\ldots,m-1,
$$
and boundary conditions
\begin{equation}
U_{j}(y)\equiv
V_{j}(y)+\sum\limits_{s=0}^{k_{j}}\int\limits_{0}^{1}y^{(s)}(t)\rho_{js}(t)dt=0,
\quad  j=1, \ldots, m, \label{EQ2A}
\end{equation}
where
$$
V_{j}(y)\equiv
\sum\limits_{s=0}^{j}[\alpha_{js}y^{(k_{s})}(0)+\beta_{js}y^{(k_{s})}(1)],
$$
with $\alpha_{js}$ and $\beta_{js}$ some real numbers, and $\rho_{js}\in
L^{2}(0, 1)$ for all $j$ and $s$.

Furthermore, we suppose that the boundary conditions (\ref{EQ2A})
are normed and strong regular in the sense considered by Shkalikov in \cite{Shkalikov}.
Then it can be shown that the eigenvalues have the same algebraic and
geometric multiplicities and, after a suitable adaption for our case, we have

\begin{theorem}[\cite{Shkalikov}]
The eigenfunctions of the operator ${\rm O}^{(m)}$ with strong regular
boundary conditions (\ref{EQ2A}) form a Riesz basis in $L^{2}(0, 1)$.
\label{TH: Shkalikov}
\end{theorem}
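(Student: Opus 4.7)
The plan is to follow the classical approach of Shkalikov, which proceeds by constructing asymptotics of a fundamental system of solutions and then invoking Bari's criterion for Riesz bases.

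First, for $\lambda\neq 0$ set $\rho=\lambda^{1/m}$ (with a fixed branch) and note that the unperturbed equation $y^{(m)}=\lambda y$ has fundamental solutions $\exp(\omega_k\rho x)$ where $\omega_1,\dots,\omega_m$ are the $m$-th roots of unity. By successive approximation (Birkhoff's method), I would construct a fundamental system $y_1(x,\lambda),\dots,y_m(x,\lambda)$ of $l(y)=\lambda y$ with uniform asymptotics
\[
y_k^{(j)}(x,\lambda)=(\omega_k\rho)^{j}e^{\omega_k\rho x}\bigl(1+O(\rho^{-1})\bigr),\qquad j=0,\dots,m-1,
\]
valid in each closed Stokes sector of the $\rho$-plane. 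The smoothness $p_k\in C^k[0,1]$ is exactly what is needed for this expansion.

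Next I would form the characteristic determinant $\Delta(\lambda)=\det\bigl(U_j(y_k)\bigr)_{j,k=1}^{m}$ whose zeros are the eigenvalues. The integral terms $\int_0^1 y^{(s)}(t)\rho_{js}(t)\,dt$ in the boundary conditions \eqref{EQ2A}, with $\rho_{js}\in L^2$, contribute only lower-order terms to $U_j(y_k)$ (by the Riemann-Lebesgue lemma applied to the oscillatory factors $e^{\omega_k\rho t}$), so the leading asymptotics of $\Delta(\lambda)$ are the same as for the purely local boundary conditions $V_j(y)=0$. Normed strong regularity is, by definition, the condition that the coefficient of the leading exponential in $\Delta$ is non-zero in every relevant sector; this yields (a) the eigenvalues are simple for $|\lambda|$ large and lie in a fixed horizontal strip, (b) they satisfy $\lambda_n=(2\pi n)^m+O(1)$ (up to a shift depending on the $\alpha_{js},\beta_{js}$), and (c) the resolvent $R_\lambda=(\mathrm{O}^{(m)}-\lambda I)^{-1}$ admits the Green-function representation and satisfies $\|R_\lambda\|_{L^2\to L^2}=O(|\lambda|^{-1})$ along a suitably chosen sequence of expanding contours $\Gamma_N$ bypassing the spectrum.

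Contour integration $\tfrac{1}{2\pi i}\oint_{\Gamma_N}R_\lambda f\,d\lambda$ then expresses the $N$-th partial sum of the biorthogonal expansion of $f\in L^2(0,1)$ in the eigenfunctions $\{u_n\}$; the resolvent bound gives completeness and minimality. For the Riesz basis property I would then verify quadratic closeness to an orthonormal basis: plugging the asymptotics of $y_k$ into the $m\times m$ system that determines the eigenfunction $u_n$ and normalising in $L^2$, one obtains
\[
u_n(x)=\varphi_n(x)+r_n(x),\qquad \sum_n\|r_n\|_{L^2(0,1)}^{2}<\infty,
\]
where $\{\varphi_n\}$ is an orthonormal basis of trigonometric type. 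Bari's theorem (already cited as \cite{bari} in the paper) then upgrades the basis $\{u_n\}$ to a Riesz basis.

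The main obstacle is the uniform asymptotic analysis of the $y_k(x,\lambda)$ together with control of $\Delta(\lambda)^{-1}$ on $\Gamma_N$; once strong regularity is used to rule out vanishing of the leading determinantal symbol, the rest is a perturbative argument. The non-local integral terms in \eqref{EQ2A} are harmless because $\rho_{js}\in L^2$ guarantees they are strictly subleading with respect to the boundary values $V_j$.
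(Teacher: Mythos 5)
There is nothing in the paper to compare your argument against: Theorem \ref{TH: Shkalikov} is stated as an imported result, attributed to \cite{Shkalikov}, and the authors give no proof (the surrounding text only says that the conditions \eqref{EQ2A} are assumed normed and strongly regular ``in the sense considered by Shkalikov''). So your proposal should be judged on its own as a reconstruction of Shkalikov's argument. As an outline it follows the standard Birkhoff--Naimark--Mikhailov--Kesel'man--Shkalikov route (fundamental-system asymptotics, characteristic determinant, strong regularity forcing separation and asymptotic simplicity of the eigenvalues, quadratic closeness to a trigonometric system, Bari's theorem), and that is indeed the right strategy; the hypotheses $p_k\in C^k[0,1]$ and $\rho_{js}\in L^2(0,1)$ enter exactly where you say they do.

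Two places where the sketch asserts rather than proves the actual content of the theorem. First, the crux is the quadratic closeness $\sum_n\|r_n\|_{L^2}^2<\infty$, and the Riemann--Lebesgue lemma you invoke only yields $o(1)$ decay of the oscillatory integrals $\int_0^1 e^{\omega_k\rho_n t}\rho_{js}(t)\,dt$, which is not enough for square-summability. The mechanism that makes $\rho_{js}\in L^2$ the correct hypothesis is Bessel's inequality: along the eigenvalue sequence these integrals are (up to bounded factors) Fourier coefficients of $L^2$ functions with respect to an exponential system lying in a horizontal strip, hence $\ell^2$-summable; this has to be combined with the $O(\rho_n^{-1})$ Birkhoff errors coming from the $p_k$. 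Second, strong regularity gives simplicity of the eigenvalues only for $|\lambda|$ large; finitely many eigenvalues may be multiple, so the system must be completed by root (associated) functions, and one needs completeness and minimality of the whole root system --- obtained from the resolvent estimate on the contours $\Gamma_N$ --- before Bari's theorem applies; a finite modification then does not destroy the Riesz basis property. Note that the paper itself flags this point by remarking that for \eqref{EQ2A} the algebraic and geometric multiplicities coincide. With these two steps filled in, your plan is the proof.
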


In the monograph of Naimark \cite{Naimark} the spectral
properties of differential operators generated by the differential
expression (\ref{EQ1A}) with the boundary conditions (\ref{EQ2A})
without integral terms were considered. The statement as in Theorem
\ref{TH: Shkalikov} was established in this setting, with the asymptotic formula
for the Weyl eigenvalue counting function $N(\lambda)$ in the form
\begin{equation}
\label{EQ: DistrAsymp_4} N(\lambda)\sim C \lambda^{1/m} \,\,\,\,
\hbox{as} \,\,\,\, \lambda\rightarrow+\infty.
\end{equation}
\end{example}

\begin{example}
Let ${\rm E}_{s}$ be a realisation in $L^{2}(\Omega)$ of a
regular elliptic boundary value problem, i.e. such that
the underlying differential
operator is uniformly elliptic and has smooth coefficients on
an open bounded set $\Omega\subset \mathbb R^{n}$, and that the
boundary conditions determining ${\rm E}_{s}$ are also regular in
some sense. Suppose that ${\rm E}_{s}$ is a self-adjoint elliptic
operator, so that ${\rm E}_{s}$ has a basis of eigenfunctions
in $L^{2}(\Omega)$.

The earliest results on the
asymptotic form of the eigenvalue counting function $N(\lambda)$ were obtained in
1911 by Weyl
\cite{Weyl} for the case of the negative Laplacian $-\Delta$ in
two dimensions. Using the theory of integral equations, Weyl
derived the formula
\begin{equation}
\label{EQ: DistrAsymp_1} N(\lambda)\sim
\frac{\mu_{2}(\Omega)}{4\pi} \lambda \,\,\,\, \hbox{as} \,\,\,\,
\lambda\rightarrow+\infty,
\end{equation}
where $\mu_{2}(\Omega)$ denotes the area of $\Omega$. In three
dimensions, this becomes
\begin{equation}
\label{EQ: DistrAsymp_2} N(\lambda)\sim
\frac{\mu_{3}(\Omega)}{6\pi^{2}} \lambda^{3/2} \,\,\,\, \hbox{as}
\,\,\,\, \lambda\rightarrow+\infty.
\end{equation}
The problem was then developed by Courant
\cite{Courant_1, Courant_2, Courant_3} and \cite{Courant_4}, who extended the
formulae of Weyl to further settings. In 1934, Carleman \cite{Carleman} introduced
Tauberian methods reminiscent of analytic number theory into the
study of Weyl asymptotic formulae. Using Carleman's results
Clark \cite{Clark} provided a rather general
asymptotic formula
\begin{equation}
\label{EQ: DistrAsymp_3} N(\lambda)\sim C \lambda^{n/m} \,\,\,\,
\hbox{as} \,\,\,\, \lambda\rightarrow+\infty,
\end{equation}
for the operator ${\rm E}_{s}$, where $m$ is order of ${\rm E}_{s}$.
The second term of the spectral asymptotic was obtained by Duistermaat and
Guillemin \cite{DG}. This has been extended further to elliptic systems, see the book
\cite{SV-book} by Safarov and Vassiliev for a survey, as well as to systems with
multiplicities, see Kamotski and Ruzhansky \cite{Kamotski-Ruzhansky:CPDE} and references therein.
\end{example}

\section{Global distributions generated by the boundary value problem}
\label{SEC:TD}

In this section we describe the spaces of distributions generated by the boundary value problem
${\rm L}_\Omega$ and by its adjoint ${\rm L}_\Omega^*$ and the related global Fourier analysis.
The more far-reaching aim of this analysis is to establish a version of the Schwartz kernel
theorem for the appearing spaces of distributions equipped with the corresponding boundary
conditions.
We first define the space $C_{{\rm L}}^{\infty}(\overline{\Omega})$ of test functions.

\begin{defn}\label{TestFunSp}
The space
$C_{{\rm L}}^{\infty}(\overline{\Omega}):={\rm Dom}({\rm L}_\Omega^{\infty})$ is called the
space of test functions for ${\rm L}_\Omega$. Here we define
$$
{\rm Dom}({\rm L}_\Omega^{\infty}):=\bigcap_{k=1}^{\infty}{\rm Dom}({\rm
L}_\Omega^{k}),
$$
where ${\rm Dom}({\rm L}_\Omega^{k})$, or just ${\rm Dom}({\rm L}^{k})$ for simplicity, is the domain of the
operator ${\rm L}^{k}$, in turn defined as
$$
{\rm Dom}({\rm L}^{k}):=\{f\in L^{2}(\Omega): \,\,\, {\rm
L}^{j}f\in {\rm Dom}({\rm L}), \,\,\, j=0, \,1, \, 2, \ldots,
k-1\}.
$$
We note that in this way all operators ${\rm L}^{k}$, $k\in\mathbb N$, are being equipped with the same boundary conditions (BC).
The Fr\'echet topology of $C_{{\rm L}}^{\infty}(\overline{\Omega})$ is given by the family of norms
\begin{equation}\label{EQ:L-top}
\|\varphi\|_{C^{k}_{{\rm L}}}:=\max_{j\leq k}
\|{\rm L}^{j}\varphi\|_{L^2(\Omega)}, \quad k\in\mathbb N_0,
\; \varphi\in C_{{\rm L}}^{\infty}(\overline{\Omega}).
\end{equation}

Analogously to the ${\rm L}$-case, we introduce the space $C_{{\rm
L^{\ast}}}^{\infty}(\overline{\Omega})$ corresponding to the adjoint operator ${\rm L}_\Omega^*$ by
$$
C_{{\rm L^{\ast}}}^{\infty}(\overline{\Omega}):=
{\rm Dom}(({\rm L^{\ast}})^{\infty})=\bigcap_{k=1}^{\infty}{\rm
Dom}(({\rm L^{\ast}})^{k}),
$$
where ${\rm Dom}(({\rm L^{\ast}})^{k})$ is the domain of the
operator $({\rm L^{\ast}})^{k}$,
$$
{\rm Dom}(({\rm L^{\ast}})^{k}):=\{f\in L^{2}(\Omega): \,\,\, ({\rm
L^{\ast}})^{j}f\in {\rm Dom}({\rm L^{\ast}}), \,\,\, j=0, \ldots, k-1\},
$$
which satisfy the adjoint boundary conditions corresponding to the operator
${\rm L}_\Omega^*$. The Fr\'echet topology of $C_{{\rm L}^*}^{\infty}(\overline{\Omega})$ is given by the family of norms
\begin{equation}\label{EQ:L-top-adj}
\|\psi\|_{C^{k}_{{\rm L}^*}}:=\max_{j\leq k}
\|({\rm L}^*)^{j}\psi\|_{L^2(\Omega)}, \quad k\in\mathbb N_0,
\; \psi\in C_{{\rm L}^*}^{\infty}(\overline{\Omega}).
\end{equation}

Since we have $u_\xi\in C^\infty_{{\rm L}}(\overline\Omega)$ and
$v_\xi\in C^\infty_{{\rm L}^*}(\overline\Omega)$ for all $\xi\in\ind$, we observe that
Assumption \ref{Assumption_3} implies that the spaces
$C^\infty_{{\rm L}}(\overline\Omega)$ and $C^\infty_{{\rm L}^*}(\overline\Omega)$
are dense in $L^2(\Omega)$.
\end{defn}

We note that if ${\rm L}_\Omega$ is self-adjoint, i.e. if ${\rm L}_\Omega^*={\rm L}_\Omega$
with the equality of domains, then
$C_{{\rm L^{\ast}}}^{\infty}(\overline{\Omega})=C_{{\rm L}}^{\infty}(\overline{\Omega}).$

In general, for functions $f\in C_{{\rm L}}^{\infty}(\overline{\Omega})$ and
$g\in C_{{\rm L}^*}^{\infty}(\overline{\Omega})$, the $L^2$-duality makes sense in view
of the formula
\begin{equation}\label{EQ:duality}
({\rm L}f, g)_{L^2(\Omega)}=(f,{\rm L}^*g)_{L^2(\Omega)}.
\end{equation}
Therefore, in view of the formula \eqref{EQ:duality},
it makes sense to define the distributions $\mathcal D'_{{\rm L}}(\Omega)$
as the space which is dual to $C_{{\rm L}^*}^{\infty}(\overline{\Omega})$.
Note that the the respective boundary conditions of ${\rm L}_\Omega$ and ${\rm L}_\Omega^*$
are satisfied by the choice of $f$ and $g$ in corresponding domains.

\begin{defn}\label{DistrSp}
The space $$\mathcal D'_{{\rm
L}}(\Omega):=\mathcal L(C_{{\rm L}^*}^{\infty}(\overline{\Omega}),
\mathbb C)$$ of linear continuous functionals on
$C_{{\rm L}^*}^{\infty}(\overline{\Omega})$ is called the space of
${\rm L}$-distributions.
We can understand the continuity here either in terms of the topology
\eqref{EQ:L-top-adj} or in terms of sequences, see
Proposition \ref{TH: UniBdd}.
For
$w\in\mathcal D'_{{\rm L}}(\Omega)$ and $\varphi\in C_{{\rm L}^*}^{\infty}(\overline{\Omega})$,
we shall write
$$
w(\varphi)=\langle w, \varphi\rangle.
$$
For any $\psi\in C_{{\rm L}}^{\infty}(\overline{\Omega})$,
$$
C_{{\rm L}^*}^{\infty}(\overline{\Omega})\ni \varphi\mapsto\int_{\Omega}{\psi(x)} \, \varphi(x)\, dx
$$
is an ${\rm L}$-distribution, which gives an embedding $\psi\in
C_{{\rm L}}^{\infty}(\overline{\Omega})\hookrightarrow\mathcal D'_{{\rm
L}}(\Omega)$.
We note that in the distributional notation formula \eqref{EQ:duality} becomes
\begin{equation}\label{EQ:duality-dist}
\langle{\rm L}\psi, \varphi\rangle=\langle \psi,\overline{{\rm L}^* \overline{\varphi}}\rangle.
\end{equation}
\end{defn}

With the topology on $C_{{\rm L}}^{\infty}(\overline{\Omega})$
defined by \eqref{EQ:L-top},
the space $$\mathcal
D'_{{\rm L^{\ast}}}(\Omega):=\mathcal L(C_{{\rm L}}^{\infty}(\overline{\Omega}), \mathbb C)$$
of linear continuous functionals on $C_{{\rm L}}^{\infty}(\overline{\Omega})$
is called the
space of ${\rm L^{\ast}}$-distributions.

\begin{prop}\label{TH: UniBdd}
A linear functional $w$ on
$C_{{\rm L}^*}^{\infty}(\overline{\Omega})$ belongs to $\mathcal D'_{{\rm
L}}(\Omega)$ if and only if there exists a constant $c>0$ and a
number $k\in\mathbb N_0$ with the property
\begin{equation}
\label{EQ: UnifBdd-s1} |w(\varphi)|\leq
c \|\varphi\|_{C^{k}_{{\rm L}^*}} \quad \textrm{ for all } \; \varphi\in C_{{\rm
L}^*}^{\infty}(\overline{\Omega}).
\end{equation}
\end{prop}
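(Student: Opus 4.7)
The plan is to recognize this as the standard characterization of continuous linear functionals on a locally convex space whose topology is given by a countable family of seminorms. The crucial feature of our setup is that the defining seminorms $\|\cdot\|_{C^{k}_{{\rm L}^*}}$ are \emph{increasing} in $k$ (since $\max_{j\leq k+1}\|({\rm L}^*)^j\varphi\|_{L^2}\geq\max_{j\leq k}\|({\rm L}^*)^j\varphi\|_{L^2}$), so a neighborhood base of $0$ in $C^\infty_{{\rm L}^*}(\overline{\Omega})$ is given by the single-seminorm sets $U_{k,\varepsilon}:=\{\varphi:\|\varphi\|_{C^{k}_{{\rm L}^*}}<\varepsilon\}$ indexed by $k\in\mathbb N_0$ and $\varepsilon>0$. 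Moreover, because the family is countable, the topology is metrizable, so continuity and sequential continuity of linear functionals coincide, which is why the statement may be interpreted in either sense.

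For the easy direction, suppose the bound \eqref{EQ: UnifBdd-s1} holds for some $c>0$ and $k\in\mathbb N_0$. Then for any $\varphi,\varphi_n\in C^\infty_{{\rm L}^*}(\overline{\Omega})$ with $\varphi_n\to\varphi$ in the Fréchet topology we have $\|\varphi_n-\varphi\|_{C^{k}_{{\rm L}^*}}\to 0$, and linearity of $w$ yields $|w(\varphi_n)-w(\varphi)|\leq c\|\varphi_n-\varphi\|_{C^{k}_{{\rm L}^*}}\to 0$. Equivalently, $w$ is continuous at the origin because it is bounded on the neighborhood $U_{k,1/c}$, and by linearity this is the same as continuity on all of $C^\infty_{{\rm L}^*}(\overline{\Omega})$.

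For the converse, assume $w\in\mathcal D'_{{\rm L}}(\Omega)$. Since the open unit disk in $\mathbb C$ is a neighborhood of $0$, the preimage $w^{-1}(\{|z|<1\})$ is an open neighborhood of $0$ in $C^\infty_{{\rm L}^*}(\overline{\Omega})$. By the base description above, there exist $k\in\mathbb N_0$ and $\varepsilon>0$ with $U_{k,\varepsilon}\subset w^{-1}(\{|z|<1\})$, i.e.\ $\|\varphi\|_{C^{k}_{{\rm L}^*}}<\varepsilon$ implies $|w(\varphi)|<1$. For any nonzero $\varphi$ with $\|\varphi\|_{C^{k}_{{\rm L}^*}}>0$, the rescaling $\widetilde\varphi=\frac{\varepsilon/2}{\|\varphi\|_{C^{k}_{{\rm L}^*}}}\varphi$ lies in $U_{k,\varepsilon}$, and unwinding the resulting inequality $|w(\widetilde\varphi)|<1$ via linearity gives $|w(\varphi)|\leq(2/\varepsilon)\|\varphi\|_{C^{k}_{{\rm L}^*}}$, which is \eqref{EQ: UnifBdd-s1} with $c=2/\varepsilon$. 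The remaining case $\|\varphi\|_{C^{k}_{{\rm L}^*}}=0$ is trivial since this forces $\|\varphi\|_{L^2}=0$ (the norm $\|\cdot\|_{L^2}$ is one of the seminorms being maximized), hence $\varphi=0$ and $w(\varphi)=0$.

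The argument is standard, and there is essentially no hard step: the only mild care needed is in the scaling passage and in using that the largest of finitely many defining seminorms is itself one of the defining seminorms, which holds precisely because the family is increasing in $k$.
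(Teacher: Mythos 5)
Your proof is correct. The easy direction (the estimate implies continuity) is the same in both arguments. Where you differ from the paper is in the converse: you argue directly from the topology, observing that the seminorms $\|\cdot\|_{C^{k}_{{\rm L}^*}}$ increase in $k$ so that the sets $U_{k,\varepsilon}$ form a neighborhood base at $0$, pulling back the unit disk under $w$, and rescaling to extract the estimate (with the correct remark that $\|\varphi\|_{C^{k}_{{\rm L}^*}}=0$ forces $\varphi=0$ because the $j=0$ seminorm is the $L^2$-norm). The paper instead argues by contraposition with sequences: assuming the estimate fails for every $c$ and $k$, it normalizes the witnesses to get $\psi_{c,k}$ with $\|\psi_{c,k}\|_{C^{k}_{{\rm L}^*}}<1/c$ and $|w(\psi_{c,k})|=1$, and then the diagonal sequence $\psi_{k,k}$ tends to $0$ in $C^{\infty}_{{\rm L}^*}(\overline{\Omega})$ while $w(\psi_{k,k})$ does not tend to $0$, contradicting (sequential) continuity. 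The two routes are equivalent in substance: the paper's version matches its stated convention that continuity may be read sequentially, while yours makes the underlying locally convex mechanism explicit and avoids the diagonal construction; either is acceptable since the topology is given by a countable increasing family of seminorms and is therefore metrizable.
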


\begin{proof}
$\Leftarrow$. If $w$ satisfies (\ref{EQ: UnifBdd-s1}), it then
follows that $w(\varphi_{j})-w(\varphi)=w(\varphi_{j}-\varphi)$
converges to 0 as $j\rightarrow\infty.$

$\Rightarrow$. Now suppose that $w$ does not satisfy condition
(\ref{EQ: UnifBdd-s1}). This means that for every $c>0$ and every $k\in\mathbb N_0$,
there is a $\varphi_{c, k}\in C_{{\rm
L}^*}^{\infty}(\overline{\Omega})$ for which $$|w(\varphi_{c, k})|>c
\|\varphi_{c, k}\|_{C^{k}_{{\rm L}^*}}.$$ This implies
$$
\|\psi_{c, k}\|_{C^{k}_{{\rm L}^*}}<\frac{1}{c} \,\,\,\,\,
\hbox{and} \,\,\,\,\, |w(\psi_{c, k})|=1,
$$
if we take $\psi_{c, k}=\lambda\varphi_{c, k}$ and
$\lambda=\frac{1}{|w(\varphi_{c, k})|}$. The sequence $\{\psi_{k,
k}\}_{k\in\mathbb N}$ converges to zero in $C_{{\rm
L}^*}^{\infty}(\overline{\Omega})$, while $w(\psi_{k, k})$ does not
converge to zero. Therefore, $w$ is not a distribution, which gives a
contradiction.
\end{proof}

The space $\mathcal D'_{{\rm L}}(\Omega)$ has many similarities with the
usual spaces of distributions. For example, suppose that for a linear continuous operator
$D:C_{{\rm L}}^{\infty}(\overline{\Omega})\to C_{{\rm L}}^{\infty}(\overline{\Omega})$
its adjoint $D^*$
preserves the adjoint boundary conditions (domain) of ${\rm L}_\Omega^*$
and is continuous on the space
$C_{{\rm L}^*}^{\infty}(\overline{\Omega})$, i.e.
that the operator
$D^*:C_{{\rm L}^*}^{\infty}(\overline{\Omega})\to C_{{\rm L}^*}^{\infty}(\overline{\Omega})$
is continuous.
Then we can extend $D$ to $\mathcal D'_{{\rm L}}(\Omega)$ by
$$
\langle Dw,{\varphi}\rangle := \langle w, \overline{D^* \overline{\varphi}}\rangle \quad
(w\in \mathcal D'_{{\rm L}}(\Omega),\;  \varphi\in C_{{\rm
L}^*}^{\infty}(\overline{\Omega})).
$$
This extends \eqref{EQ:duality-dist} from L to other operators.
The convergence in the linear space
$\mathcal D'_{{\rm L}}(\Omega)$ is the usual weak convergence with respect to
the space $C_{{\rm L}^*}^{\infty}(\overline{\Omega})$.
The following principle of uniform boundedness is based on the
Banach--Steinhaus Theorem applied to the Fr\'echet space $C_{{\rm
L}^*}^{\infty}(\overline{\Omega})$.

\begin{lemma} \label{LEM: UniformBoundedness}
Let $\{w_{j}\}_{j\in\mathbb N}$ be a sequence in $\mathcal
D'_{{\rm L}}(\Omega)$ with the property that for every $\varphi\in
C_{{\rm L}^*}^{\infty}(\overline{\Omega})$, the sequence
$\{w_{j}(\varphi)\}_{j\in\mathbb N}$ in $\mathbb C$ is bounded.
Then there exist constants $c>0$ and $k\in\mathbb N_0$ such that
\begin{equation}
\label{EQ: UniformBoundedness} |w_{j}(\varphi)|\leq c
\|\varphi\|_{C^{k}_{{\rm L}^*}} \quad \textrm{ for all } \; j\in\mathbb N, \,\,
\varphi\in C_{{\rm L}^*}^{\infty}(\overline{\Omega}).
\end{equation}
\end{lemma}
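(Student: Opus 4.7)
The plan is to deduce this lemma directly from the classical Banach--Steinhaus theorem applied to the Fr\'echet space $C_{{\rm L}^*}^{\infty}(\overline{\Omega})$. The statement is essentially the assertion that a pointwise bounded family of continuous linear functionals on a Fr\'echet space is equicontinuous, and equicontinuity at the origin, expressed in terms of the defining family of seminorms \eqref{EQ:L-top-adj}, is exactly \eqref{EQ: UniformBoundedness}.

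First I would verify that $C_{{\rm L}^*}^{\infty}(\overline{\Omega})$ with the topology generated by the countable, increasing family of norms $\{\|\cdot\|_{C^{k}_{{\rm L}^*}}\}_{k\in\mathbb N_0}$ is a Fr\'echet space. The topology is clearly metrizable and locally convex, and the seminorms are separating since $\|\varphi\|_{C^{0}_{{\rm L}^*}}=\|\varphi\|_{L^2(\Omega)}$. For completeness, one takes a Cauchy sequence $\{\varphi_n\}$; then $({\rm L}^*)^{j}\varphi_n$ is Cauchy in $L^2(\Omega)$ for every $j\geq 0$, and the candidate limit $\varphi$ together with its iterates $({\rm L}^*)^{j}\varphi$ lie in ${\rm Dom}(({\rm L}^*)^{k})$ for every $k$ thanks to the closedness of these domains (condition (BC+) mentioned just before Assumption \ref{Assumption_1}). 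So $\varphi\in C_{{\rm L}^*}^{\infty}(\overline{\Omega})$, and $\varphi_n\to\varphi$ in every seminorm.

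Second, by Proposition \ref{TH: UniBdd}, each $w_j$ is continuous on $C_{{\rm L}^*}^{\infty}(\overline{\Omega})$, so $\{w_j\}$ is a family of continuous linear functionals on a Fr\'echet space that is pointwise bounded by hypothesis. The Banach--Steinhaus theorem then yields equicontinuity of $\{w_j\}$ at $0$: there exists a neighbourhood $U$ of the origin in $C_{{\rm L}^*}^{\infty}(\overline{\Omega})$ such that $|w_j(\varphi)|\leq 1$ for all $\varphi\in U$ and all $j\in\mathbb N$.

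Finally, I would translate the equicontinuity into the stated inequality. Any neighbourhood of $0$ in our Fr\'echet topology contains a basic neighbourhood of the form $\{\varphi:\|\varphi\|_{C^{k}_{{\rm L}^*}}<\varepsilon\}$ for some $k\in\mathbb N_0$ and $\varepsilon>0$. Applying this to $\varepsilon\varphi/(2\|\varphi\|_{C^{k}_{{\rm L}^*}})$ (or directly invoking homogeneity of both sides in $\varphi$) yields \eqref{EQ: UniformBoundedness} with $c=2/\varepsilon$. The only delicate point in the whole argument is the verification of completeness of $C_{{\rm L}^*}^{\infty}(\overline{\Omega})$, which is why condition (BC+) is invoked; beyond that, the proof is a direct invocation of standard Fr\'echet-space functional analysis.
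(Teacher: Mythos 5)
Your proof is correct and takes exactly the route the paper intends: the paper states explicitly that Lemma \ref{LEM: UniformBoundedness} is ``based on the Banach--Steinhaus Theorem applied to the Fr\'echet space $C_{{\rm L}^*}^{\infty}(\overline{\Omega})$'' and gives no further details, and your argument fills in precisely the routine steps (metrizability and completeness of the Fr\'echet space via (BC+), equicontinuity from pointwise boundedness, and translation of equicontinuity at $0$ into a single-seminorm estimate). No discrepancy between the two.
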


The lemma above leads to the following property of completeness of
the space of ${\rm L}$-distributions. 

\begin{theorem} \label{TH: Com-nessDistr}
Let $\{w_{j}\}_{j\in\mathbb N}$ be a
sequence in $\mathcal D'_{{\rm L}}(\Omega)$ with the property that
for every $\varphi\in C_{{\rm L}^*}^{\infty}(\overline{\Omega})$ the
sequence $\{w_{j}(\varphi)\}_{j\in\mathbb N}$ converges in
$\mathbb C$ as $j\rightarrow\infty$. Denote the limit by
$w(\varphi)$.

{\rm (i)} Then $w:\varphi\mapsto w(\varphi)$ defines an ${\rm
L}$-distribution on $\Omega$. Furthermore,
$$
\lim_{j\rightarrow\infty}w_{j}=w \,\,\,\,\,\,\,\, \hbox{in}
\,\,\,\,\,\,\, \mathcal D'_{{\rm L}}(\Omega).
$$

{\rm (ii)} If $\varphi_{j}\rightarrow\varphi$ in $\in C_{{\rm
L}^*}^{\infty}(\overline{\Omega})$, then
$$
\lim_{j\rightarrow\infty}w_{j}(\varphi_{j})=w(\varphi) \,\,\,\,\,\,\,\,
\hbox{in} \,\,\,\,\,\,\, \mathbb C.
$$
\end{theorem}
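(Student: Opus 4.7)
The plan is to reduce both parts to the uniform boundedness principle already recorded as Lemma \ref{LEM: UniformBoundedness} and the characterisation of ${\rm L}$-distributions given by Proposition \ref{TH: UniBdd}.

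For part (i), I would first observe that the map $w:\varphi\mapsto w(\varphi)$ is linear, since it is the pointwise limit of the linear functionals $w_j$. To show $w\in\mathcal D'_{\rm L}(\Omega)$, I would invoke Proposition \ref{TH: UniBdd} and thus need a seminorm estimate of the form \eqref{EQ: UnifBdd-s1}. The hypothesis that $\{w_j(\varphi)\}_{j\in\mathbb N}$ converges in $\mathbb C$ for every $\varphi\in C^\infty_{{\rm L}^*}(\overline\Omega)$ implies in particular that this sequence is bounded for each fixed $\varphi$. Therefore Lemma \ref{LEM: UniformBoundedness} applies and yields constants $c>0$ and $k\in\mathbb N_0$ such that
\begin{equation*}
|w_j(\varphi)|\leq c\,\|\varphi\|_{C^k_{{\rm L}^*}} \quad \text{for all } j\in\mathbb N,\ \varphi\in C^\infty_{{\rm L}^*}(\overline\Omega).
\end{equation*}
Letting $j\to\infty$ transfers this bound to $w$, so \eqref{EQ: UnifBdd-s1} holds for $w$, and Proposition \ref{TH: UniBdd} gives $w\in\mathcal D'_{\rm L}(\Omega)$. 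The statement $w_j\to w$ in $\mathcal D'_{\rm L}(\Omega)$ then amounts to testing against every $\varphi\in C^\infty_{{\rm L}^*}(\overline\Omega)$, which is exactly the hypothesis.

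For part (ii), I would use the splitting
\begin{equation*}
w_j(\varphi_j)-w(\varphi)=w_j(\varphi_j-\varphi)+\bigl(w_j(\varphi)-w(\varphi)\bigr).
\end{equation*}
The second term tends to zero as $j\to\infty$ by the definition of $w$. For the first term, the uniform estimate obtained in part (i) gives
\begin{equation*}
|w_j(\varphi_j-\varphi)|\leq c\,\|\varphi_j-\varphi\|_{C^k_{{\rm L}^*}},
\end{equation*}
and the right-hand side converges to $0$ because $\varphi_j\to\varphi$ in the Fréchet topology of $C^\infty_{{\rm L}^*}(\overline\Omega)$, which controls every seminorm $\|\cdot\|_{C^k_{{\rm L}^*}}$.

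The only non-formal point is the application of Lemma \ref{LEM: UniformBoundedness}, which in turn rests on the Banach--Steinhaus theorem for the Fréchet space $C^\infty_{{\rm L}^*}(\overline\Omega)$; this is where the Fréchet structure defined by the seminorms \eqref{EQ:L-top-adj} is essential, and I do not expect any genuine obstacle beyond that.
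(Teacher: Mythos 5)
Your proof is correct and follows essentially the same route as the paper's first proof of this theorem: deduce pointwise boundedness from convergence, invoke Lemma \ref{LEM: UniformBoundedness} (Banach--Steinhaus) for the uniform seminorm estimate, pass to the limit to place $w$ in $\mathcal D'_{\rm L}(\Omega)$ via Proposition \ref{TH: UniBdd}, and then use the same additive splitting $w_j(\varphi_j)-w(\varphi)=w_j(\varphi_j-\varphi)+(w_j(\varphi)-w(\varphi))$ for part (ii). The paper additionally records a second, alternative proof of part (i) by a gliding-hump argument, but that is a supplementary illustration rather than anything your argument is missing.
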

\begin{proof} 
(i) Writing out the definitions, we find that $w$
defines a linear functional on $C_{{\rm
L}^*}^{\infty}(\overline{\Omega})$. From the starting assumption it
follows that the sequence $\{w_{j}(\varphi)\}_{j\in\mathbb N}$ is bounded
for every $\varphi\in C_{{\rm L}^*}^{\infty}(\overline{\Omega})$,
and thus we obtain an estimate of the form (\ref{EQ:
UniformBoundedness}). Taking the limit in
$$
|w(\varphi)|\leq |w(\varphi)-w_{j}(\varphi)|+|w_{j}(\varphi)|\leq
|w(\varphi)-w_{j}(\varphi)|+c \|\varphi\|_{C^{k}_{{\rm L}^*}}
$$
as $j\rightarrow\infty$, we get $$|w(\varphi)|\leq c
\|\varphi\|_{C^{k}_{{\rm L}^*}}$$ for all $\varphi\in C_{{\rm
L}^*}^{\infty}(\overline{\Omega})$. According to Proposition \ref{TH:
UniBdd} this proves that $w\in\mathcal D'_{{\rm L}}(\Omega)$, and
$w_{j}\rightarrow w$ in $\mathcal D'_{{\rm L}}(\Omega)$ now holds
by definition.

(ii) Regarding the last assertion we observe that if
$\varphi_{j}\rightarrow\varphi$ in $C_{{\rm
L}^*}^{\infty}(\overline{\Omega})$, then by applying Lemma \ref{LEM:
UniformBoundedness} once again, we obtain
$$
|w_{j}(\varphi_{j})-w(\varphi)|\leq
|w_{j}(\varphi_{j}-\varphi)|+|w_{j}(\varphi)-w(\varphi)|\leq c
\|\varphi_{j}-\varphi\|_{C^{k}_{{\rm
L}^*}}+|w_{j}(\varphi)-w(\varphi)|,
$$
which converges to zero as $j\rightarrow\infty$.
\end{proof}

The main tool in the proof of Theorem \ref{TH: Com-nessDistr} was
Lemma \ref{LEM: UniformBoundedness}, which is based on the
principle of uniform boundedness. It may be instructive to give
another proof of Part (i) of Theorem \ref{TH: Com-nessDistr}
based on the method of the gliding hump.
\begin{proof}
Suppose that $w$ does not belongs to $\mathcal D'_{{\rm
L}}(\Omega)$. Then there exists a sequence
$\{\varphi_{j}\}_{j\in\mathbb N}$ in $C_{{\rm
L}^*}^{\infty}(\overline{\Omega})$ that converges to zero in
$C_{{\rm L}^*}^{\infty}(\overline{\Omega})$, while
$\{w(\varphi_{j})\}_{j\in\mathbb N}$ does not converge to zero as
$j\rightarrow\infty$. Hence, by passing to a subsequence if
necessary, we can arrange that there exists $c>0$ such that
$|w(\varphi_{j})|\geq c$. We can assume that
$\|\varphi_{j}\|_{C^{j}_{{\rm L}^*}}\leq\frac{1}{4^{j}}$ if we
replace $\{\varphi_{j}\}_{j\in\mathbb N}$ by a suitable
subsequence if necessary. Accordingly, upon writing $\varphi_{j}$
for $2^{j}\varphi_{j}$, we obtain that $\varphi_{j}\rightarrow0$
in $C_{{\rm L}^*}^{\infty}(\overline{\Omega})$, while
$|w(\varphi_{j})|\rightarrow\infty$ as $j\rightarrow\infty$.

Next, we define a subsequence of $\{\varphi_{j}\}_{j\in\mathbb
N}$, say $\{\psi_{j}\}_{j\in\mathbb N}$ in $C_{{\rm
L}^*}^{\infty}(\overline{\Omega})$, and a subsequence of
$\{w_{j}\}_{j\in\mathbb N}$, say $\{v_{j}\}_{j\in\mathbb N}$ in
$\mathcal D'_{{\rm L}}(\Omega)$, as follows. Select $\psi_{1}$
such that $|w(\psi_{1})|>2$. As $w_{j}(\psi_{1})\rightarrow
w(\psi_{1})$, we may choose $v_{1}$ such that
$|v_{1}(\psi_{1})|>2$. Now proceed by induction on
$j$. Thus, assume that $\psi_{k}$ and $v_{k}$ have been chosen,
for $1\leq k<j$. Then select $\psi_{j}$ from the sequence
$\{\varphi_{j}\}_{j\in\mathbb N}$ such that
\begin{equation}
\label{EQ: Estimates} \begin{split} &{\rm (a)} \,\,\,\,
\|\psi_{j}\|_{C^{j}_{{\rm L}^*}}<\frac{1}{2^{j}}, \,\,\,\,\,
\,\,\,\,\,\,\,\,\,\,\,\,\,\,\,  {}\\
&{\rm (b)} \,\,\,\, |v_{k}(\psi_{j})|<\frac{1}{2^{j-k}}, \,\,\,\,\,
(1\leq
k<j),{}\\
&{\rm (c)} \,\,\,\, |w(\psi_{j})|>\sum\limits_{1\leq
k<j}|w(\psi_{k})|+j+1.
\end{split}
\end{equation}
Condition (a) can be satisfied because of the properties of the
$\varphi_{i}$; and (b) because of $\varphi_{j}\rightarrow0$ in
$C_{{\rm L}^*}^{\infty}(\overline{\Omega})$ and all $v_{k}$ belong
to $\mathcal D'_{{\rm L}}(\Omega)$, for $1\leq k<j$; whereas (c)
holds because $|w(\varphi_{j})|\rightarrow\infty$ as
$j\rightarrow\infty$. In addition, since
$\lim\limits_{j\rightarrow\infty}w_{j}(\psi)=w(\psi)$, for all
$\psi\in C_{{\rm L}^*}^{\infty}(\overline{\Omega})$, condition (c)
implies that we may select $v_{j}$ from the sequence
$\{w_{j}\}_{j\in\mathbb N}$ such that
\begin{equation}
\label{EQ: Estimates2} |v_{j}(\psi_{j})|>\sum\limits_{1\leq
k<j}|v_{j}(\psi_{k})|+j+1.
\end{equation}

Now, set $\psi:=\sum\limits_{k\in\mathbb N}\psi_{k}$. According to
(a) the series on the right-hand side
converges in $C_{{\rm L}^*}^{\infty}(\overline{\Omega})$, which
leads to $\psi\in C_{{\rm L}^*}^{\infty}(\overline{\Omega})$.
Obviously, for any $j$,
$$
v_{j}(\psi)=\sum\limits_{1\leq
k<j}v_{j}(\psi_{k})+v_{j}(\psi_{j})+\sum\limits_{j<k}v_{j}(\psi_{k}),
$$
hence
$$
|v_{j}(\psi)|\geq |v_{j}(\psi_{j})|-\sum\limits_{1\leq
k<j}|v_{j}(\psi_{k})|-\sum\limits_{j<k}|v_{j}(\psi_{k})|>j+1-1=j,
$$
on account of (\ref{EQ: Estimates2}) and (b). On the other hand, $\{v_{j}\}_{j\in\mathbb N}$
being a subsequence of $\{w_{j}\}_{j\in\mathbb N}$ implies
$\lim\limits_{j\rightarrow\infty}v_{j}(\psi)=w(\psi)$.
Summarising these properties, we have arrived at a contradiction.
\end{proof}

Similarly to the previous case, we have analogues of
Proposition \ref{TH: UniBdd} and Theorem \ref{TH: Com-nessDistr}
for ${\rm L^{\ast}}$-distributions.

\section{${\rm L}$-Fourier transform}
\label{SEC:FT}

In this section we define the ${\rm L}$-Fourier transform generated by
our boundary value problem ${\rm L}_\Omega$ and its main properties.
The main difference between the self-adjoint and non-self-adjoint
problems ${\rm L}_\Omega$
is that in the latter case we have to make sure that we use the right
functions from the available biorthogonal families of $u_\xi$ and $v_\xi$.
We start by defining the spaces that we will obtain on the Fourier transform side.

From now on, we will assume that the boundary conditions are closed under taking limits in the
strong uniform topology to ensure that the strongly convergent series preserve the boundary conditions.
More precisely, from now on:

\MyQuote{assume that, with ${\rm L}_0$ denoting ${\rm L}$ or ${\rm L}^*$, if
$f_j\in C_{{\rm L}_0}^{\infty}(\overline{\Omega})$ satisfies $f_j\to f$ in $C_{{\rm L}_0}^{\infty}(\overline{\Omega})$,
then $f\in C_{{\rm L}_0}^{\infty}(\overline{\Omega})$.}{BC+}


\medskip
Let $\mathcal S(\ind)$ denote the space of rapidly decaying
functions $\varphi:\ind\rightarrow\mathbb C$. That is,
$\varphi\in\mathcal S(\ind)$ if for any $M<\infty$ there
exists a constant $C_{\varphi, M}$ such that
$$
|\varphi(\xi)|\leq C_{\varphi, M}\langle\xi\rangle^{-M}
$$
holds for all $\xi\in\ind$.
Here $\langle\xi\rangle$ is already adapted to our boundary value problem
since it is defined by \eqref{EQ:angle}.

The topology on $\mathcal
S(\ind)$ is given by the seminorms $p_{k}$, where
$k\in\mathbb N_{0}$ and $$p_{k}(\varphi):=\sup_{\xi\in\ind}\langle\xi\rangle^{k}|\varphi(\xi)|.$$
Continuous linear functionals on $\mathcal S(\ind)$ are of
the form
$$
\varphi\mapsto\langle u, \varphi\rangle:=\sum_{\xi\in\ind}u(\xi)\varphi(\xi),
$$
where functions $u:\ind \rightarrow \mathbb C$ grow at most
polynomially at infinity, i.e. there exist constants $M<\infty$
and $C_{u, M}$ such that
$$
|u(\xi)|\leq C_{u, M}\langle\xi\rangle^{M}
$$
holds for all $\xi\in\ind$. Such distributions $u:\ind
\rightarrow \mathbb C$ form the space of distributions which we denote by
$\mathcal S'(\ind)$.
We now define the $L$-Fourier transform on $C_{{\rm L}}^{\infty}(\overline{\Omega})$.

\begin{defn} \label{FT}
We define the ${\rm L}$-Fourier transform
$$
(\mathcal F_{{\rm L}}f)(\xi)=(f\mapsto\widehat{f}):
C_{{\rm L}}^{\infty}(\overline{\Omega})\rightarrow\mathcal S(\ind)
$$
by
\begin{equation}
\label{FourierTr}
\widehat{f}(\xi):=(\mathcal F_{{\rm L}}f)(\xi)=\int_{\Omega}f(x)\overline{v_{\xi}(x)}dx.
\end{equation}
Analogously, we define the ${\rm L}^{\ast}$-Fourier
transform
$$
(\mathcal F_{{\rm L}^{\ast}}f)(\xi)=(f\mapsto\widehat{f}_{\ast}):
C_{{\rm L}^{\ast}}^{\infty}(\overline{\Omega})\rightarrow\mathcal
S(\ind)
$$
by
\begin{equation}\label{ConjFourierTr}
\widehat{f}_{\ast}(\xi):=(\mathcal F_{{\rm
L}^{\ast}}f)(\xi)=\int_{\Omega}f(x)\overline{u_{\xi}(x)}dx.
\end{equation}
\end{defn}

The expressions \eqref{FourierTr} and \eqref{ConjFourierTr}
are well-defined by the Cauchy-Schwarz inequality, for example,
\begin{equation}
\label{EQ: Ineq1}
|\widehat{f}(\xi)|=\left|\int_{\Omega}f(x)\overline{v_{\xi}(x)}dx\right|\leq\|f\|_{L^{2}}
\|v_{\xi}\|_{L^{2}}=\|f\|_{L^{2}}<\infty.
\end{equation}
Moreover, we have

\begin{prop}\label{LEM: FTinS}
The ${\rm L}$-Fourier transform
$\mathcal F_{{\rm L}}$ is a bijective homeomorphism from $C_{{\rm
L}}^{\infty}(\overline{\Omega})$ to $\mathcal S(\ind)$.
Its inverse  $$\mathcal F_{{\rm L}}^{-1}: \mathcal S(\ind)
\rightarrow C_{{\rm L}}^{\infty}(\overline{\Omega})$$ is given by
\begin{equation}
\label{InvFourierTr} (\mathcal F^{-1}_{{\rm
L}}h)(x)=\sum_{\xi\in\ind}h(\xi)u_{\xi}(x),\quad h\in\mathcal S(\ind),
\end{equation}
so that the Fourier inversion formula becomes
\begin{equation}
\label{InvFourierTr0}
f(x)=\sum_{\xi\in\ind}\widehat{f}(\xi)u_{\xi}(x)
\quad \textrm{ for all } f\in C_{{\rm
L}}^{\infty}(\overline{\Omega}).
\end{equation}
Similarly,  $\mathcal F_{{\rm L}^{\ast}}:C_{{\rm L}^{\ast}}^{\infty}(\overline{\Omega})\to \mathcal S(\ind)$
is a bijective homeomorphism and its inverse
$$\mathcal F_{{\rm L}^{\ast}}^{-1}: \mathcal S(\ind)\rightarrow
C_{{\rm L}^{\ast}}^{\infty}(\overline{\Omega})$$ is given by
\begin{equation}
\label{ConjInvFourierTr} (\mathcal F^{-1}_{{\rm
L}^{\ast}}h)(x):=\sum_{\xi\in\ind}h(\xi)v_{\xi}(x), \quad h\in\mathcal S(\ind),
\end{equation}
so that the conjugate Fourier inversion formula becomes
\begin{equation}
\label{ConjInvFourierTr0} f(x)=\sum_{\xi\in\ind}\widehat{f}_{\ast}(\xi)v_{\xi}(x)\quad \textrm{ for all } f\in C_{{\rm
L^*}}^{\infty}(\overline{\Omega}).
\end{equation}
\end{prop}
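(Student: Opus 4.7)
The plan is to establish the four claims for $\mathcal{F}_L$ (well-definedness into $\mathcal S(\ind)$, continuity, the inverse formula, and bijectivity) and then invoke the symmetry between the biorthogonal systems $\{u_\xi\}$ and $\{v_\xi\}$ to get the corresponding statements for $\mathcal{F}_{L^*}$.

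\textbf{Forward map.} First I would show $\mathcal{F}_L(C^\infty_L(\overline{\Omega}))\subset\mathcal S(\ind)$ with continuity. The crucial intertwining identity comes from the duality \eqref{EQ:duality} and the eigenfunction property $L^* v_\xi=\overline{\lambda_\xi}v_\xi$: for $f\in C^\infty_L(\overline{\Omega})$,
\[
\widehat{L^k f}(\xi)=(L^k f,v_\xi)_{L^2}=(f,(L^*)^k v_\xi)_{L^2}=\lambda_\xi^k\,\widehat{f}(\xi).
\]
Together with Cauchy--Schwarz and $\|v_\xi\|_{L^2}=1$ this yields $|\lambda_\xi|^k|\widehat f(\xi)|\leq \|L^k f\|_{L^2}\leq \|f\|_{C^k_L}$, from which the seminorm estimate
\[
\langle\xi\rangle^{mk}|\widehat f(\xi)|\lesssim \|f\|_{C^k_L}
\]
follows from \eqref{EQ:angle} by splitting $(1+|\lambda_\xi|^2)^{k/2}$. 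This simultaneously places $\widehat f$ in $\mathcal S(\ind)$ and proves continuity of $\mathcal F_L$.

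\textbf{Inverse and surjectivity.} Given $h\in\mathcal S(\ind)$, I would set $f:=\sum_\xi h(\xi)u_\xi$ and check convergence in every seminorm $\|\cdot\|_{C^k_L}$. Using $\|u_\xi\|_{L^2}=1$ and $L^j u_\xi=\lambda_\xi^j u_\xi$, the triangle inequality gives
\[
\Bigl\|L^j\sum_{\xi\in F} h(\xi)u_\xi\Bigr\|_{L^2}\leq \sum_{\xi\in F}\langle\xi\rangle^{mj}|h(\xi)|
\]
for any finite $F\subset\ind$, which is Cauchy by rapid decay of $h$ (the index set being discrete of polynomial growth). The partial sums lie in $C^\infty_L(\overline{\Omega})$ since each $u_\xi$ does, and the closure assumption (BC+) forces the strong limit $f$ to remain in $C^\infty_L(\overline{\Omega})$. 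Then biorthogonality \eqref{BiorthProp} applied term by term (justified by $L^2$-convergence) gives $\widehat f(\eta)=\sum_\xi h(\xi)(u_\xi,v_\eta)_{L^2}=h(\eta)$, so $\mathcal F_L\circ\mathcal F_L^{-1}=\mathrm{Id}_{\mathcal S(\ind)}$.

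\textbf{Injectivity and inversion formula.} If $\widehat f(\xi)=0$ for every $\xi\in\ind$, then since $\{u_\xi\}$ is a Schauder basis of $L^2(\Omega)$ by Assumption \ref{Assumption_3}, $f$ admits a unique expansion $f=\sum_\xi a_\xi u_\xi$; pairing with $v_\eta$ and using biorthogonality yields $a_\eta=(f,v_\eta)_{L^2}=\widehat f(\eta)=0$, so $f=0$. This also gives the inversion formula \eqref{InvFourierTr0} applied to $a_\xi=\widehat f(\xi)$. Continuity of $\mathcal F_L^{-1}$ is read off from the convergence estimate above. For $\mathcal F_{L^*}$, one repeats the argument verbatim with the roles of $u_\xi$ and $v_\xi$ swapped, using $Lu_\xi=\lambda_\xi u_\xi$ in place of $L^* v_\xi=\overline{\lambda_\xi}v_\xi$.

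The main delicate point is the verification that $f=\mathcal F_L^{-1}h$ actually lies in $C^\infty_L(\overline{\Omega})$: the in-$L^2$ convergence of $\sum h(\xi)u_\xi$ and each of its $L^j$-images is easy from rapid decay, but concluding that the limit satisfies (BC) is precisely what the newly imposed hypothesis (BC+) is for. A secondary subtlety is that the summability $\sum_\xi \langle\xi\rangle^{-M}<\infty$ (for some $M$) is needed to pass from rapid decay of $h$ to absolute convergence; this is implicit in the framework and holds in the concrete examples via Weyl-type eigenvalue counting.
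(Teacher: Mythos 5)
Your proof is correct and follows essentially the same route as the paper's: the intertwining identity $\widehat{{\rm L}^k f}(\xi)=\lambda_\xi^k\widehat f(\xi)$ plus Cauchy--Schwarz for the forward direction, and (BC+) together with biorthogonality for the inverse. You are in fact somewhat more careful than the paper, which asserts convergence of $\sum_\xi h(\xi)u_\xi$ in $C^\infty_{\rm L}(\overline\Omega)$ without comment; the summability $\sum_\xi\langle\xi\rangle^{-M}<\infty$ you flag as needed is only introduced by the authors later (Assumption \ref{Assumption_4}), so your remark correctly identifies a hypothesis the paper uses implicitly at this point.
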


\begin{proof}[Proof of Proposition \ref{LEM: FTinS}]
The proof is largely similar to the standard case, so we only indicate a few key points
due to biorthogonality.
We show first that
for any $f\in C_{{\rm
L}}^{\infty}(\overline{\Omega})$ we have $\widehat{f}\in\mathcal
S(\ind)$, i.e. that
for any $M<\infty$
there exists a constant $C$ such that
$$
|\widehat{f}(\xi)|\leq C\langle\xi\rangle^{-M}
$$
holds for all $\xi\in\ind$.
Indeed, for any $M\in\mathbb N$ and
$\lambda_\xi\not=0$ we get
\begin{multline*}
|\widehat{f}(\xi)|=\left|\int_{\Omega}f(x)\overline{v_{\xi}(x)}dx\right|=\left|\int_{\Omega}f(x)\frac{\overline{({\rm
L^{\ast}})^{M}v_{\xi}(x)}}{\lambda^{M}_{\xi}}dx\right|
\\
=\left|\frac{1}{\lambda^{M}_{\xi}}\int_{\Omega}
{\rm L}^{M}f(x)\overline{v_{\xi}(x)}dx\right|\leq C\|{\rm L}^{M}f\|_{L^2(\Omega)}\langle\xi\rangle^{-mM}
\end{multline*}
by the Cauchy-Schwarz inequality.
In view of \eqref{EQ:L-top}, this also shows that $\mathcal F_{{\rm L}}$ is continuous from
$C_{{\rm L}}^{\infty}(\overline{\Omega})$ to $\mathcal S(\ind)$.

Now, in view of (BC+), for any $h\in\mathcal S(\ind)$ the formula \eqref{InvFourierTr}
defines a function $\mathcal F^{-1}_{{\rm L}}h\in C_{{\rm L}}^{\infty}(\overline{\Omega})$
with Fourier coefficients $h(\xi)$ due to biorthogonality relations \eqref{BiorthProp}.
If two function $f_{1}, \, f_{2}\in C_{{\rm
L}}^{\infty}(\overline{\Omega})$
have the same Fourier coefficients
$\widehat{f_{1}}(\xi)=\widehat{f_{2}}(\xi)$
for all $\xi\in\ind$, since the linear span
$\{u_{\xi}\}_{\xi\in\ind}$ is dense in $C_{{\rm
L}}^{\infty}(\overline{\Omega})$, we have
$$
f_{1}(x)=\sum_{\xi\in\ind}\widehat{f_{1}}(\xi)u_{\xi}(x)=\sum_{\xi\in\ind}\widehat{f_{2}}(\xi)u_{\xi}(x)=f_{2}(x).
$$
The continuity of $\mathcal F_{{\rm L}}^{-1}: \mathcal S(\ind)
\rightarrow C_{{\rm L}}^{\infty}(\overline{\Omega})$ readily follows as well.
The properties of the conjugate Fourier transform
$\mathcal F_{{\rm L}^{\ast}}$ can be seen in an analogous way.
\end{proof}

By dualising the inverse ${\rm L}$-Fourier
transform $\mathcal F_{{\rm L}}^{-1}: \mathcal S(\ind)
\rightarrow C_{{\rm L}}^{\infty}(\overline{\Omega})$, the
${\rm L}$-Fourier transform extends uniquely to the mapping
$$\mathcal F_{{\rm L}}: \mathcal D'_{{\rm L}}(\Omega)\rightarrow
\mathcal S'(\ind)$$ by the formula
\begin{equation}\label{EQ: FTofDistr}
\langle\mathcal F_{{\rm L}}w, \varphi\rangle:=
\langle w,\overline{\mathcal F_{{\rm L}^*}^{-1}\overline{\varphi}}\rangle,
\quad\textrm{ with } w\in\mathcal D'_{{\rm L}}(\Omega),\; \varphi\in\mathcal
S(\ind).
\end{equation}
It can be readily seen that if $w\in\mathcal D'_{{\rm
L}}(\Omega)$ then $\widehat{w}\in\mathcal S'(\ind)$.
The reason for taking complex conjugates in \eqref{EQ: FTofDistr}
is that, if $w\in C_{{\rm L}}^{\infty}(\overline{\Omega})$, we have
the equality
\begin{multline*}
\langle \widehat{w},\varphi\rangle =
\sum_{\xi\in\ind} \widehat{w}(\xi) \varphi(\xi)=
\sum_{\xi\in\ind} \left( \int_\Omega w(x) \overline{v_\xi(x)}dx\right) \varphi(\xi)\\
=
\int_\Omega w(x) \overline{\left( \sum_{\xi\in\ind} \overline{\varphi(\xi)} v_\xi(x)\right)} dx
=
\int_\Omega w(x) \overline{\left( \mathcal F_{{\rm L}^*}^{-1} \overline{\varphi} \right)} dx
=\langle w,\overline{\mathcal F_{{\rm L}^*}^{-1}\overline{\varphi}}\rangle.
\end{multline*}
Analogously, we have the mapping
$$\mathcal F_{{\rm L}^*}: \mathcal D'_{{\rm L}^*}(\Omega)\rightarrow
\mathcal S'(\ind)$$ defined by the formula
\begin{equation}\label{EQ: FTofDistr}
\langle\mathcal F_{{\rm L}^*}w, \varphi\rangle:=
\langle w,\overline{\mathcal F_{{\rm L}}^{-1}\overline{\varphi}}\rangle,
\quad\textrm{ with } w\in\mathcal D'_{{\rm L}^*}(\Omega),\; \varphi\in\mathcal
S(\ind).
\end{equation}
It can be also seen that if $w\in\mathcal D'_{{\rm
L}^*}(\Omega)$ then $\widehat{w}\in\mathcal S'(\ind)$.

We note that since systems of  $u_\xi$ and of $v_\xi$ are Riesz bases,
we can also compare $L^2$-norms of functions with sums of squares of Fourier coefficients.
The following statement follows from the work of Bari \cite[Theorem 9]{bari}:

\begin{lemma}\label{LEM: FTl2}
There exist constants $k,K,m,M>0$ such that for every $f\in L^{2}(\Omega)$
we have
$$
m^2\|f\|_{L^{2}}^2 \leq \sum_{\xi\in\ind} |\widehat{f}(\xi)|^2\leq M^2\|f\|_{L^{2}}^2
$$
and
$$
k^2\|f\|_{L^{2}}^2 \leq \sum_{\xi\in\ind} |\widehat{f}_*(\xi)|^2\leq K^2\|f\|_{L^{2}}^2.
$$
\end{lemma}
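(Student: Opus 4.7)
The plan is to recognize the lemma as the Riesz basis inequality applied to the expansion coefficients of $f$ in the basis $\{u_\xi\}$. The argument has two ingredients: identifying $\widehat{f}(\xi)$ with the $u_\xi$-expansion coefficient of $f$ via biorthogonality, and then quoting Bari's Theorem 9 from \cite{bari} to convert this identification into a two-sided $\ell^2$ estimate. For the first ingredient, Assumption \ref{Assumption_3} gives any $f\in L^2(\Omega)$ a unique convergent expansion $f=\sum_{\xi\in\ind} a_\xi u_\xi$ in $L^2(\Omega)$; taking the $L^2$-pairing with $v_\eta$, interchanging inner product with the convergent sum by continuity, and collapsing via \eqref{BiorthProp} yields $a_\eta=(f,v_\eta)_{L^2}=\widehat{f}(\eta)$. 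Thus $\sum_\xi|\widehat{f}(\xi)|^2=\sum_\xi|a_\xi|^2$ is precisely the squared norm of the coefficient sequence of $f$ in the basis $\{u_\xi\}$.

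For the second ingredient, Bari's characterization asserts that, under the normalization $\|u_\xi\|_{L^2}=\|v_\xi\|_{L^2}=1$ together with the fact that both $\{u_\xi\}$ and $\{v_\xi\}$ are bases in $L^2(\Omega)$ (the latter being the consequence of Assumption \ref{Assumption_3} recorded immediately after it), the basis $\{u_\xi\}$ is equivalent to an orthonormal basis in the sense of Riesz. Concretely, there exist constants $m,M>0$ such that
\begin{equation*}
m^{2}\sum_{\xi\in\ind}|a_\xi|^{2}\;\leq\;\Big\|\sum_{\xi\in\ind} a_\xi u_\xi\Big\|_{L^2}^{2}\;\leq\; M^{2}\sum_{\xi\in\ind}|a_\xi|^{2}
\end{equation*}
for every sequence $(a_\xi)\in\ell^2(\ind)$. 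Substituting $a_\xi=\widehat{f}(\xi)$ and combining with the identification from the first ingredient (plus the Fourier inversion formula \eqref{InvFourierTr0} for the $L^2$-norm on the middle term) immediately produces the stated inequality for $\widehat{f}$.

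The inequality for $\widehat{f}_{*}$ is then obtained by perfect symmetry: apply the same argument with the roles of $\{u_\xi\}$ and $\{v_\xi\}$ interchanged, noting that $\widehat{f}_{*}(\xi)=(f,u_\xi)_{L^2}$ is, by the same biorthogonality computation, the coefficient in the expansion of $f$ with respect to the basis $\{v_\xi\}$; Bari's theorem applied to $\{v_\xi\}$ delivers constants $k,K>0$. The main obstacle that I would expect is not computational but bibliographic: one needs to verify that the abstract hypotheses in force here --- normalized biorthogonal pair of bases arising from a (possibly non-self-adjoint) spectral problem --- exactly match the setup of Bari's Theorem 9, so that the Riesz-basis conclusion is legitimately invoked. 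This amounts to confirming that the minimality and $\omega$-linear independence hypotheses built into Bari's framework are automatic from \eqref{BiorthProp} and Assumption \ref{Assumption_3}, which is standard but deserves an explicit reference.
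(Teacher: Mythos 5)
Your proof takes essentially the same route as the paper: the paper offers no argument beyond citing Bari's Theorem 9, and your write-up merely makes explicit the routine identification of $\widehat{f}(\xi)$ with the coefficient of $f$ in the expansion $f=\sum_\xi a_\xi u_\xi$ before invoking the Riesz-basis inequality, plus the symmetric argument for $\{v_\xi\}$. The one caveat you flag yourself is the genuine one: a normalized basis whose normalized biorthogonal system is also a basis need not be a Riesz basis (conditional bases of $L^2$ exist), so the Riesz property must really be extracted from Bari's theorem under its actual hypotheses or taken as an additional standing assumption --- but the paper makes exactly the same leap by simply asserting, in the sentence preceding the lemma, that $\{u_\xi\}$ and $\{v_\xi\}$ are Riesz bases.
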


However, we note that the Plancherel identity can be also achieved in suitably
defined $l^2$-spaces of Fourier coefficients, see Proposition \ref{PlanchId}.

\section{${\rm L}$-Convolution}
\label{SEC:conv}

Let us introduce a notion of the  ${\rm L}$-convolution, an analogue of the convolution adapted to the
boundary problem ${\rm L}_\Omega$.

\begin{defn} (${\rm L}$-Convolution) \label{Convolution}
For $f, g\in C_{{\rm L}}^{\infty}(\overline{\Omega})$ define their ${\rm L}$-convolution
by
\begin{equation}\label{EQ: CONV1}
(f\sL g)(x):=\sum_{\xi\in\ind}\widehat{f}(\xi)\widehat{g}(\xi)u_{\xi}(x).
\end{equation}
By Proposition \ref{LEM: FTinS}  it is well-defined and we have
$f\sL g\in C_{{\rm L}}^{\infty}(\overline{\Omega}).$

Moreover, due to the rapid decay of L-Fourier coefficients of
functions in $C_{{\rm L}}^{\infty}(\overline{\Omega})$ compared to
a fixed polynomial growth of elements of $\mathcal S'(\ind)$, the
definition \eqref{EQ: CONV1}  still makes sense if $f\in \mathcal
D^\prime_{\rm L}(\Omega)$ and $g\in C_{{\rm
L}}^{\infty}(\overline{\Omega})$, with $f\sL g\in C_{{\rm
L}}^{\infty}(\overline{\Omega}).$
\end{defn}

Analogously to the ${\rm L}$-convolution, we can introduce the ${\rm L}^*$-convolution.
Thus, for $f, g\in C_{{\rm
L^{\ast}}}^{\infty}(\overline{\Omega})$, we define the ${\rm
L^{\ast}}$-convolution using the ${\rm L}^*$-Fourier transform by
\begin{equation}\label{EQ: CONV2}
(f\sLs g)(x):=\sum_{\xi\in\ind}\widehat{f}_{\ast}(\xi)\widehat{g}_{\ast}(\xi)v_{\xi}(x).
\end{equation}
Its properties are similar to those of the ${\rm L}$-convolution, so we may
formulate and prove only the latter.

\begin{rem}
Informally, expanding the definitions of the Fourier transforms in \eqref{EQ: CONV1},
we can also write
\begin{equation}
\label{CONV} (f\sL
g)(x):=\int_{\Omega}\int_{\Omega}F(x,y,z)f(y)g(z)dydz,
\end{equation}
where
$$
F(x,y,z)=\sum_{\xi\in\ind}u_{\xi}(x) \ \overline{v_{\xi}(y)}
\ \overline{v_{\xi}(z)}.
$$
The latter series should be understood in the sense of distributions.

\medskip
In the case of operator ${\rm L}={\rm O}_{1}^{(1)}$ generated by the operator of differentiation with
periodic boundary condition on the interval $(0,1)$,
see the case $h=1$ in Example \ref{Example1}
as in \cite{Ruzhansky-Turunen-JFAA-torus}, we have
$$
F(x,y,z)=\delta(x-y-z).
$$
For any $h>0$, it can be shown that the convolution generated by the operator ${\rm O}_{h}^{(1)}$
from Example \ref{Example1} has
also the following integral form:
$$
(f\star_{{\rm O}_{h}^{(1)}}
g)(x)=\int^{x}_{0}f(x-t)g(t)dt+\frac{1}{h}\int^{1}_{x}f(1+x-t)g(t)dt,
$$
see \cite{Kanguzhin_Tokmagambetov} and \cite{Kanguzhin_Tokmagambetov_Tulenov}.
\end{rem}

\begin{prop}\label{ConvProp} For any $f, g\in C_{{\rm
L}}^{\infty}(\overline{\Omega})$ we have
$$\widehat{f\sL g}=\widehat{f}\,\widehat{g}.$$
The convolution is commutative and associative.
If $g \in C_{{\rm
L}}^{\infty}(\overline{\Omega}),$ then for all
$f\in \mathcal D^\prime_{\rm L}(\Omega)$ we have
\begin{equation}\label{EQ:conv1}
f\sL g\in C_{{\rm L}}^{\infty}(\overline{\Omega}).
\end{equation}
If $f,g\in  L^{2}(\Omega)$, then $f\sL g\in L^{1}(\Omega)$ with
$$\|f\sL g\|_{L^1}\leq C|\Omega|^{1/2} \|f\|_{L^2}\|g\|_{L^2},$$
where $|\Omega|$ is the volume of $\Omega$, with $C$ independent
of $f,g,\Omega$.
\end{prop}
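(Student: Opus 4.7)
The plan is to treat the convolution as a multiplier operation on the Fourier side and to translate each assertion into a statement about Fourier coefficients. For the identity $\widehat{f \sL g} = \widehat{f}\,\widehat{g}$, I would start from the defining series \eqref{EQ: CONV1}, integrate term by term against $\overline{v_\eta}$, and exploit the biorthogonality relation \eqref{BiorthProp} so that only the $\xi = \eta$ term survives. Termwise integration is legitimate because $\widehat{f}, \widehat{g} \in \mathcal{S}(\ind)$ by Proposition \ref{LEM: FTinS}, giving absolute convergence of the series in $L^2(\Omega)$ against the unit vector $v_\eta$. Once $\mathcal{F}_L$ converts $\sL$ into pointwise multiplication, commutativity and associativity follow at once from the commutativity and associativity of multiplication in $\mathbb{C}$, transferred back by the bijectivity of $\mathcal{F}_L$ on $C^\infty_L(\overline{\Omega})$.

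For the smoothness statement \eqref{EQ:conv1}, I would extend the same principle: if $f \in \mathcal{D}'_L(\Omega)$, then $\widehat{f} \in \mathcal{S}'(\ind)$ has at most polynomial growth, while $\widehat{g} \in \mathcal{S}(\ind)$ decays rapidly; hence the product $\widehat{f}\,\widehat{g}$ lies in $\mathcal{S}(\ind)$. Applying $\mathcal{F}_L^{-1}$ via \eqref{InvFourierTr} and invoking Proposition \ref{LEM: FTinS} places $f \sL g$ in $C^\infty_L(\overline{\Omega})$. One should verify that this agrees with \eqref{EQ: CONV1} when $f$ is only a distribution, which requires interpreting $\widehat{f}(\xi)$ via the pairing $\langle f, \overline{v_\xi}\rangle$; but since $v_\xi \in C^\infty_{L^*}(\overline{\Omega})$, this is well-defined, and the series \eqref{EQ: CONV1} converges in $C^\infty_L(\overline{\Omega})$ by the seminorm bounds of \eqref{EQ:L-top}.

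For the $L^2 \times L^2 \to L^1$ bound, my idea is to estimate the defining series \eqref{EQ: CONV1} directly in $L^1$-norm. Using the normalisation $\|u_\xi\|_{L^2} = 1$ together with Cauchy–Schwarz on $\Omega$, I get $\|u_\xi\|_{L^1} \leq |\Omega|^{1/2}$, so
\begin{equation*}
\|f \sL g\|_{L^1} \;\leq\; \sum_{\xi\in\ind} |\widehat{f}(\xi)|\,|\widehat{g}(\xi)|\,\|u_\xi\|_{L^1} \;\leq\; |\Omega|^{1/2} \sum_{\xi\in\ind} |\widehat{f}(\xi)|\,|\widehat{g}(\xi)|.
\end{equation*}
A further application of Cauchy–Schwarz on $\ind$, combined with the two-sided estimate from Lemma \ref{LEM: FTl2}, yields $\sum_\xi |\widehat{f}(\xi)|\,|\widehat{g}(\xi)| \leq M^2 \|f\|_{L^2}\|g\|_{L^2}$, completing the bound with $C = M^2$.

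The main subtlety, I expect, lies in justifying that \eqref{EQ: CONV1} extends meaningfully from the $C^\infty_L(\overline{\Omega})$ setting to $f, g \in L^2(\Omega)$: a priori $\widehat{f}, \widehat{g}$ are only in $\ell^2(\ind)$ (by Lemma \ref{LEM: FTl2}), so one must show the series converges absolutely in $L^1(\Omega)$ before the norm estimate can be read off. This is handled by the Cauchy–Schwarz computation above, which shows the partial sums form a Cauchy sequence in $L^1(\Omega)$, and hence the limit defines $f \sL g$ unambiguously, consistent with the smooth case by density of $C^\infty_L(\overline{\Omega})$ in $L^2(\Omega)$ (which follows from Assumption \ref{Assumption_3}).
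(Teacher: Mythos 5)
Your proof follows essentially the same route as the paper: termwise integration against $\overline{v_\eta}$ via biorthogonality for the multiplier identity, transfer of (commutativity/)associativity through the bijective $\mathcal F_L$, polynomial-versus-rapid-decay to place $f\sL g$ in $C^\infty_{\rm L}(\overline\Omega)$, and the Cauchy--Schwarz/Lemma~\ref{LEM: FTl2} combination together with $\|u_\xi\|_{L^1}\le|\Omega|^{1/2}$ for the $L^1$-bound. The only stylistic difference is that you argue associativity on the Fourier side (an alternative the paper mentions but does not carry out), and you make the extension of the defining series to $L^2$ data and to distributional $f$ a bit more explicit than the paper does.
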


\begin{proof}
By direct calculation, we get
\begin{align*}
\mathcal F_{{\rm L}}(f\sL
g)(\xi)&=\int_{\Omega}\sum_{\eta\in\ind}
\widehat{f}(\eta)\widehat{g}(\eta)u_{\eta}(x)\overline{v_{\xi}(x)}dx
\\
&=\sum_{\eta\in\ind}
\widehat{f}(\eta)\widehat{g}(\eta)\int_{\Omega}u_{\eta}(x)\overline{v_{\xi}(x)}dx
\\
&=\widehat{f}(\xi)\widehat{g}(\xi).
\end{align*}
This also implies the commutativity of the convolution in view of the bijectivity of the Fourier transform.
For the associativity, let $f, g, h\in C_{{\rm
L}}^{\infty}(\overline{\Omega})$. We can argue similarly using the Fourier transform or,
by the definition and direct calculations, we have
\begin{align*}
((f\sL g) \sL h)(x) & = \sum_{\xi\in\ind}\left[\int_{\Omega}\left(\sum_{\eta\in\ind}
\widehat{f}(\eta)\widehat{g}(\eta)u_{\eta}(y)\right)\overline{v_{\xi}(y)}dy\right]\widehat{h}(\xi)u_{\xi}(x)
\\
&=\sum_{\xi\in\ind}\left[\sum_{\eta\in\ind}
\widehat{f}(\eta)\widehat{g}(\eta)\int_{\Omega}u_{\eta}(y)\overline{v_{\xi}(y)}dy\right]\widehat{h}(\xi)u_{\xi}(x)
\\
&=\sum_{\xi\in\ind}\widehat{f}(\xi)\widehat{g}(\xi)\widehat{h}(\xi)u_{\xi}(x)
\\
&=\sum_{\xi\in\ind}\widehat{f}(\xi)\left[\sum_{\eta\in\ind}\widehat{g}(\eta)\widehat{h}(\eta)\int_{\Omega}u_{\eta}(y)\overline{v_{\xi}(y)}dy\right]u_{\xi}(x)
\\
&=\sum_{\xi\in\ind}\widehat{f}(\xi)\left[\int_{\Omega}\left(\sum_{\eta\in\ind}
\widehat{g}(\eta)\widehat{h}(\eta)u_{\eta}(y)\right)\overline{v_{\xi}(y)}dy\right]u_{\xi}(x)
\\
&=(f\sL (g \sL h))(x).
\end{align*}
The associativity is proved.
For \eqref{EQ:conv1}, we notice that
$$
{\rm L}^k (f\sL g)(x)=\sum_{\xi\in\ind}\widehat{f}(\xi)\widehat{g}(\xi) \lambda_\xi^k u_{\xi}(x),
$$
and the series converges absolutely since $\widehat{g}\in\mathcal
S(\ind).$ By (BC+), the boundary conditions are also satisfied since they
are satisfied by $u_\xi$. This shows that $f\sL g\in C_{{\rm
L}}^{\infty}(\overline{\Omega}).$

For the last statement, by simple calculations we get
\begin{align*}
\int_{\Omega}|(f\sL
g)(x)|dx&\leq\int_{\Omega}\sum\limits_{\xi\in\ind}|\widehat{f}(\xi)\widehat{g}(\xi)|\,|u_{\xi}(x)|dx\\
&\leq\sum\limits_{\xi\in\ind}|\widehat{f}(\xi)|\,|\widehat{g}(\xi)|\, \|u_{\xi}\|_{L^{1}}\\
&\leq C\|f\|_{L^{2}}\,\|g\|_{L^{2}}\sup\limits_{\xi\in\ind}\|u_{\xi}\|_{L^{1}},
\end{align*}
the latter estimate by Lemma \ref{LEM: FTl2}.
Since $\Omega$ is a bounded set, by the Cauchy-Schwarz inequality we have
$$
\|u_{\xi}\|_{L^{1}}\leq |\Omega|^{1/2} \|u_{\xi}\|_{L^{2}}=|\Omega|^{1/2}
$$
for all $\xi\in\ind$, where $|\Omega|$ is the volume of $\Omega$.
This inequality implies the statement.
\end{proof}

\section{Plancherel formula, Sobolev spaces $\mathcal H^{s}_{{\rm L}}(\Omega)$, and their Fourier images}
\label{SEC:Sobolev}

In this section we discuss Sobolev spaces adapted to ${\rm L}_\Omega$ and their images
under the L-Fourier transform. We start with the $L^2$-setting, where we can recall inequalities
between $L^2$-norms of functions and sums of squares of their Fourier coefficients, see
Lemma \ref{LEM: FTl2}. However, below we show that we actually have the Plancherel
identity in a suitably defined space $l^{2}_{{\rm L}}$ and its conjugate $l^{2}_{{\rm L}^{*}}$.

\medskip
Let us denote by $$l^{2}_{{\rm L}}=l^2({\rm L})$$ 
the linear space of complex-valued functions $a$
on $\ind$ such that $\mathcal F^{-1}_{{\rm L}}a\in
L^{2}(\Omega)$, i.e. if there exists $f\in L^{2}(\Omega)$ such that $\mathcal F_{{\rm L}}f=a$.
Then the space of sequences $l^{2}_{{\rm L}}$ is a
Hilbert space with the inner product
\begin{equation}\label{EQ: InnerProd SpSeq-s}
(a,\ b)_{l^{2}_{{\rm
L}}}:=\sum_{\xi\in\ind}a(\xi)\ \overline{(\mathcal F_{{\rm
L^{\ast}}}\circ\mathcal F^{-1}_{{\rm L}}b)(\xi)}
\end{equation}
for arbitrary $a,\,b\in l^{2}_{{\rm L}}$.
The reason for this choice of the definition is the following formal calculation:
\begin{align}\label{EQ:PL-prelim} \nonumber
(a,\ b)_{l^{2}_{{\rm L}}}&
=\sum_{\xi\in\ind}a(\xi)\ \overline{(\mathcal F_{{\rm L^{\ast}}}\circ\mathcal F^{-1}_{{\rm L}}b)(\xi)}\\ \nonumber
&=\sum\limits_{\xi\in\ind
}a(\xi)\int_{\Omega}\overline{(\mathcal F^{-1}_{{\rm L}}b)(x)}u_{\xi}(x)dx\\ \nonumber
&=\int_{\Omega}\left[\sum\limits_{\xi\in\ind}a(\xi)u_{\xi}(x)\right]\overline{(\mathcal F^{-1}_{{\rm
L}}b)(x)}dx\\ \nonumber
&=\int_{\Omega}(\mathcal F^{-1}_{{\rm L}}a)(x)\overline{(\mathcal F^{-1}_{{\rm L}}b)(x)} dx\\
&=(\mathcal F^{-1}_{{\rm L}}a,\,\mathcal F^{-1}_{{\rm L}}b)_{L^{2}},
\end{align}
which implies the Hilbert space properties of the space of sequences
$l^{2}_{{\rm L}}$. The norm of $l^{2}_{{\rm L}}$ is then given by the
formula
\begin{equation}\label{EQ:l2norm}
\|a\|_{l^{2}_{{\rm L}}}=\left(\sum_{\xi\in\ind}a(\xi)\
\overline{(\mathcal F_{{\rm L^{\ast}}}\circ\mathcal F^{-1}_{{\rm
L}}a)(\xi)}\right)^{1/2}, \quad \textrm{ for all } \; a\in l^{2}_{{\rm L}}.
\end{equation}
We note that individual terms in this sum may be complex-valued but the
whole sum is real and nonnegative due to formula \eqref{EQ:PL-prelim}.

Analogously, we introduce the
Hilbert space $$l^{2}_{{\rm L^{\ast}}}=l^{2}({\rm L^{\ast}})$$
as the space of functions $a$ on $\ind$
such that $\mathcal F^{-1}_{{\rm L^{\ast}}}a\in L^{2}(\Omega)$,
with the inner product
\begin{equation}
\label{EQ: InnerProd SpSeq-s_2} (a,\ b)_{l^{2}_{{\rm
L^{\ast}}}}:=\sum_{\xi\in\ind}a(\xi)\ \overline{(\mathcal
F_{{\rm L}}\circ\mathcal F^{-1}_{{\rm L^{\ast}}}b)(\xi)}
\end{equation}
for arbitrary $a,\,b\in l^{2}_{{\rm L^{\ast}}}$. The norm of
$l^{2}_{{\rm L^{\ast}}}$ is given by the formula
$$
\|a\|_{l^{2}_{{\rm L^{\ast}}}}=\left(\sum_{\xi\in\ind}a(\xi)\
\overline{(\mathcal F_{{\rm L}}\circ\mathcal F^{-1}_{{\rm
L^{\ast}}}a)(\xi)}\right)^{1/2}
$$
for all $a\in l^{2}_{{\rm L^{\ast}}}$. The spaces of sequences
$l^{2}_{{\rm L}}$ and
$l^{2}_{{\rm L^{\ast}}}$ are thus generated by biorthogonal systems
$\{u_{\xi}\}_{\xi\in\ind}$ and $\{v_{\xi}\}_{\xi\in\ind}$.
The reason for their definition in the above forms becomes clear again
in view of the following Plancherel identity:

\begin{prop} {\rm(Plancherel's identity)}\label{PlanchId}
If $f,\,g\in L^{2}(\Omega)$ then
$\widehat{f},\,\widehat{g}\in l^{2}_{{\rm L}}, \,\,\,
\widehat{f}_{\ast},\, \widehat{g}_{\ast}\in l^{2}_{{\rm
L^{\ast}}}$, and the inner products {\rm(\ref{EQ: InnerProd SpSeq-s}),
(\ref{EQ: InnerProd SpSeq-s_2})} take the form
$$
(\widehat{f},\ \widehat{g})_{l^{2}_{{\rm L}}}=\sum_{\xi\in\ind}\widehat{f}(\xi)\ \overline{\widehat{g}_{\ast}(\xi)}
$$
and
$$
(\widehat{f}_{\ast},\ \widehat{g}_{\ast})_{l^{2}_{{\rm
L^{\ast}}}}=\sum_{\xi\in\ind}\widehat{f}_{\ast}(\xi)\
\overline{\widehat{g}(\xi)}.
$$
In particular, we have
$$
\overline{(\widehat{f},\ \widehat{g})_{l^{2}_{{\rm L}}}}=
(\widehat{g}_{\ast},\ \widehat{f}_{\ast})_{l^{2}_{{\rm
L^{\ast}}}}.
$$
The Parseval identity takes the form
\begin{equation}\label{Parseval}
(f,g)_{L^{2}}=(\widehat{f},\widehat{g})_{l^{2}_{{\rm
L}}}=\sum_{\xi\in\ind}\widehat{f}(\xi)\ \overline{\widehat{g}_{\ast}(\xi)}.
\end{equation}
Furthermore, for any $f\in L^{2}(\Omega)$, we have
$\widehat{f}\in l^{2}_{{\rm L}}$, $\widehat{f}_{\ast}\in l^{2}_{{\rm
L^{\ast}}}$, and
\begin{equation}
\label{Planch} \|f\|_{L^{2}}=\|\widehat{f}\|_{l^{2}_{{\rm
L}}}=\|\widehat{f}_{\ast}\|_{l^{2}_{{\rm L^{\ast}}}}.
\end{equation}
\end{prop}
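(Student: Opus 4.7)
The plan is to work through the five assertions in order, with the membership claims coming first and everything else following from combining the definition of the sequence-space inner products with the biorthogonal expansions in $L^2(\Omega)$.

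First I would establish that $\widehat f\in l^2_{\rm L}$ and $\widehat f_*\in l^2_{{\rm L}^*}$ for $f\in L^2(\Omega)$. The coefficients $\widehat f(\xi)=(f,v_\xi)_{L^2}$ and $\widehat f_*(\xi)=(f,u_\xi)_{L^2}$ are well-defined by Cauchy--Schwarz, as in \eqref{EQ: Ineq1}. By Assumption \ref{Assumption_3} together with biorthogonality \eqref{BiorthProp}, the expansion $f=\sum_{\xi\in\ind}\widehat f(\xi)u_\xi$ converges in $L^2(\Omega)$, so that $\mathcal F_{\rm L}^{-1}\widehat f=f\in L^2(\Omega)$, and by the definition of $l^2_{\rm L}$ this gives $\widehat f\in l^2_{\rm L}$. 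The same argument with the biorthogonal basis $\{v_\xi\}$ yields $\widehat f_*\in l^2_{{\rm L}^*}$.

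Second, I would compute $(\widehat f,\widehat g)_{l^2_{\rm L}}$ directly from \eqref{EQ: InnerProd SpSeq-s}. Using the identifications $\mathcal F_{\rm L}^{-1}\widehat g=g$ from Step 1 and $\mathcal F_{{\rm L}^*}g=\widehat g_*$, one obtains
\[
(\widehat f,\widehat g)_{l^2_{\rm L}}
=\sum_{\xi\in\ind}\widehat f(\xi)\,\overline{(\mathcal F_{{\rm L}^*}\circ\mathcal F_{\rm L}^{-1}\widehat g)(\xi)}
=\sum_{\xi\in\ind}\widehat f(\xi)\,\overline{\widehat g_*(\xi)},
\]
and the symmetric computation gives the stated formula for $(\widehat f_*,\widehat g_*)_{l^2_{{\rm L}^*}}$. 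The conjugation relation then follows by inspection, exchanging the roles of $f$ and $g$.

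Third, the Parseval identity is obtained by expanding $f=\sum_\xi\widehat f(\xi)u_\xi$ in $L^2(\Omega)$ and using continuity of the $L^2$ inner product: since the partial sums converge in $L^2$,
\[
(f,g)_{L^2}=\sum_{\xi\in\ind}\widehat f(\xi)\,(u_\xi,g)_{L^2}
=\sum_{\xi\in\ind}\widehat f(\xi)\,\overline{\widehat g_*(\xi)},
\]
which matches the formula from Step 2, so $(f,g)_{L^2}=(\widehat f,\widehat g)_{l^2_{\rm L}}$. Specialising $g=f$ gives $\|f\|_{L^2}=\|\widehat f\|_{l^2_{\rm L}}$, and the analogous argument with the roles of $\{u_\xi\}$ and $\{v_\xi\}$ swapped gives $\|f\|_{L^2}=\|\widehat f_*\|_{l^2_{{\rm L}^*}}$.

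The only step that requires care is the interchange of summation and inner product in the Parseval computation, but this is immediate from continuity of $(\cdot,g)_{L^2}$ together with the $L^2$-convergence of the biorthogonal expansion guaranteed by Assumption \ref{Assumption_3}; there is no genuine obstacle, since the definitions of $l^2_{\rm L}$ and $l^2_{{\rm L}^*}$ were arranged precisely so that the formal calculation \eqref{EQ:PL-prelim} becomes a rigorous identity once membership in these spaces is established.
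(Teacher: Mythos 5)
Your proof is correct and follows essentially the same route as the paper: you identify $(\mathcal F_{{\rm L}^*}\circ\mathcal F_{\rm L}^{-1})\widehat g=\widehat g_*$ from the basis property (Assumption \ref{Assumption_3}) and read the inner-product formulas directly off the definitions \eqref{EQ: InnerProd SpSeq-s} and \eqref{EQ: InnerProd SpSeq-s_2}, then derive Parseval from the $L^2$-convergent biorthogonal expansion. The only cosmetic divergence is in the Parseval step, where you expand only $f$ and use continuity of $(\cdot,g)_{L^2}$ to pull out $(u_\xi,g)_{L^2}=\overline{\widehat g_*(\xi)}$, whereas the paper expands both $f$ and $g$ in their respective series and invokes the biorthogonality relations $(u_\xi,v_\eta)_{L^2}=\delta_{\xi\eta}$; the two calculations are equivalent, and yours is marginally more economical since it avoids the double sum.
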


\begin{proof}
By the definition we get
\begin{align*}
(\mathcal F_{{\rm L^{\ast}}}\circ\mathcal F^{-1}_{{\rm
L}}\widehat{g})(\xi)=\left(\mathcal
F_{{\rm L^{\ast}}}g\right)(\xi)=\widehat{g}_{\ast}(\xi)
\end{align*}
and
\begin{align*}
(\mathcal F_{{\rm L}}\circ\mathcal F^{-1}_{{\rm
L^{\ast}}}\widehat{g}_{\ast})(\xi)=\left(\mathcal
F_{{\rm L}}g\right)(\xi)=\widehat{g}(\xi).
\end{align*}
Hence it follows that
$$
(\widehat{f},\ \widehat{g})_{l^{2}_{{\rm L}}}=\sum_{\xi\in\ind}\widehat{f}(\xi)\ \overline{(\mathcal F_{{\rm
L^{\ast}}}\circ\mathcal F^{-1}_{{\rm
L}}\widehat{g})(\xi)}=\sum_{\xi\in\ind}\widehat{f}(\xi)\
\overline{\widehat{g}_{\ast}(\xi)}
$$
and
$$
(\widehat{f}_{\ast},\ \widehat{g}_{\ast})_{l^{2}_{{\rm
L^{\ast}}}}=\sum_{\xi\in\ind}\widehat{f}_{\ast}(\xi)\
\overline{(\mathcal F_{{\rm L}}\circ\mathcal F^{-1}_{{\rm
L^{\ast}}}\widehat{g}_{\ast})(\xi)}=\sum_{\xi\in\ind}\widehat{f}_{\ast}(\xi)\ \overline{\widehat{g}(\xi)}.
$$
To show Parseval's identity \eqref{Parseval}, using these properties and the
biorthogonality of $u_\xi$'s to $v_\eta$'s,
we can write
\begin{multline*}
(f,g)_{L^{2}}
=\left(\sum_{\xi\in\ind}\widehat{f}(\xi)u_{\xi} \ , \
\sum_{\eta\in\ind}\widehat{g}_{\ast}(\eta)v_{\eta}\right)\\
=\sum_{\xi\in\ind}\sum_{\eta\in\ind}\widehat{f}(\xi)\overline{\widehat{g}_{\ast}(\eta)}\left(u_{\xi},
\ v_{\eta}\right)_{L^{2}}
=\sum_{\xi\in\ind}\widehat{f}(\xi)\overline{\widehat{g}_{\ast}(\xi)}=(\widehat{f},\widehat{g})_{l^{2}_{{\rm
L}}},
\end{multline*}
proving \eqref{Parseval}.
Taking $f=g$, we get
\begin{equation*}
\|f\|_{L^{2}}^{2}=(f,f)_{L^{2}}=
\sum_{\xi\in\ind}\widehat{f}(\xi)\overline{\widehat{f}_{\ast}(\xi)}=(\widehat{f},\widehat{f})_{l^{2}_{{\rm
L}}}=\|\widehat{f}\|_{l^{2}_{{\rm L}}}^{2},
\end{equation*}
proving the first equality in \eqref{Planch}.
Then, by checking that
\begin{align*}
(f,f)_{L^{2}}=\overline{(f,f)}_{L^{2}}&=\sum_{\xi\in\ind}
\overline{\widehat{f}(\xi)}\widehat{f}_{\ast}(\xi)=\sum_{\xi\in\ind}
\widehat{f}_{\ast}(\xi)\overline{\widehat{f}(\xi)}=(\widehat{f}_{\ast},\widehat{f}_{\ast})_{l^{2}_{{\rm
L^{\ast}}}}=\|\widehat{f}_{\ast}\|_{l^{2}_{{\rm L^{\ast}}}}^{2},
\end{align*}
the proofs of \eqref{Planch} and of Proposition \ref{PlanchId} are complete.
\end{proof}

Now we introduce Sobolev spaces generated by the operator ${\rm L}_{\Omega}$:

\begin{defn}[Sobolev spaces $\mathcal H^{s}_{{\rm L}}(\Omega)$] \label{SobSp}
For $f\in\mathcal D'_{{\rm L}}(\Omega)\cap \mathcal D'_{{\rm L}^{*}}(\Omega)$ and $s\in\mathbb R$, we say that
$$f\in\mathcal H^{s}_{{\rm L}}(\Omega)\; \textrm{ if and only if }\;
\langle\xi\rangle^{s}\widehat{f}(\xi)\in l^{2}_{{\rm L}}.$$
We define the norm on $\mathcal H^{s}_{{\rm L}}(\Omega)$ by
\begin{equation}\label{SobNorm}
\|f\|_{\mathcal H^{s}_{{\rm
L}}(\Omega)}:=\left(\sum_{\xi\in\ind}
\langle\xi\rangle^{2s}\widehat{f}(\xi)\overline{\widehat{f}_{\ast}(\xi)}\right)^{1/2}.
\end{equation}
The Sobolev space $\mathcal H^{s}_{{\rm L}}(\Omega)$ is then the
space of ${\rm L}$-distributions $f$ for which we have
$\|f\|_{\mathcal H^{s}_{{\rm L}}(\Omega)}<\infty$. Similarly,
we can define the
space $\mathcal H^{s}_{{\rm L^{\ast}}}(\Omega)$ by the
condition
\begin{equation}\label{SobNorm2}
\|f\|_{\mathcal H^{s}_{{\rm
L^{\ast}}}(\Omega)}:=\left(\sum_{\xi\in\ind}\langle\xi\rangle^{2s}\widehat{f}_{\ast}(\xi)\overline{\widehat{f}(\xi)}\right)^{1/2}<\infty.
\end{equation}
\end{defn}
We note that the expressions in \eqref{SobNorm} and 
\eqref{SobNorm2} are well-defined since the sum
$$
\sum_{\xi\in\ind}
\langle\xi\rangle^{2s}\widehat{f}(\xi)\overline{\widehat{f}_{\ast}(\xi)}=
(\langle\xi\rangle^{s}\widehat{f}(\xi),\langle\xi\rangle^{s}\widehat{f}(\xi))_{l^{2}_{\rm L}}\geq 0
$$
is real and non-negative. 
Consequently, since we can write the sum in \eqref{SobNorm2} as the
complex conjugate of that in  \eqref{SobNorm}, and with both being real,
we see that the spaces $\mathcal H^{s}_{{\rm L}}(\Omega)$ and 
$\mathcal H^{s}_{{\rm L^{\ast}}}(\Omega)$ coincide as sets. Moreover, we have

\begin{prop}\label{SobHilSpace}
For every $s\in\mathbb R$, the Sobolev space
$\mathcal H^{s}_{{\rm L}}(\Omega)$ is a Hilbert space with the
inner product
$$
(f,\ g)_{\mathcal H^{s}_{{\rm L}}(\Omega)}:=\sum_{\xi\in\ind
}\langle\xi\rangle^{2s}\widehat{f}(\xi)\overline{\widehat{g}_{\ast}(\xi)}.
$$
Similarly,
the Sobolev space
$\mathcal H^{s}_{{\rm L^{\ast}}}(\Omega)$ is a Hilbert space with
the inner product
$$
(f,\ g)_{\mathcal H^{s}_{{\rm
L^{\ast}}}(\Omega)}:=\sum_{\xi\in\ind}\langle\xi\rangle^{2s}\widehat{f}_{\ast}(\xi)\overline{\widehat{g}(\xi)}.
$$
For every $s\in\mathbb R$, the Sobolev spaces $\mathcal
H^{s}(\Omega)$, $\mathcal H^{s}_{{\rm L}}(\Omega)$,
and $\mathcal H^{s}_{{\rm L}^*}(\Omega)$ are
isometrically isomorphic.
\end{prop}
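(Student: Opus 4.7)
The plan is to transfer the Hilbert structure and completeness of $\mathcal H^s_{\rm L}(\Omega)$ from a suitable weighted sequence space via the ${\rm L}$-Fourier transform, and then use the same mechanism to exhibit the three isometries. The key device is that $\mathcal F_{\rm L}$ identifies $\mathcal H^s_{\rm L}(\Omega)$ with an $\langle\xi\rangle^s$-weighted version of $l^2_{\rm L}$, which by Proposition \ref{PlanchId} is isometric to $L^2(\Omega)$.

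To set up the inner product, sesquilinearity of $(\cdot,\cdot)_{\mathcal H^s_{\rm L}}$ is immediate from the linearity of $\mathcal F_{\rm L}$ and $\mathcal F_{{\rm L}^{\ast}}$ and from the sesquilinear shape of the defining sum. For positivity and reality I invoke the remark printed just after \eqref{SobNorm2}, which identifies
$$\sum_{\xi\in\ind}\langle\xi\rangle^{2s}\widehat f(\xi)\overline{\widehat f_{\ast}(\xi)}=\bigl(\langle\xi\rangle^s\widehat f,\,\langle\xi\rangle^s\widehat f\bigr)_{l^2_{\rm L}},$$
and then appeals to the derivation \eqref{EQ:PL-prelim} showing that every $l^2_{\rm L}$-norm squared equals an $L^2(\Omega)$-norm squared. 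Non-degeneracy follows because $\|f\|_{\mathcal H^s_{\rm L}}=0$ forces $\langle\xi\rangle^s\widehat f(\xi)=0$ for every $\xi$, whence $\widehat f\equiv 0$, and Proposition \ref{LEM: FTinS} (the Fourier inversion) then gives $f=0$ in $\mathcal D'_{\rm L}(\Omega)$.

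For completeness I take a Cauchy sequence $(f_n)\subset\mathcal H^s_{\rm L}(\Omega)$. The sequences $a_n:=(\langle\xi\rangle^s\widehat{f_n}(\xi))_{\xi\in\ind}$ are Cauchy in $l^2_{\rm L}$; since $\mathcal F_{\rm L}^{-1}:l^2_{\rm L}\to L^2(\Omega)$ is an isometric isomorphism by Proposition \ref{PlanchId}, the space $l^2_{\rm L}$ is complete and $a_n\to a$ for some $a\in l^2_{\rm L}$. The sequence $\widehat f(\xi):=\langle\xi\rangle^{-s}a(\xi)$ has at most polynomial growth in $\xi$, hence lies in $\mathcal S'(\ind)$; by the distributional formula \eqref{EQ: FTofDistr} it is the ${\rm L}$-Fourier transform of some $f\in\mathcal D'_{\rm L}(\Omega)$. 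By construction $f\in\mathcal H^s_{\rm L}(\Omega)$ and $f_n\to f$ in the norm. That $f$ simultaneously lies in $\mathcal D'_{{\rm L}^{\ast}}(\Omega)$ follows by a symmetric argument on the conjugate side, meeting the requirement of Definition \ref{SobSp}.

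The three isomorphisms then drop out quickly. The map $\Phi:f\mapsto(\langle\xi\rangle^s\widehat f(\xi))_{\xi\in\ind}$ furnishes an isometric bijection $\mathcal H^s_{\rm L}(\Omega)\to\mathcal H^s(\Omega)$, with $\mathcal H^s(\Omega)$ interpreted as the corresponding $\langle\xi\rangle^s$-weighted copy of $l^2_{\rm L}$; its inverse is $\mathcal F_{\rm L}^{-1}$ composed with multiplication by $\langle\xi\rangle^{-s}$. For $\mathcal H^s_{\rm L}(\Omega)\cong\mathcal H^s_{{\rm L}^{\ast}}(\Omega)$ I note that the defining sums \eqref{SobNorm} and \eqref{SobNorm2} are complex conjugates of each other and, being real, coincide, so the two spaces agree as sets with identical norms and the identity map is the isometry. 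The main obstacle is the positivity step: establishing rigorously that $\|f\|^2_{\mathcal H^s_{\rm L}}$ equals a bona fide non-negative $l^2_{\rm L}$-norm squared, because biorthogonality rather than orthonormality means that multiplication by the weight $\langle\xi\rangle^s$ on the Fourier side does not trivially intertwine with the composition $\mathcal F_{{\rm L}^{\ast}}\circ\mathcal F_{\rm L}^{-1}$ hidden inside the $l^2_{\rm L}$-norm formula, so this identification demands the careful biorthogonal bookkeeping foreshadowed in the paper.
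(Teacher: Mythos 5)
Your proof follows essentially the same route as the paper's: in both cases the Hilbert--space structure and completeness are transferred from $L^2(\Omega)=\mathcal H^0_{\rm L}(\Omega)$ via multiplication by the weight $\langle\xi\rangle^s$ on the ${\rm L}$-Fourier side (the paper phrases this through the map $\varphi_s f=\sum_\xi\langle\xi\rangle^{-s}\widehat f(\xi)u_\xi$, you through the map $\Phi:f\mapsto\langle\cdot\rangle^s\widehat f$). The organisation through $l^2_{\rm L}$ and the Plancherel identity is a reasonable and slightly more explicit way of carrying out the same idea. The non-degeneracy step via Fourier inversion is fine, and the completeness argument is correctly structured once the isometry is granted.

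The obstacle you flag at the end is, however, the crux and you are right to call attention to it, because it is not resolved either in your argument or, in fact, in the paper's. The identity invoked after \eqref{SobNorm2}, namely
$\sum_{\xi}\langle\xi\rangle^{2s}\widehat f(\xi)\overline{\widehat f_{\ast}(\xi)}=\bigl(\langle\cdot\rangle^s\widehat f,\,\langle\cdot\rangle^s\widehat f\bigr)_{l^2_{\rm L}}$,
does not follow from biorthogonality alone. Set $F:=\mathcal F_{\rm L}^{-1}(\langle\cdot\rangle^s\widehat f)=\sum_\eta\langle\eta\rangle^s\widehat f(\eta)u_\eta$. Biorthogonality gives $\widehat F(\xi)=\langle\xi\rangle^s\widehat f(\xi)$, but the conjugate coefficients are
$\widehat F_\ast(\xi)=(F,u_\xi)_{L^2}=\sum_\eta\langle\eta\rangle^s\widehat f(\eta)(u_\eta,u_\xi)_{L^2}$,
which differs from $\langle\xi\rangle^s\widehat f_\ast(\xi)=\sum_\eta\langle\xi\rangle^s\widehat f(\eta)(u_\eta,u_\xi)_{L^2}$ unless the $u_\xi$ are mutually orthogonal (which they are not when ${\rm L}$ is non-self-adjoint, e.g.\ Example \ref{Example1} with $h\ne1$). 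In Gram-matrix language, with $G_{\xi\eta}:=(u_\eta,u_\xi)_{L^2}$ and $W$ the diagonal weight $\langle\xi\rangle$, the expression in \eqref{SobNorm} is the quadratic form $\langle c,W^{2s}Gc\rangle$ while $\|\langle\cdot\rangle^s\widehat f\|_{l^2_{\rm L}}^2$ is $\langle c,W^sGW^sc\rangle$; these agree only when $W$ and $G$ commute. Consequently neither your $\Phi$ nor the paper's $\varphi_s$ is an isometry merely by inspection of the defining formulae, and Parseval alone does not guarantee that the right-hand side of \eqref{SobNorm} is real and non-negative for biorthogonal systems. Closing this step would require a further argument (e.g.\ replacing \eqref{SobNorm} by the $l^2_{\rm L}$-norm of $\langle\cdot\rangle^s\widehat f$ directly, or proving for Riesz bases that the two quadratic forms coincide). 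The paper asserts the isometry of $\varphi_s$ without verification, so your proof, which flags this as the sticking point rather than suppressing it, is actually the more honest account of where the work resides.
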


\begin{proof}
The spaces $\mathcal H^{0}_{{\rm L}}(\Omega)$ and $\mathcal
H^{s}_{{\rm L}}(\Omega)$ are isometrically isomorphic by the
canonical isomorphism $$\varphi_{s}:\mathcal H^{0}_{{\rm
L}}(\Omega)\rightarrow\mathcal H^{s}_{{\rm L}}(\Omega),$$ defined
by
$$
\varphi_{s}f(x):=\sum_{\xi\in\ind}\langle\xi\rangle^{-s}\widehat{f}(\xi)u_{\xi}(x).
$$
Indeed, $\varphi_{s}$ is a linear isometry between $\mathcal
H^{t}_{{\rm L}}(\Omega)$ and $\mathcal H^{t+s}_{{\rm L}}(\Omega)$
for every $s\in\mathbb R$, and it is true that
$$\varphi_{s_{1}}\varphi_{s_{2}}=\varphi_{s_{1}+s_{2}} \; \textrm{ and }\;
\varphi_{s}^{-1}=\varphi_{-s}.$$ Then the completeness of
$L^{2}(\Omega)=\mathcal H^{0}_{{\rm L}}(\Omega)$ is transferred to
that of $\mathcal H^{s}_{{\rm L}}(\Omega)$ for every $s\in\mathbb R$.

As $L^{2}(\Omega)=\mathcal H^{0}_{{\rm L}}(\Omega)$, the
spaces $L^{2}(\Omega)$ and $\mathcal H^{s}_{{\rm L}}(\Omega)$ are
isometrically isomorphic for every $s\in\mathbb R$. Hence the
Sobolev spaces $\mathcal H^{s}(\Omega)$ and $\mathcal H^{s}_{{\rm
L}}(\Omega)$ are also isometrically isomorphic for every
$s\in\mathbb R$. The arguments for the space $\mathcal H^{s}_{{\rm
L}^*}(\Omega)$ are all similar.
\end{proof}

\section{Spaces $l^{p}({\rm L})$ and $l^{p}({\rm L}^*)$}
\label{SEC:lp}

In this section we describe the $p$-Lebesgue versions of the spaces of Fourier coefficients.
These spaces can be considered as the extension of the usual $l^p$ spaces on the discrete set
$\ind$ adapted to the fact that we are dealing with biorthogonal systems.

\begin{defn}
Thus, we introduce the spaces $l^{p}_{\rm L}=l^{p}({\rm L})$ as the spaces of all
$a\in\mathcal S'(\ind)$ such that
\begin{equation}\label{EQ:norm1}
\|a\|_{l^{p}({\rm L})}:=\left(\sum_{\xi\in\ind}| a(\xi)|^{p}
\|u_{\xi}\|^{2-p}_{L^{\infty}(\Omega)} \right)^{1/p}<\infty,\quad \textrm{ for }\; 1\leq p\leq2,
\end{equation}
and
\begin{equation}\label{EQ:norm2}
\|a\|_{l^{p}({\rm L})}:=\left(\sum_{\xi\in\ind}| a(\xi)|^{p}
\|v_{\xi}\|^{2-p}_{L^{\infty}(\Omega)} \right)^{1/p}<\infty,\quad \textrm{ for }\; 2\leq p<\infty,
\end{equation}
and, for $p=\infty$,
$$
\|a\|_{l^{\infty}({\rm L})}:=\sup_{\xi\in\ind}\left( |a(\xi)|\cdot
\|v_{\xi}\|^{-1}_{L^{\infty}(\Omega)}\right)<\infty.
$$
\end{defn}

\begin{rem}\label{REM:lps}
We note that in the case of $p=2$, we have already defined the space $l^{2}({\rm L})$
by the norm \eqref{EQ:l2norm}. There is no problem with this since the norms
\eqref{EQ:norm1}-\eqref{EQ:norm2} with $p=2$ are equivalent to that in
\eqref{EQ:l2norm}. Indeed, by Lemma \ref{LEM: FTl2} the first one gives a 
homeomorphism between $l^{p}({\rm L})$ with $p=2$ just defined and $L^{2}(\Omega)$ while
the space $l^{2}({\rm L})$ defined by \eqref{EQ:l2norm} is isometrically isomorphic
to $L^{2}(\Omega)$ by the Plancherel identity in Proposition \ref{PlanchId}.
Therefore, both norms lead to the same space which we denote by $l^{2}({\rm L})$.
The norms \eqref{EQ:norm1}-\eqref{EQ:norm2} with $p=2$ and the one
in \eqref{EQ:l2norm} are equivalent, but there are advantages in using both of them.
Thus, the norms \eqref{EQ:norm1}-\eqref{EQ:norm2} allow us to view $l^{2}({\rm L})$
as a member of the scale of spaces $l^{p}({\rm L})$ for $1\leq p\leq \infty$ with
subsequent functional analytic properties, while the norm \eqref{EQ:l2norm} is the one
for which the Plancherel identity \eqref{Planch} holds. 
\end{rem}

Analogously, we also introduce spaces $l^{p}_{{\rm L^{\ast}}}=l^{p}({\rm L^{\ast}})$ as the spaces of
all $b\in\mathcal S'(\ind)$ such that the following norms are finite:
$$
\|b\|_{l^{p}({\rm L^{\ast}})}=\left(\sum_{\xi\in\ind}|
b(\xi)|^{p} \|v_{\xi}\|^{2-p}_{L^{\infty}(\Omega)} \right)^{1/p},\quad \textrm{ for }\; 1\leq p\leq2,
$$
$$
\|b\|_{l^{p}({\rm L^{\ast}})}=\left(\sum_{\xi\in\ind}|
b(\xi)|^{p} \|u_{\xi}\|^{2-p}_{L^{\infty}(\Omega)}
\right)^{1/p},\quad \textrm{ for }\; 2\leq p<\infty,
$$
$$
\|b\|_{l^{\infty}({\rm L^{\ast}})}=\sup_{\xi\in\ind}\left(|b(\xi)|\cdot \|u_{\xi}\|^{-1}_{L^{\infty}(\Omega)}\right).
$$
Before we discuss several basic properties of the spaces $l^{p}({\rm L})$, we
recall a useful fact on the interpolation of weighted spaces from Bergh and L\"ofstr\"om
\cite[Theorem 5.5.1]{Bergh-Lofstrom:BOOK-Interpolation-spaces}:

\begin{theorem}[Interpolation of weighted spaces] \label{TH: IWS}
Let us write
$d\mu_{0}(x)=\omega_{0}(x)d\mu(x),$
$d\mu_{1}(x)=\omega_{1}(x)d\mu(x),$ and write
$L^{p}(\omega)=L^{p}(\omega d\mu)$ for the weight $\omega$.
Suppose that $0<p_{0}, p_{1}<\infty$. Then
$$
(L^{p_{0}}(\omega_{0}), L^{p_{1}}(\omega_{1}))_{\theta,
p}=L^{p}(\omega),
$$
where $0<\theta<1$, $\frac{1}{p}=\frac{1-\theta}{p_{0}}+\frac{\theta}{p_{1}}$, and
$\omega=\omega_{0}^{\frac{p(1-\theta)}{p_{0}}}\omega_{1}^{\frac{p\theta}{p_{1}}}$.
\end{theorem}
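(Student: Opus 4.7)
The plan is to establish this interpolation identity via the K-method, reducing the two-weight case to a one-weight case by a change of density. First, for any positive measurable $h$ and any weight $\eta$, multiplication $M_h : f\mapsto hf$ is an isometric isomorphism $L^p(h^{-p}\eta\,d\mu)\to L^p(\eta\,d\mu)$. Taking $h=\omega_0^{-1/p_0}$ identifies the couple $(L^{p_0}(\omega_0),L^{p_1}(\omega_1))$ isometrically with $(L^{p_0}(d\mu),L^{p_1}(\nu\,d\mu))$ for $\nu:=\omega_0^{-p_1/p_0}\omega_1$. Since real interpolation is a functor on the category of Banach couples, the corresponding $(\theta,p)$-interpolation spaces are identified by the same isometry, so it suffices to prove the claim in the case $\omega_0\equiv 1$.

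Second, I would compute the K-functional
$$K(t,f)=\inf_{f=f_0+f_1}\bigl(\|f_0\|_{L^{p_0}}+t\,\|f_1\|_{L^{p_1}(\nu)}\bigr)$$
by a pointwise optimal splitting: at each point $x$, one minimises $|a|^{p_0}+t^{p_1}\nu(x)|f(x)-a|^{p_1}$ over $a\in\mathbb{C}$, and the minimiser is realised by truncating $f(x)$ at a threshold depending on $\nu(x)$ and $t$. This yields the relevant Holmstedt-type formula, expressing $K(t,f)$ in terms of a weighted distribution functional of $f$. Substituting into
$$\|f\|_{\theta,p}=\Bigl(\int_0^\infty\bigl(t^{-\theta}K(t,f)\bigr)^p\,\frac{dt}{t}\Bigr)^{1/p}$$
and interchanging the $t$- and $x$-integrals via Fubini, the inner $t$-integral evaluates to a Beta-function constant, leaving $\|f\|_{L^p(\nu^{p\theta/p_1}\,d\mu)}$ up to equivalence.

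Transporting back through $M_{h^{-1}}$, the resulting weight becomes
$$\omega_0^{p/p_0}\,\nu^{p\theta/p_1}=\omega_0^{p/p_0-p\theta/p_0}\,\omega_1^{p\theta/p_1}=\omega_0^{p(1-\theta)/p_0}\omega_1^{p\theta/p_1}=\omega,$$
where the second equality uses the exponent relation $1/p=(1-\theta)/p_0+\theta/p_1$. The main obstacle is the pointwise K-functional computation: one must verify the minimiser rigorously and check that the truncation decomposition lies in the correct spaces for a.e.~$x$, so that Holmstedt's formula holds with constants depending only on $p_0$, $p_1$, and $\theta$. Once that is in hand, the Fubini calculation and the exponent bookkeeping above are routine.
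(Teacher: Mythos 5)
The paper does not prove this statement at all: it is quoted as a known result from Bergh and L\"ofstr\"om \cite[Theorem 5.5.1]{Bergh-Lofstrom:BOOK-Interpolation-spaces}, and is used in the sequel only through its corollary for the discrete couple $(l^{1},l^{2})$. So the comparison is really between your sketch and the textbook proof, and your outline is essentially that proof: reduce to one weight by a change of density, then compute the $K$-functional by a pointwise optimal splitting and finish with Fubini. Your endgame algebra (the weight $\nu=\omega_{0}^{-p_{1}/p_{0}}\omega_{1}$ and the final identification $\omega_{0}^{p/p_{0}}\nu^{p\theta/p_{1}}=\omega$) is correct. One typo-level slip: $\|hf\|_{L^{p}(\eta\,d\mu)}=\|f\|_{L^{p}(|h|^{p}\eta\,d\mu)}$, so $M_{h}$ maps $L^{p}(h^{p}\eta\,d\mu)$ onto $L^{p}(\eta\,d\mu)$, not $L^{p}(h^{-p}\eta\,d\mu)$; the map that carries $L^{p_{0}}(\omega_{0})$ onto $L^{p_{0}}(d\mu)$ is $M_{\omega_{0}^{1/p_{0}}}=M_{h^{-1}}$. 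The direction is muddled in your write-up even though the bookkeeping comes out right.

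The genuine gap is in the middle step, and you have half-acknowledged it yourself. The functional you actually minimise pointwise, $\int\min_{a}\bigl(|a|^{p_{0}}+t^{p_{1}}\nu(x)|f(x)-a|^{p_{1}}\bigr)d\mu(x)$, is the $K$-functional of the \emph{powered} couple, i.e.\ of $\inf\bigl(\|f_{0}\|_{L^{p_{0}}}^{p_{0}}+t^{p_{1}}\|f_{1}\|_{L^{p_{1}}(\nu)}^{p_{1}}\bigr)$; the true $K(t,f)$ is a sum of norms and its infimum does \emph{not} localise pointwise. Passing between the two and matching the interpolation parameters is exactly the content of the power theorem (Theorem 3.11.6 in Bergh--L\"ofstr\"om) or of Holmstedt's direct argument, and this is where all the substance of the proof lives --- it is not a verification to be left for later. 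Once that bridge is in place the rest is indeed routine: the pointwise minimum is comparable, with constants depending only on $p_{0},p_{1}$, to $\min\bigl(|f(x)|^{p_{0}},\,t^{p_{1}}\nu(x)|f(x)|^{p_{1}}\bigr)$ (test $a=0$ and $a=f(x)$ for the upper bound; use $\max(|a|,|f(x)-a|)\geq|f(x)|/2$ for the lower bound), and the $t$-integral produces the Beta-function constant and the weight $\nu^{p\theta/p_{1}}$ precisely because $\frac1p=\frac{1-\theta}{p_{0}}+\frac{\theta}{p_{1}}$. So: right strategy, correct reductions at both ends, but the pivotal equivalence is named rather than proved.
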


From this it is easy to check that we obtain:

\begin{corollary}[Interpolation of $l^{p}({\rm L})$ and $l^{p}({\rm
L}^{\ast})$ spaces]
For $1\leq p\leq2$, we have
$$
(l^{1}({\rm L}), l^{2}({\rm L}))_{\theta,p}=l^{p}({\rm L}),
$$
$$
(l^{1}({\rm L}^{\ast}), l^{2}({\rm
L}^{\ast}))_{\theta,p}=l^{p}({\rm L}^{\ast}),
$$
where $0<\theta<1$ and $p=\frac{2}{2-\theta}$.
\end{corollary}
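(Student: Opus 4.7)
The plan is a direct application of Theorem \ref{TH: IWS} with $\ind$ equipped with the counting measure and with the weights read off from the defining formulas \eqref{EQ:norm1}--\eqref{EQ:norm2}. First I would set up the identification: let $d\mu$ be the counting measure on $\ind$, and observe that for $1\leq p\leq 2$ the norm
$$
\|a\|_{l^{p}({\rm L})}=\left(\sum_{\xi\in\ind}|a(\xi)|^{p}\|u_{\xi}\|_{L^\infty(\Omega)}^{2-p}\right)^{1/p}
$$
coincides with $\|a\|_{L^{p}(\omega_{p})}$ for $\omega_{p}(\xi):=\|u_{\xi}\|_{L^\infty(\Omega)}^{2-p}$. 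In particular $\omega_{1}(\xi)=\|u_{\xi}\|_{L^\infty(\Omega)}$ and $\omega_{2}(\xi)\equiv 1$, so $l^{1}({\rm L})=L^{1}(\omega_{1})$ and $l^{2}({\rm L})=L^{2}(\omega_{2})$ in the notation of Theorem \ref{TH: IWS}.

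Next I would apply Theorem \ref{TH: IWS} with $p_{0}=1$, $p_{1}=2$ and the weights $\omega_{0}=\omega_{1}$, $\omega_{1}=\omega_{2}$ (in the theorem's indexing). Given $0<\theta<1$, the theorem produces
$$
(l^{1}({\rm L}),l^{2}({\rm L}))_{\theta,p}=L^{p}(\omega)\quad\textrm{with } \tfrac{1}{p}=\tfrac{1-\theta}{1}+\tfrac{\theta}{2}=\tfrac{2-\theta}{2},
$$
which gives exactly $p=\tfrac{2}{2-\theta}$, and
$$
\omega(\xi)=\omega_{1}(\xi)^{p(1-\theta)}\,\omega_{2}(\xi)^{p\theta/2}=\|u_{\xi}\|_{L^\infty(\Omega)}^{p(1-\theta)}.
$$
The final step is the verification that $p(1-\theta)=2-p$: indeed $\theta=2-2/p$ from the relation $1/p=(2-\theta)/2$, so $p(1-\theta)=p-p\theta=p-(2p-2)=2-p$. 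Hence $\omega(\xi)=\|u_{\xi}\|_{L^\infty(\Omega)}^{2-p}=\omega_{p}(\xi)$, so the interpolation space is precisely $l^{p}({\rm L})$.

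The argument for $l^{p}({\rm L}^{*})$ is identical, with the system $\{u_{\xi}\}$ replaced by $\{v_{\xi}\}$ throughout. I do not expect a genuine obstacle here; the only thing to be a bit careful about is the bookkeeping in Remark \ref{REM:lps} concerning the two equivalent norms on $l^{2}({\rm L})$, but the interpolation is taken for the norm \eqref{EQ:norm2} with $p=2$, so the identification with $L^{2}(\omega_{2})=L^{2}(d\mu)$ used above is exactly the one compatible with the $p=1$ endpoint, and no additional adjustment is needed.
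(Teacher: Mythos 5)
Your proposal is correct and is precisely the argument the paper intends: the corollary is stated as an immediate consequence of the Bergh--L\"ofstr\"om weighted interpolation theorem, and your identification of $l^{p}({\rm L})$ with $L^{p}(\omega_{p})$ for the counting measure, together with the computation $p(1-\theta)=2-p$, is exactly the "easy check" the paper leaves to the reader. The remark about the two equivalent norms on $l^{2}({\rm L})$ is the right point to flag and you resolve it correctly.
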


\begin{rem}
The reason that the interpolation above is restricted to $1\leq p\leq2$ is that the definition of
$l^p$-spaces changes when we pass $p=2$, in the sense that we use different families of
biorthogonal systems $u_\xi$ and $v_\xi$ for $p<2$ and for $p>2$. We note that if the boundary value problem
${\rm L}_\Omega={\rm L}_\Omega^*$ is self-adjoint, so that we can take
$u_\xi=v_\xi$ for all $\xi\in\ind$, then the scales $l^{p}({\rm L})$ and $l^{p}({\rm
L}^{\ast})$ coincide and satisfy interpolation properties for all $1\leq p<\infty$.
\end{rem}

Using these interpolation properties we can establish further properties of the Fourier transform
and its inverse:

\begin{theorem}[Hausdorff-Young inequality] \label{TH: HY}
Let $1\leq
p\leq2$ and $\frac{1}{p}+\frac{1}{p'}=1$. There is a constant $C_{p}\geq 1$ such that
for all $f\in L^{p}(\Omega)$
and $a\in l^{p}({\rm L})$ we have
\begin{equation}\label{EQ:HY}
\|\widehat{f}\|_{l^{p'}({\rm
L})}\leq C_{p}\|f\|_{L^{p}(\Omega)}\quad \textrm{ and }\quad \|\mathcal F_{{\rm
L}}^{-1}a\|_{L^{p'}(\Omega)}\leq C_{p}\|a\|_{l^{p}({\rm L})}.
\end{equation}
Similarly, we also have
\begin{equation}\label{EQ:HYast}
\|\widehat{f}_*\|_{l^{p'}({\rm L}^*)}\leq C_{p}\|f\|_{L^{p}(\Omega)}\quad \textrm{ and }
\quad \|\mathcal F_{{\rm
L}^{*}}^{-1}b\|_{L^{p'}(\Omega)}\leq C_{p}\|b\|_{l^{p}({\rm L}^*)},
\end{equation}
for all $b\in l^{p}({\rm L}^*)$.
\end{theorem}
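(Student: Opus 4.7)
The plan is to establish the estimates at the two endpoints $p=1$ and $p=2$ and then to interpolate, with the only subtlety being that the weight $\|v_\xi\|_{L^\infty}^{2-p'}$ in the norm of $l^{p'}({\rm L})$ varies with $p'$. For the first inequality in \eqref{EQ:HY}, at $p=1$ H\"older's inequality gives
\[
|\widehat{f}(\xi)| = \Bigl|\int_\Omega f(x)\,\overline{v_\xi(x)}\,dx\Bigr|
\leq \|f\|_{L^1(\Omega)}\,\|v_\xi\|_{L^\infty(\Omega)},
\]
which is precisely $\|\widehat{f}\|_{l^\infty({\rm L})} \leq \|f\|_{L^1(\Omega)}$; at $p=2$, Lemma \ref{LEM: FTl2} together with Remark \ref{REM:lps} yields $\|\widehat{f}\|_{l^2({\rm L})} \leq M\|f\|_{L^2(\Omega)}$. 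To interpolate I introduce the analytic family
\[
T_z f(\xi) := \widehat{f}(\xi)\,\|v_\xi\|_{L^\infty(\Omega)}^{-(1-z)},\qquad 0\leq \operatorname{Re}(z)\leq 1,
\]
which absorbs the variable weight into an analytic prefactor. On $\operatorname{Re}(z)=0$ one has $|T_zf(\xi)| = |\widehat{f}(\xi)|\,\|v_\xi\|_{L^\infty}^{-1}$, so $T_z:L^1\to l^\infty(\ind)$ with norm bounded uniformly in $\operatorname{Im}(z)$; on $\operatorname{Re}(z)=1$ one has $|T_zf(\xi)| = |\widehat{f}(\xi)|$, so $T_z:L^2\to l^2(\ind)$ with uniform control. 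Stein's interpolation theorem then furnishes a uniform bound for $T_\theta:L^p(\Omega)\to l^{p'}(\ind)$ with $1/p = 1-\theta/2$ and $1/p'=\theta/2$; since $|T_\theta f(\xi)| = |\widehat{f}(\xi)|\,\|v_\xi\|_{L^\infty}^{(2-p')/p'}$, the unweighted $l^{p'}$-norm of $T_\theta f$ equals $\|\widehat{f}\|_{l^{p'}({\rm L})}$, which yields the desired estimate.

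For the second inequality in \eqref{EQ:HY} I apply the same scheme to the inverse transform $a\mapsto \mathcal{F}_{{\rm L}}^{-1}a = \sum_\xi a(\xi)u_\xi$: the endpoint $p=1$ is the triangle inequality $\|\mathcal{F}_{{\rm L}}^{-1}a\|_{L^\infty(\Omega)}\leq \sum_\xi |a(\xi)|\,\|u_\xi\|_{L^\infty} = \|a\|_{l^1({\rm L})}$, the endpoint $p=2$ is again Lemma \ref{LEM: FTl2}, and Stein interpolation applied to the analytic family $S_z a(x) := \sum_\xi a(\xi)\,\|u_\xi\|_{L^\infty}^{-(1-z)} u_\xi(x)$ handles the intermediate range. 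Alternatively, once the first inequality is established, the second can be obtained by duality through the Parseval pairing \eqref{Parseval}. The two inequalities \eqref{EQ:HYast} then follow from the identical argument with the roles of $u_\xi$ and $v_\xi$ interchanged, since the construction of $\mathcal{F}_{{\rm L}^*}$ and $l^p({\rm L}^*)$ is entirely symmetric to that of $\mathcal{F}_{{\rm L}}$ and $l^p({\rm L})$.

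The main obstacle is precisely the bookkeeping of the $p'$-dependent weights: a naive Riesz--Thorin between $L^1\to l^\infty({\rm L})$ and $L^2\to l^2$ treats the target as a fixed measure space, whereas the actual target $l^{p'}({\rm L})$ carries the weight $\|v_\xi\|^{2-p'}$ which varies with $p'$. Introducing the analytic prefactors $\|v_\xi\|^{-(1-z)}$ and $\|u_\xi\|^{-(1-z)}$ converts the problem to interpolation between unweighted $l^\infty$ and $l^2$ targets on the two boundary lines of the strip, where the classical Stein interpolation machinery applies directly. A minor additional point is to extend $\mathcal F_{{\rm L}}$ and $\mathcal F_{{\rm L}}^{-1}$ from $C^\infty_{{\rm L}}(\overline\Omega)$ and $\mathcal S(\ind)$ respectively to the relevant $L^p(\Omega)$ and $l^p({\rm L})$ spaces, but this is immediate by density once the endpoint bounds are in hand.
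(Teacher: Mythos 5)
Your proposal is correct and follows the same overall strategy as the paper: prove the two endpoint estimates at $p=1$ (H\"older / triangle inequality against $\|v_\xi\|_{L^\infty}$, resp.\ $\|u_\xi\|_{L^\infty}$) and at $p=2$ (Plancherel, via Lemma \ref{LEM: FTl2}), and then interpolate. The only genuine difference is how the interpolation step handles the $p$-dependent weights $\|v_\xi\|_{L^\infty}^{2-p'}$ and $\|u_\xi\|_{L^\infty}^{2-p}$. The paper disposes of this by quoting the interpolation theorem for weighted $L^p$ spaces (Theorem \ref{TH: IWS}, from Bergh--L\"ofstr\"om) together with its corollary in Section \ref{SEC:lp}, which identifies $(l^{1}({\rm L}), l^{2}({\rm L}))_{\theta,p}=l^{p}({\rm L})$ exactly because the weight $\|u_\xi\|^{2-p}$ is the interpolated weight $\omega_0^{p(1-\theta)}\omega_1^{p\theta/2}$; so the weighted scale interpolates as it stands and no analytic family is needed. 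You instead rebuild this fact by hand via Stein's complex interpolation, introducing the analytic prefactors $\|v_\xi\|_{L^\infty}^{-(1-z)}$ and $\|u_\xi\|_{L^\infty}^{-(1-z)}$ to reduce to unweighted targets on the boundary lines; your exponent bookkeeping ($\theta=2/p'$, so $-(1-\theta)p'=2-p'$) is consistent, and the prefactors are well-behaved on the strip since $\|v_\xi\|_{L^\infty}\geq |\Omega|^{-1/2}\|v_\xi\|_{L^2}>0$. Both routes are valid; the paper's is shorter because the weighted-interpolation corollary was set up precisely for this purpose, while yours is more self-contained (modulo Stein's theorem) and makes explicit why a naive Riesz--Thorin with a fixed target measure would not suffice. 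Your alternative derivation of the second inequality by duality through \eqref{Parseval} also works and is closer in spirit to Theorem \ref{TH:Duality lp}.
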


It follows from the proof that 
if ${\rm L}_{\Omega}$ is self-adjoint, then the $l^{2}_{L}$-norms discussed in
Remark \ref{REM:lps} coincide, and so we can put $C_{p}=1$
in inequalities \eqref{EQ:HY} and \eqref{EQ:HYast}.
If ${\rm L}_{\Omega}$ is not self-adjoint, $C_{p}$ may in principle depend
on ${\rm L}$ and its domain
through constants from inequalities in Lemma \ref{LEM: FTl2}.

\begin{proof}[Proof of Theorem \ref{TH: HY}]
First we note that the proofs of \eqref{EQ:HY} and \eqref{EQ:HYast} are similar, so it
suffices to prove only \eqref{EQ:HY}. Then we observe that
\eqref{EQ:HY} would follow from the $L^{1}(\Omega)\rightarrow
l^{\infty}({\rm L})$ and $l^{1}({\rm L})\rightarrow L^{\infty}(\Omega)$ boundedness
in view of the Plancherel identity in Proposition \ref {PlanchId}
by interpolation, see e.g. Bergh and L\"ofstr\"om
\cite[Corollary 5.5.4]{Bergh-Lofstrom:BOOK-Interpolation-spaces}.
We note that in view of the discussion in Remark \ref{REM:lps} we 
write constants $C_{p}$ in inequalities \eqref{EQ:HY} and \eqref{EQ:HYast}.
In particular, if ${\rm L}_{\Omega}$ is self-adjoint, we can put $C_{p}=1$.

Thus, we can assume that $p=1$.
Using the definition
$\widehat{f}(\xi)=\int_{\Omega}f(x)\overline{v_{\xi}(x)}dx$
we get
$$
|\widehat{f}(\xi)|\leq\int_{\Omega}|f(x)|\,|\overline{v_{\xi}(x)}|dx\leq\|\overline{v_{\xi}}\|_{L^{\infty}}\|f\|_{L^{1}}.
$$
Therefore,
$$
\|\widehat{f}\|_{l^{\infty}({\rm L})}=\sup_{\xi\in\ind}|\widehat{f}(\xi)|
\|v_{\xi}\|^{-1}_{L^{\infty}}\leq\|f\|_{L^{1}},
$$
which gives the first inequality in \eqref{EQ:HY} for $p=1$.
For the second one, using 
$$(\mathcal F_{{\rm L}}^{-1}a)(x)=\sum\limits_{\xi\in\ind}a(\xi)u_{\xi}(x)$$ we
have
$$
|(\mathcal F_{{\rm L}}^{-1}a)(x)|\leq\sum\limits_{\xi\in\ind}|a(\xi)||u_{\xi}(x)|
\leq\sum\limits_{\xi\in\ind}|a(\xi)|\,\|u_{\xi}\|_{L^{\infty}}
=\|a\|_{l^{1}({\rm L})},
$$
from which we get
$$
\|\mathcal F_{{\rm L}}^{-1}a\|_{L^{\infty}}\leq\|a\|_{l^{1}({\rm L})},
$$
completing the proof.
\end{proof}

We now turn to the duality between spaces $l^{p}({\rm L})$ and
$l^{q}({\rm L}^{\ast})$:

\begin{theorem}[Duality of $l^{p}({\rm L})$ and $l^{p'}({\rm L}^{\ast})$] \label{TH:Duality lp}
Let $1\leq p<\infty$ and
$\frac{1}{p}+\frac{1}{p'}=1$. Then 
$$\left(l^{p}({\rm
L})\right)'=l^{p'}({\rm L}^{\ast}) \quad \textrm{ and }\quad \left(l^{p}({\rm
L}^{\ast})\right)'=l^{p'}({\rm L}).$$
\end{theorem}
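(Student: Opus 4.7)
The plan is to view each $l^{p}({\rm L})$ as a weighted $\ell^{p}$ space on $\ind$ and read off the duality from the classical weighted-$\ell^{p}$ duality, recognising that the weights defining $l^{p'}({\rm L}^{\ast})$ are exactly the classical dual weights. Because the definitions of $l^{p}({\rm L})$ and $l^{p}({\rm L}^{\ast})$ are symmetric under the swap $u_{\xi}\leftrightarrow v_{\xi}$, the two claimed identities are formally the same statement, so it suffices to prove $(l^{p}({\rm L}))'=l^{p'}({\rm L}^{\ast})$ under the pairing $\langle a,b\rangle:=\sum_{\xi\in\ind}a(\xi)\overline{b(\xi)}$.

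The first step is bookkeeping. Set $\omega_{p}(\xi):=\|u_{\xi}\|_{L^{\infty}}^{2-p}$ for $1\le p\le 2$ and $\omega_{p}(\xi):=\|v_{\xi}\|_{L^{\infty}}^{2-p}$ for $2\le p<\infty$, so that $\|a\|_{l^{p}({\rm L})}^{p}=\sum_{\xi}|a(\xi)|^{p}\omega_{p}(\xi)$; let $\omega_{p}^{\ast}$ be the analogous weight defining $l^{p}({\rm L}^{\ast})$, obtained by swapping $u$ and $v$ in the formulas above. Using $1/p+1/p'=1$ in the form $pp'=p+p'$, the crucial weight-matching identity is
\begin{equation*}
\omega_{p}^{\,1-p'}=\omega_{p'}^{\ast}\qquad\text{for all }1\le p<\infty,
\end{equation*}
which in each subcase reduces to the elementary $(2-p)(1-p')=2-p'$; the same relation also gives $(2-p)/p+(2-p')/p'=0$, which is what makes the H\"older exponents compatible. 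From the factorisation $a(\xi)\overline{b(\xi)}=\bigl(a(\xi)\omega_{p}(\xi)^{1/p}\bigr)\cdot\bigl(\overline{b(\xi)}\omega_{p}(\xi)^{-1/p}\bigr)$ combined with the identity, classical H\"older yields $|\langle a,b\rangle|\le\|a\|_{l^{p}({\rm L})}\|b\|_{l^{p'}({\rm L}^{\ast})}$, and hence the embedding $l^{p'}({\rm L}^{\ast})\hookrightarrow(l^{p}({\rm L}))'$ with operator norm at most $1$.

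For the converse, given $\Phi\in(l^{p}({\rm L}))'$, define $b(\eta):=\overline{\Phi(\delta_{\eta})}$ where $\delta_{\eta}$ is the Kronecker sequence. Since finitely supported sequences are dense in $l^{p}({\rm L})$ for $1\le p<\infty$, continuity of $\Phi$ gives $\Phi(a)=\sum_{\xi}a(\xi)\overline{b(\xi)}$ on this dense subspace, provided we also show $b\in l^{p'}({\rm L}^{\ast})$ with the expected bound. For $1<p<\infty$ I would test $\Phi$ on the finite-support sequences $a_{F}(\xi):=\mathrm{sgn}\,b(\xi)\cdot|b(\xi)|^{p'-1}\omega_{p'}^{\ast}(\xi)$ supported on a finite $F\subset\ind$; using $p(p'-1)=p'$ together with the weight identity gives $\|a_{F}\|_{l^{p}({\rm L})}^{p}=\Phi(a_{F})=\sum_{F}|b(\xi)|^{p'}\omega_{p'}^{\ast}(\xi)$, whence $\bigl(\sum_{F}|b|^{p'}\omega_{p'}^{\ast}\bigr)^{1/p'}\le\|\Phi\|$, and letting $F$ exhaust $\ind$ yields $\|b\|_{l^{p'}({\rm L}^{\ast})}\le\|\Phi\|$. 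The endpoint $p=1$ is more direct: $|b(\eta)|=|\Phi(\delta_{\eta})|\le\|\Phi\|\cdot\|\delta_{\eta}\|_{l^{1}({\rm L})}=\|\Phi\|\cdot\|u_{\eta}\|_{L^{\infty}}$, giving $\|b\|_{l^{\infty}({\rm L}^{\ast})}\le\|\Phi\|$ immediately.

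The only genuine obstacle is algebraic bookkeeping: confirming that the single identity $\omega_{p}^{1-p'}=\omega_{p'}^{\ast}$ correctly unifies the four subcases produced by the definitional switch at $p=2$ on each side of the duality, and that the same matching also handles the $p=1$ endpoint once interpreted appropriately. Once that identity is in hand, the remainder is the standard weighted $\ell^{p}$ duality argument; notably the WZ hypothesis plays no role here, since the statement is purely about the sequence spaces on $\ind$ and makes no reference to pointwise values of $u_{\xi}$ or $v_{\xi}$ beyond their $L^{\infty}$-norms.
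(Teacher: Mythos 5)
Your proof is correct, and for the Hölder direction it is substantively the same as the paper's: the weight identity $\omega_{p}^{1-p'}=\omega_{p'}^{\ast}$ you isolate is algebraically the same relation the paper encodes as $\tfrac{2}{p}-1=1-\tfrac{2}{p'}$, and the factorisation $a=\bigl(a\,\omega_{p}^{1/p}\bigr)\bigl(\omega_{p}^{-1/p}\bigr)$ is exactly the paper's insertion of $\|u_{\xi}\|_{L^\infty}^{\pm(\frac{2}{p}-1)}$ before applying H\"older. Where you genuinely go further is the converse: the paper's proof stops at the H\"older estimate, i.e.\ it establishes only the bounded embedding $l^{p'}({\rm L}^{\ast})\hookrightarrow (l^{p}({\rm L}))'$ and quietly invokes that the remainder is ``rather standard.'' You supply that remainder in full --- identifying $b(\eta)=\overline{\Phi(\delta_\eta)}$, testing against the extremal finitely supported sequences $a_F$, using $p(p'-1)=p'$ together with the weight identity to get $\|a_F\|_{l^p({\rm L})}^p=\sum_F|b|^{p'}\omega_{p'}^*$, and then dividing by $\|a_F\|$ and letting $F\uparrow\ind$ --- plus the direct endpoint argument at $p=1$. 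That is the correct standard weighted-$\ell^p$ duality argument, and packaging the four definitional subcases (the switch at $p=2$ on each side) into the single identity $\omega_{p}^{1-p'}=\omega_{p'}^{\ast}$ is a tidy way to avoid case-by-case rechecking. Two small cosmetic remarks: your pairing $\sum a\overline{b}$ is conjugate-linear whereas the paper's $\sum \sigma_1\sigma_2$ is bilinear --- either realises the dual since the spaces are conjugation-invariant --- and in the step ``$=\Phi(a_F)$'' you mean $\Phi(a_F)\le\|\Phi\|\,\|a_F\|$, from which $\|a_F\|^{p-1}\le\|\Phi\|$ and then $(\sum_F|b|^{p'}\omega_{p'}^*)^{1/p'}\le\|\Phi\|$ follow by $\tfrac{p-1}{p}=\tfrac1{p'}$. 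Your closing observation that the WZ hypothesis is irrelevant here is also accurate.
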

\begin{proof} 
The proof is rather standard but we give some details for clarity.
The duality can be given by the form
$$
(\sigma_{1}, \sigma_{2})=\sum\limits_{\xi\in\ind
}\sigma_{1}(\xi){\sigma_{2}(\xi)}
$$
for $\sigma_{1}\in l^{p}({\rm L})$ and $\sigma_{2}\in l^{p'}({\rm
L}^{\ast})$.
Assume that $1<p\leq2$. Then, if $\sigma_{1}\in l^{p}({\rm L})$
and $\sigma_{2}\in l^{p'}({\rm L}^{\ast})$, we have
\begin{align*}
|(\sigma_{1}, \sigma_{2})|&= \left|\sum_{\xi\in\ind}
\sigma_{1}(\xi)\sigma_{2}(\xi)\right|\\
&=\left|\sum_{\xi\in\ind}
\sigma_{1}(\xi)\|u_{\xi}\|_{L^{\infty}}^{\frac{2}{p}-1}\|u_{\xi}\|_{L^{\infty}}^{-(\frac{2}{p}-1)}\sigma_{2}(\xi)\right|\\
&\leq\left(\sum_{\xi\in\ind}
|\sigma_{1}(\xi)|^{p}\|u_{\xi}\|_{L^{\infty}}^{p(\frac{2}{p}-1)}\right)^{p}\left(\sum_{\xi\in\ind}
|\sigma_{2}(\xi)|^{p'}\|u_{\xi}\|_{L^{\infty}}^{-p'(\frac{2}{p}-1)}\right)^{\frac{1}{p'}}\\
&=\|\sigma_{1}\|_{l^{p}({\rm L})}\|\sigma_{2}\|_{l^{p'}({\rm
L}^{\ast})},
\end{align*}
where we used that $2\leq p'<\infty$ and that
$\frac{2}{p}-1=1-\frac{2}{p'}$ in the last line.
Assume now that $2<p<\infty$. Then, if $\sigma_{1}\in l^{p}({\rm L})$
and $\sigma_{2}\in l^{p'}({\rm L}^{\ast})$, we have
\begin{align*}
|(\sigma_{1}, \sigma_{2})|&= \left|\sum_{\xi\in\ind}
\sigma_{1}(\xi)\sigma_{2}(\xi)\right|\\
&=\left|\sum_{\xi\in\ind}
\sigma_{1}(\xi)\|v_{\xi}\|_{L^{\infty}}^{\frac{2}{p}-1}\|v_{\xi}\|_{L^{\infty}}^{-(\frac{2}{p}-1)}\sigma_{2}(\xi)\right|\\
&\leq\left(\sum_{\xi\in\ind}
|\sigma_{1}(\xi)|^{p}\|v_{\xi}\|_{L^{\infty}}^{p(\frac{2}{p}-1)}\right)^{p}\left(\sum_{\xi\in\ind}
|\sigma_{2}(\xi)|^{p'}\|v_{\xi}\|_{L^{\infty}}^{-p'(\frac{2}{p}-1)}\right)^{\frac{1}{p'}}\\
&=\|\sigma_{1}\|_{l^{p}({\rm L})}\|\sigma_{2}\|_{l^{p'}({\rm
L}^{\ast})}.
\end{align*}

Let now $p=1$. In this case we get
\begin{align*}
|(\sigma_{1}, \sigma_{2})|&= \left|\sum_{\xi\in\ind}
\sigma_{1}(\xi)\sigma_{2}(\xi)\right|\\
&=\left|\sum_{\xi\in\ind}
\sigma_{1}(\xi)\|u_{\xi}\|_{L^{\infty}}\|u_{\xi}\|_{L^{\infty}}^{-1}\sigma_{2}(\xi)\right|\\
&\leq\left(\sum_{\xi\in\ind}
|\sigma_{1}(\xi)|\,\|u_{\xi}\|_{L^{\infty}}\right)\sup_{\xi\in\ind}|\sigma_{2}(\xi)|\,\|u_{\xi}\|^{-1}_{L^{\infty}}\\
&=\|\sigma_{1}\|_{l^{1}({\rm L})}\|\sigma_{2}\|_{l^{\infty}({\rm
L}^{\ast})}.
\end{align*}
The proofs for the adjoint spaces are similar.
\end{proof}

\section{Schwartz' kernel theorem}
\label{SEC:Schwartz}

This section is devoted to establishing the Schwartz kernel theorem in the spaces
of distributions $\mathcal D'_{{\rm L}}(\Omega)$.
In this analysis as  well as in establishing further estimates for the integral kernels in
Section \ref{SEC:kernels}, we will need the following assumption which may be
also regarded as the definition of the number $s_{0}$. So, from now on we will make the
following:

\begin{assump}
\label{Assumption_4}
Assume that the number
$s_0\in\mathbb R$ is such that we have
$$\sum_{\xi\in\ind} \langle\xi\rangle^{-s_0}<\infty.$$
\end{assump}
Recalling the operator ${\rm L}^{\circ}$ in \eqref{EQ:Lo-def}
the assumption \eqref{Assumption_4} is equivalent to assuming that
the operator $({\rm I}+{\rm L^\circ L})^{-\frac{s_0}{4m}}$ is Hilbert-Schmidt on $L^2(\Omega)$.
Indeed, recalling the definition of $\langle\xi\rangle$ in \eqref{EQ:angle},
namely that $\langle\xi\rangle$ are the eigenvalues of $({\rm I}+{\rm L^\circ L})^{-\frac{s_0}{2m}}$,
the condition
that the operator $({\rm I}+{\rm L^\circ L})^{-\frac{s_0}{4m}}$ is Hilbert-Schmidt is equivalent to
the condition that
\begin{equation}\label{EQ:HS-conv}
\|({\rm I}+{\rm L^\circ L})^{-\frac{s_0}{4m}}\|_{\tt HS}^2\cong \sum_{\xi\in\ind}
\langle\xi\rangle^{-s_0}<\infty.
\end{equation}
If L is elliptic, we may expect that we can take any $s_0>n$ but this depends on the
boundary conditions in general.
The order $s_0$ will enter the regularity properties of the Schwartz kernels.

\medskip
We will use the notation
$$C^{\infty}_{{\rm L}}(\overline{\Omega}\times \overline{\Omega}):=
C^{\infty}_{{\rm L}}(\overline{\Omega})\otimes C^{\infty}_{{\rm L}}(\overline{\Omega}),$$
and for the corresponding dual space we write
$$\mathcal D'_{{\rm L}}(\Omega\times\Omega):=
\left(C^{\infty}_{{\rm L}}(\overline{\Omega}\times \overline{\Omega})\right)^\prime.$$
The purpose of the subsequent discussion is to show that
for a continuous linear operator $$A:C^{\infty}_{{\rm
L}}(\overline{\Omega})\rightarrow\mathcal D'_{{\rm L}}(\Omega)$$
there exists the kernel $K\in \mathcal D'_{{\rm L}}(\Omega\times\Omega)$ such that
$$
\langle Af,g\rangle=\int_{\Omega}\int_{\Omega}K(x,y)f(x)g(y)dxdy,
$$
and, using the notion of the ${\rm L}$-convolution in Section \ref{SEC:conv},
the convolution kernel $k_{A}(x)\in\mathcal D'_{{\rm L}}(\Omega)$, such that
$$
Af(x)=(k_{A}(x)\sL f)(x).
$$
Here as usual, we identify an integrable function $w$ in, e.g., $C^{\infty}_{{\rm
L}}(\overline{\Omega})$, with the distribution
$$
C^{\infty}_{{\rm
L}}(\overline{\Omega})\ni \varphi\mapsto\langle w,
\varphi\rangle=\int_{\Omega}w(x)\varphi(x)dx,
$$
and we shall use the integral as a notation for the value $\langle
w, \varphi\rangle$ of $w$ at $\varphi$ also when $w$ is an
arbitrary distribution in $\mathcal D'_{{\rm L}}(\Omega)$.

\medskip
Consider the space $\mathcal A$ of all separately continuous
bilinear functionals $A$ on $C^{\infty}_{{\rm
L}}(\overline{\Omega})\times C^{\infty}_{{\rm
L}}(\overline{\Omega})$ with the topology of uniform convergence
on products of bounded sets in $C^{\infty}_{{\rm
L}}(\overline{\Omega})$. Any distribution $w$ in $\mathcal
D'_{{\rm L}}(\Omega\times\Omega)$ gives rise to such a functional
$A\in \mathcal A$ by specialisation to products of functions
\begin{equation}\label{LinFunc-l}
(\Lambda w)(f, g):=\langle w, f(x)g(y)\rangle=:A(f,g).
\end{equation}
The kernel theorem says that the mapping
$$
\Lambda: w\mapsto A
$$
is a linear homeomorphism between $\mathcal D'_{{\rm
L}}(\Omega\times\Omega)$ and $\mathcal A$. In particular, for every
$A\in \mathcal A$ there is precisely one `kernel' $K\in \mathcal D'_{{\rm L}}(\Omega\times\Omega)$
such that
$$
A(f,g)=\int_{\Omega}\int_{\Omega}K(x,y)f(x)g(y)dxdy.
$$
Such theorem was proved by Schwartz \cite{Schwartz} for
standard distributions, but then much simplified proofs have been given,
for instance, by Ehrenpreis \cite{Ehrenpreis} and by Gask
\cite{Gask}.

Any function $h$ in $C_{{\rm
L}}^{\infty}(\overline{\Omega}\times\overline{\Omega})$ can be
expanded in an ${\rm L}$-Fourier series
\begin{equation}\label{EQ: FS}
h(x,y)=\sum_{\xi,\eta\in\ind}a_{\xi\eta}u_{\xi}(x)u_{\eta}(y).
\end{equation}
The coefficients in (\ref{EQ: FS}) are given by
$$
a_{\xi\eta}=\int_{\Omega}\int_{\Omega}h(x,y)\overline{v_{\xi}(x)}\,\overline{v_{\eta}(y)}dxdy.
$$
Integration by parts in these formulae and the Cauchy-Schwarz inequality
yield the estimates
\begin{equation}\label{EQ: FSE}
|a_{\xi\eta}|\leq
C_{j_1,j_2}|h|_{j_1+j_2}(1+\langle\xi\rangle)^{-m j_1}(1+\langle\eta\rangle)^{-m j_2},
\end{equation}
where $C_{j_1,j_2}$ is a constant independent of $h$, $j_1,j_2\in\mathbb N_0$,
and
$$
|h|_{j}:=\sum_{k_1+k_2\leq j}\|{\rm L}_x^{k_1}{\rm L}_y^{k_2}h(x,y)\|_{L^2(\Omega\times\Omega)}.
$$
Rescaling the coefficients $a_{\xi\eta}$ we can write (\ref{EQ: FS}) in the form
\begin{equation}\label{EQ: FS_2}
h(x,y)=\sum_{\xi,\eta\in\ind}b_{\xi\eta}f_{\xi}(x)g_{\eta}(y)
\end{equation}
with $f_{\xi}(x)$ and $g_{\eta}(y)$ proportional to $u_{\xi}(x)$
and $u_{\eta}(y)$, with new coefficients $b_{\xi\eta}$.
The proportionality factors shall be chosen in
a suitable way, expressed in the following discussion.

\begin{lemma}\label{LEM: 6.1}
Let $h$ be a function in $C_{{\rm
L}}^{\infty}(\overline{\Omega}\times\overline{\Omega})$ and $k$
and $l$ given positive integers. Then $f_{\xi}$ and $g_{\eta}$ in
(\ref{EQ: FS_2}) can be chosen such that
$$
|f_{\xi}|_{k}\leq1
\quad
\textrm{ and }
\quad
|g_{\eta}|_{l}\leq1
$$
for all $\xi$ and $\eta$, and
$$
\sum_{\xi, \eta\in\ind}|b_{\xi\eta}|\leq C|h|_{k+l+2s_{0}},
$$
with a constant $C$ independent of $h$, and the number $s_{0}$ is the
one from Assumption \eqref{Assumption_4}.
\end{lemma}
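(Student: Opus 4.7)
The plan is the natural rescaling: replace $u_\xi(x)$ in \eqref{EQ: FS} by a multiple having unit $|\cdot|_k$-seminorm and similarly $u_\eta(y)$ with $|\cdot|_l$, absorbing the normalisation into new coefficients $b_{\xi\eta}$. Concretely, I would set
$$ f_\xi(x) := |u_\xi|_k^{-1}\, u_\xi(x), \qquad g_\eta(y) := |u_\eta|_l^{-1}\, u_\eta(y), \qquad b_{\xi\eta} := a_{\xi\eta}\, |u_\xi|_k\, |u_\eta|_l, $$
so that $|f_\xi|_k = 1$ and $|g_\eta|_l = 1$ tautologically; the division is legitimate because $|u_\xi|_k \geq \|u_\xi\|_{L^2} = 1$. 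The identity $a_{\xi\eta} u_\xi(x) u_\eta(y) = b_{\xi\eta} f_\xi(x) g_\eta(y)$ holds termwise, so \eqref{EQ: FS_2} is automatic.

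The quantitative step rests on the crude upper bound
$$ |u_\xi|_k \;=\; \sum_{j \leq k} \|{\rm L}^{j} u_\xi\|_{L^2} \;=\; \sum_{j \leq k} |\lambda_\xi|^j \;\leq\; C_k\, \langle\xi\rangle^{mk}, $$
coming from the eigenvalue relation together with the definition \eqref{EQ:angle} of $\langle\xi\rangle$; an analogous estimate holds for $|u_\eta|_l$. Feeding these into \eqref{EQ: FSE} with the choice $j_1 = k + s_0$, $j_2 = l + s_0$ (rounded up to integers if $s_0 \notin \mathbb{N}_0$, at the cost of harmless constants) then yields
$$ |b_{\xi\eta}| \;\leq\; C\, |h|_{k+l+2s_0}\, (1+\langle\xi\rangle)^{-m s_0}\, (1+\langle\eta\rangle)^{-m s_0}. $$

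Since $m \geq 1$ and $\langle\xi\rangle \geq 1$, one has $(1+\langle\xi\rangle)^{-m s_0} \leq C\, \langle\xi\rangle^{-s_0}$, and Assumption \ref{Assumption_4} then sums to
$$ \sum_{\xi,\eta \in \ind} |b_{\xi\eta}| \;\leq\; C\, |h|_{k+l+2s_0}\, \Bigl(\sum_{\xi \in \ind} \langle\xi\rangle^{-s_0}\Bigr)^{\!2} \;\leq\; C'\, |h|_{k+l+2s_0}, $$
which is the required bound. No serious obstacle is anticipated: the whole argument is a careful-but-elementary balancing of the polynomial growth of $|u_\xi|_k$ in $\langle\xi\rangle$ against the polynomial decay provided by integration by parts in \eqref{EQ: FSE}, leaving exactly the weight $\langle\xi\rangle^{-s_0}$ that Assumption \ref{Assumption_4} is tailored to handle; the only bookkeeping subtlety is the possibly non-integer value of $s_0$, resolved by taking its ceiling.
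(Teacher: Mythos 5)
Your proposal is correct and follows essentially the same route the paper takes: the paper sets $f_\xi = (1+\langle\xi\rangle)^{-mk}u_\xi$ and $g_\eta = (1+\langle\eta\rangle)^{-ml}u_\eta$ and absorbs the compensating factors into $b_{\xi\eta}$, then invokes \eqref{EQ: FSE} with $j_1=k+s_0$, $j_2=l+s_0$ and Assumption \ref{Assumption_4}; your choice $f_\xi=|u_\xi|_k^{-1}u_\xi$ differs only cosmetically, since $|u_\xi|_k\asymp(1+\langle\xi\rangle)^{mk}$, and makes $|f_\xi|_k=1$ tautological rather than requiring the small binomial argument $\sum_{j\le k}t^j\le(1+(1+t^2)^{1/(2m)})^{mk}$. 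Your remark on the possibly non-integer $s_0$ is correct and a detail the paper silently suppresses.
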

\begin{proof}
We write (\ref{EQ: FS}) as
$$
h(x,y)=\sum_{\xi,\eta\in\ind}
a_{\xi\eta}(1+\langle\xi\rangle)^{m k}(1+\langle\eta\rangle)^{m l}
[(1+\langle\xi\rangle)^{-m k}u_{\xi}(x)][(1+\langle\eta\rangle)^{-m l}u_{\eta}(y)]
$$
and choose the functions in square brackets for $f_{\xi}$ and
$g_{\eta}$. The estimates (\ref{EQ: FSE}) and some
straightforward calculations then give the lemma.
\end{proof}

From Lemma \ref{LEM: 6.1} we readily obtain the following corollary
that expresses the fact that if $h$ is in some
bounded set in $C_{{\rm L}}^{\infty}(\overline{\Omega}\times\overline{\Omega})$, the
expansion \eqref{EQ: FS_2}
can be made such that (\ref{EQ: FS_3}) holds with
$f_{\xi}$ and $g_{\eta}$ in fixed bounded sets in $C_{{\rm
L}}^{\infty}(\overline{\Omega})$.

\begin{corollary}\label{LEM: 6.2}
Let $\{r_{\nu}\}_{\nu=1}^{\infty}$ be a sequence of positive real numbers.
Then there exists another sequence $\{s_{\nu}\}_{1}^{\infty}$ of positive real numbers
such that for every $h\in C_{{\rm
L}}^{\infty}(\overline{\Omega}\times\overline{\Omega})$ satisfying
$$
|h|_{\nu}\leq r_{\nu}, \quad \nu=1, 2, \ldots,
$$
we can choose $f_{\xi}$ and $g_{\eta}$ in
\eqref{EQ: FS_2} so that we have
$$
|f_{\xi}|_{\nu}\leq s_{\nu}, \quad |g_{\eta}|_{\nu}\leq s_{\nu},
\quad \nu=1, 2, \ldots,
$$
for all $\xi$ and $\eta$, and also
\begin{equation}\label{EQ: FS_3}
\sum_{\xi,\eta\in\ind}|b_{\xi\eta}|\leq1.
\end{equation}
\end{corollary}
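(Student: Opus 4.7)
The plan is to refine the proof of Lemma~\ref{LEM: 6.1} by letting the rescaling factors that pass from the basis $u_\xi\otimes u_\eta$ to the desired $f_\xi\otimes g_\eta$ depend on the Fourier coefficients of $h$ itself, rather than on a single fixed pair $(k,l)$. The hypothesis $|h|_\nu\le r_\nu$ for \emph{every} $\nu$ gives, via \eqref{EQ: FSE}, arbitrary polynomial decay of $a_{\xi\eta}$ in $(\langle\xi\rangle,\langle\eta\rangle)$, with constants controlled by $\{r_\nu\}$; this decay will be absorbed into an $h$-dependent $\xi$-scale.

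First I would fix a small $\epsilon>0$ and an integer $j_*$ with $mj_*>s_0$, so that both $\sum_\xi(1+\langle\xi\rangle)^{-s_0-\epsilon}$ and $\sum_\eta(1+\langle\eta\rangle)^{-mj_*}$ are finite by Assumption~\ref{Assumption_4}. Starting from $h=\sum a_{\xi\eta}u_\xi\otimes u_\eta$, set
\[
\alpha_\xi:=\sum_{\eta\in\ind}|a_{\xi\eta}|,\qquad \beta_\eta:=\sum_{\xi\in\ind}|a_{\xi\eta}|,
\]
and apply \eqref{EQ: FSE} with $j_1=N$, $j_2=j_*$, summing in $\eta$, to obtain
\[
\alpha_\xi\le C_N\, r_{N+j_*}\,(1+\langle\xi\rangle)^{-mN}\quad\text{for every integer }N\ge 0,
\]
with the analogous bound for $\beta_\eta$; here $C_N$ depends on $N,j_*$ but not on $h$. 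Next pick the fixed weight $M_\xi:=c_0(1+\langle\xi\rangle)^{s_0+\epsilon}$ with $c_0$ normalised so that $\sum_\xi M_\xi^{-1}=1$, and define
\[
f_\xi:=\alpha_\xi^{1/2}M_\xi\, u_\xi,\quad g_\eta:=\beta_\eta^{1/2}M_\eta\, u_\eta,\quad b_{\xi\eta}:=\frac{a_{\xi\eta}}{\alpha_\xi^{1/2}\beta_\eta^{1/2}M_\xi M_\eta},
\]
with the convention that indices for which $\alpha_\xi=0$ or $\beta_\eta=0$ (which force $a_{\xi\eta}=0$) are simply dropped. By construction $h=\sum b_{\xi\eta}f_\xi\otimes g_\eta$.

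The two required bounds now fall out. For the $\ell^1$-sum, the elementary inequality $|a_{\xi\eta}|^2\le\alpha_\xi\beta_\eta$ (immediate from $|a_{\xi\eta}|\le\alpha_\xi$ and $|a_{\xi\eta}|\le\beta_\eta$) gives
\[
\sum_{\xi,\eta}|b_{\xi\eta}|\le\sum_{\xi,\eta}\frac{1}{M_\xi M_\eta}=\Bigl(\sum_\xi M_\xi^{-1}\Bigr)^{2}=1,
\]
which is \eqref{EQ: FS_3}. For the seminorms, using $\|{\rm L}^j u_\xi\|_{L^2}=|\lambda_\xi|^j\le(1+\langle\xi\rangle)^{mj}$ and the above decay estimate for $\alpha_\xi$ with $N=N(\nu):=\lceil 2\nu+2(s_0+\epsilon)/m\rceil$, the exponent $m\nu+s_0+\epsilon-mN(\nu)/2$ of $1+\langle\xi\rangle$ is non-positive by the choice of $N(\nu)$, whence
\[
|f_\xi|_\nu\le \alpha_\xi^{1/2}M_\xi(1+\langle\xi\rangle)^{m\nu}\le c_0\bigl(C_{N(\nu)}\, r_{N(\nu)+j_*}\bigr)^{1/2}=:s_\nu
\]
uniformly in $\xi$, with the identical bound for $|g_\eta|_\nu$. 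Since $s_\nu$ depends only on $\{r_\nu\}$ and on the fixed constants $s_0,\epsilon,m,j_*,c_0,C_{N(\nu)}$, this proves the corollary.

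The heart of the argument, and what I expect to be the only genuine obstacle, is identifying this tension-resolving rescaling: the scale $\sigma_\xi:=\alpha_\xi^{1/2}M_\xi$ must decay super-polynomially in $\xi$ so that \emph{every} seminorm $|f_\xi|_\nu$ remains uniformly bounded in $\xi$, yet not so fast that $\sum|a_{\xi\eta}/(\sigma_\xi\sigma_\eta)|$ diverges. The factor $\alpha_\xi^{1/2}$ supplies the super-polynomial decay—and it is here that the full strength of the hypothesis $|h|_\nu\le r_\nu$ for \emph{all} $\nu$ is used—while the mild polynomial weight $M_\xi$, combined with the pointwise bound $|a_{\xi\eta}|\le(\alpha_\xi\beta_\eta)^{1/2}$, delivers the $\ell^1$-convergence. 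A naive rescaling by a single polynomial $(1+\langle\xi\rangle)^{-mk}$ of the form used in Lemma~\ref{LEM: 6.1} cannot succeed for the corollary, because such a choice controls $|f_\xi|_\nu$ only for $\nu\le k$.
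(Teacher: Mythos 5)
Your proof is correct, and it fills a genuine gap the paper leaves implicit: the text states that the corollary ``follows readily'' from Lemma~\ref{LEM: 6.1} but gives no argument, and you are right that the naive use of Lemma~\ref{LEM: 6.1} with a single fixed pair $(k,l)$ cannot work, since $f_\xi=(1+\langle\xi\rangle)^{-mk}u_\xi$ gives $|f_\xi|_\nu\asymp(1+\langle\xi\rangle)^{m(\nu-k)}$, which is unbounded in $\xi$ for $\nu>k$. What is genuinely needed is a rescaling $\sigma_\xi u_\xi$ with $\sigma_\xi$ decaying faster than any polynomial, chosen in terms of $h$ so that the $\ell^1$-sum of the new coefficients still converges; your choice $\sigma_\xi=\alpha_\xi^{1/2}M_\xi$, with the Schur-type factorisation $|a_{\xi\eta}|\le(\alpha_\xi\beta_\eta)^{1/2}$ and the mild fixed weight $M_\xi$ absorbing the double sum into $\bigl(\sum_\xi M_\xi^{-1}\bigr)^2=1$, does exactly this. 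The seminorm bound $|f_\xi|_\nu\le s_\nu$ then follows from \eqref{EQ: FSE}, with $s_\nu$ depending only on $\{r_\nu\}$ through $r_{N(\nu)+j_*}$ and fixed constants, which is what uniformity over the bounded set $\{|h|_\nu\le r_\nu\}$ requires. This is in essence the classical Gask-type argument for the kernel theorem (which the paper cites), adapted to the $\langle\xi\rangle$-weighted setting; it is not a consequence of Lemma~\ref{LEM: 6.1} alone but must redo the splitting of $h$ with an $h$-adapted scale, and you identify that point explicitly. One small bookkeeping remark: you should note in passing that $|u_\xi|_\nu\le(1+\langle\xi\rangle)^{m\nu}$ because $\|{\rm L}^j u_\xi\|_{L^2}=|\lambda_\xi|^j\le\langle\xi\rangle^{mj}$ and the seminorm is a maximum (or finite sum) over $j\le\nu$; and that indices with $\alpha_\xi=0$ or $\beta_\eta=0$ contribute only zero coefficients $a_{\xi\eta}$ and are harmlessly omitted, which you already address.
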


Since $A\in\mathcal A$ is continuous, there exist a constant
$C$ and integers $k$ and $l$ (depending on $A$) for
which
\begin{equation}\label{EQ: 6.7}
|A(f, g)|\leq C|f|_{k}\, |g|_{l}, \,\,\, f, g\in C^{\infty}_{{\rm
L}}(\overline{\Omega}).
\end{equation}
As stated above, there is a mapping $\Lambda$ of $\mathcal
D'_{{\rm L}}(\overline{\Omega}\times\overline{\Omega})$ into
$\mathcal A$, defined by (\ref{LinFunc-l}). We shall now first
prove that the range of this mapping is the whole of $\mathcal A$
and that it is one-to-one.

\begin{theorem}\label{TH: 6.1}
For any separately continuous functional $A$ on $C_{{\rm
L}}^{\infty}(\overline{\Omega})\times C_{{\rm
L}}^{\infty}(\overline{\Omega})$ there exists precisely one
distribution $u$ in $\mathcal D'_{{\rm
L}}({\Omega}\times{\Omega})$ such that
\begin{equation}\label{EQ: 6.8}
(\Lambda u)(f, g):=\langle u, f(x)g(y)\rangle=A(f,g)
\end{equation}
holds for all $(f, g)$ in $C_{{\rm L}}^{\infty}(\overline{\Omega})\times
C_{{\rm L}}^{\infty}(\overline{\Omega})$.
The mapping $\Lambda$ defined by \eqref{EQ: 6.8} is a linear
homeomorphism.
\end{theorem}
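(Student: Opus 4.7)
The plan is to use the biorthogonal Fourier expansion \eqref{EQ: FS} on $\overline\Omega\times\overline\Omega$ together with Corollary \ref{LEM: 6.2} as the central technical device, following the Ehrenpreis/Gask-style argument that the paper already alludes to. Uniqueness comes for free from density of simple tensors, and existence/continuity are both extracted from the controlled decomposition in Corollary \ref{LEM: 6.2}. The point is that once one knows how to expand any test function on the product as an absolutely convergent sum of tensor products $f_\xi(x)g_\eta(y)$ with quantitative bounds on the factors, the bilinear continuity \eqref{EQ: 6.7} of $A$ allows one to define $u$ on arbitrary test functions by declaring its value on tensor products to be $A(f,g)$ and extending by linearity and continuity.

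For uniqueness and injectivity of $\Lambda$, I would first observe that by Proposition \ref{LEM: FTinS} applied on $\overline\Omega\times\overline\Omega$ (using the product basis $u_\xi(x)u_\eta(y)$), the finite linear span of tensor products $f(x)g(y)$ with $f,g\in C^\infty_{\rm L}(\overline\Omega)$ is dense in $C^\infty_{\rm L}(\overline\Omega\times\overline\Omega)$. Consequently any $u\in\mathcal D'_{{\rm L}}(\Omega\times\Omega)$ is determined by its values on such tensors, so two distributions inducing the same bilinear functional must coincide; this shows $\Lambda$ is injective.

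For existence (surjectivity of $\Lambda$), given $A\in\mathcal A$ satisfying \eqref{EQ: 6.7} with some fixed $k,l$ and constant $C$, I would define $u$ on $h\in C^\infty_{\rm L}(\overline\Omega\times\overline\Omega)$ as follows. Applying Lemma \ref{LEM: 6.1} with the prescribed $k$ and $l$, one obtains a representation \eqref{EQ: FS_2} with $|f_\xi|_k\le 1$, $|g_\eta|_l\le 1$, and
\[
\sum_{\xi,\eta\in\ind}|b_{\xi\eta}|\le C'|h|_{k+l+2s_0}.
\]
Setting
\[
\langle u,h\rangle:=\sum_{\xi,\eta\in\ind}b_{\xi\eta}\,A(f_\xi,g_\eta),
\]
the series converges absolutely since $|A(f_\xi,g_\eta)|\le C|f_\xi|_k|g_\eta|_l\le C$ by \eqref{EQ: 6.7}. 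To check that this value is independent of the chosen decomposition, I would note that if $\sum b_{\xi\eta}f_\xi\otimes g_\eta=0$ in $C^\infty_{\rm L}(\overline\Omega\times\overline\Omega)$, then testing against the biorthogonal system reduces matters to the already-established case of tensor products, on which $u$ must agree with $A$. Hence $u$ is a well-defined linear functional with $\langle u,f\otimes g\rangle=A(f,g)$, i.e.\ $\Lambda u=A$.

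Continuity of $u$, giving $u\in\mathcal D'_{\rm L}(\Omega\times\Omega)$, and continuity of $\Lambda^{-1}$ both follow from the quantitative content of Corollary \ref{LEM: 6.2}: any bounded set in $C^\infty_{\rm L}(\overline\Omega\times\overline\Omega)$ can be expanded with $f_\xi,g_\eta$ in fixed bounded sets of $C^\infty_{\rm L}(\overline\Omega)$ and with $\sum|b_{\xi\eta}|\le 1$, so $|\langle u,h\rangle|$ is bounded uniformly over such $h$. This gives the estimate $|\langle u,h\rangle|\le C''|h|_{k+l+2s_0}$, hence $u\in\mathcal D'_{\rm L}(\Omega\times\Omega)$ via Proposition \ref{TH: UniBdd} (on the product), and the same bound shows that $A\mapsto u$ is continuous from $\mathcal A$ to $\mathcal D'_{\rm L}(\Omega\times\Omega)$. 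Conversely, continuity of $\Lambda$ itself is immediate, since for fixed bounded sets $B_1,B_2$ in $C^\infty_{\rm L}(\overline\Omega)$ the set $\{f\otimes g:f\in B_1,g\in B_2\}$ is bounded in $C^\infty_{\rm L}(\overline\Omega\times\overline\Omega)$, so convergence of $u_j\to u$ in $\mathcal D'_{\rm L}(\Omega\times\Omega)$ forces uniform convergence of $\Lambda u_j$ to $\Lambda u$ on products of bounded sets.

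The main obstacle I expect is the well-definedness of $\langle u,h\rangle$, because the representation \eqref{EQ: FS_2} afforded by Lemma \ref{LEM: 6.1} is far from unique (one rescales the $f_\xi,g_\eta$ freely), so one must argue carefully that different expansions of the same $h$ give the same value of the defining series. The cleanest route is to fix \emph{one} canonical expansion (namely \eqref{EQ: FS} with the specific rescaling prescribed in the proof of Lemma \ref{LEM: 6.1}) and define $u$ via that, then verify agreement with $A$ on tensor products and invoke density to get independence; the separate continuity of $A$ is used precisely to pass the formula $\langle u,f\otimes g\rangle=A(f,g)$ from individual basis tensors $u_\xi\otimes u_\eta$ to arbitrary tensor products.
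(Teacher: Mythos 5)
Your proposal is correct and follows essentially the same route as the paper: expand $h$ via Lemma \ref{LEM: 6.1}/Corollary \ref{LEM: 6.2}, define $\langle u,h\rangle=\sum b_{\xi\eta}A(f_{\xi},g_{\eta})$ with absolute convergence from \eqref{EQ: 6.7}, obtain uniqueness from density of finite sums of tensor products, and prove the homeomorphism by comparing the seminorms $\rho_{B_xB_y}$ and $\varrho_{B_{xy}}$ over bounded sets. Your explicit attention to well-definedness of $\langle u,h\rangle$ (fixing the canonical Fourier expansion and then checking $\langle u,f\otimes g\rangle=A(f,g)$ via separate continuity) is a point the paper passes over silently, but it is resolved exactly as you suggest.
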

\begin{proof}
Let us write an arbitrary $h$ in $C_{{\rm
L}}^{\infty}(\overline{\Omega}\times\overline{\Omega})$ in the
form given by Lemma \ref{LEM: 6.1}. If $k$ and $l$ are integers
such that (\ref{EQ: 6.7}) holds for our given $A$ we find
\begin{equation}\label{EQ: 6.9}
\sum_{\xi,\eta\in\ind}|b_{\xi\eta}|\,|A(f_{\xi},g_{\eta})|\leq C\, |h|_{k+l+2s_{0}},
\end{equation}
with $C$ independent of $h$. We define $u$ by
\begin{equation}\label{EQ: 6.10}
\langle u, h\rangle:=\sum_{\xi,\eta\in\ind}b_{\xi\eta}\,A(f_{\xi},g_{\eta})
\end{equation}
and conclude from (\ref{EQ: 6.9}) that $u$ is an ${\rm
L}$-distribution on $\Omega\times\Omega$ of order $k+l+2s_{0}$. It
is clear that (\ref{EQ: 6.8}) holds for this $u$ and also that $u$
is uniquely determined by $A$: indeed, if $A$ vanishes it follows from
\eqref{EQ: 6.10} that $u(h)=0$ on all finite sums
$h=\sum_{\xi,\eta\in\ind}b_{\xi\eta}\,f_{\xi}\,g_{\eta}$, and
as the set of such sums is dense in $C_{{\rm
L}}^{\infty}(\overline{\Omega}\times\overline{\Omega})$ the ${\rm
L}$-distribution $u$ must vanish.

Let us now show that the mapping $\Lambda$ defined by \eqref{EQ: 6.8} is a linear
homeomorphism.
In view of Proposition \ref{TH: UniBdd} and Lemma \ref{LEM: UniformBoundedness}
the topologies on $\mathcal A$ and $\mathcal D'_{{\rm
L}}(\overline{\Omega}\times\overline{\Omega})$ can be defined by
the seminorms, which can be also expressed as
\begin{align*}
\rho_{B_{x}B_{y}}(A)&=\sup|A(f,g)|, \,\,\,\,\, f\in B_{x}, \,\,
g\in
B_{y},\\
\varrho_{B_{xy}}(u)&=\sup|\langle u,h\rangle|, \,\,\,\,\,\,\,\,
h\in B_{xy},
\end{align*}
where $B_{x}$, $B_{y}$ are bounded sets in $C_{{\rm
L}}^{\infty}(\overline{\Omega})$ and $B_{xy}$ is a bounded set in
$C_{{\rm L}}^{\infty}(\overline{\Omega}\times\overline{\Omega})$.

It is clear that $\Lambda$ is linear. Let us show that
$\Lambda$ and $\Lambda^{-1}$ are both continuous.

Let $\rho_{B_{x}B_{y}}$ be an arbitrary seminorm on $\mathcal A$.
Then
$$
\rho_{B_{x}B_{y}}(\Lambda
u)=\rho_{B_{x}B_{y}}(A)=\sup\limits_{f\in B_{x}, g\in
B_{y}}|A(f,g)|=\sup\limits_{f\in B_{x}, g\in B_{y}}|\langle u,
fg\rangle|.
$$
It is easy to see that for any bounded sets $B_{x}\subset C_{{\rm
L}}^{\infty}(\overline{\Omega})$ and  $B_{y}\subset C_{{\rm
L}}^{\infty}(\overline{\Omega})$ there exists a bounded set
$B_{xy}\subset C_{{\rm
L}}^{\infty}(\overline{\Omega}\times\overline{\Omega})$ such that
all products $fg$ are in $B_{xy}$ whenever $f$ is in $B_{x}$ and
$g$ is in $B_{y}$. Hence
$$
\sup\limits_{f\in B_{x}, g\in B_{y}}|\langle u,
fg\rangle|\leq\sup\limits_{h\in B_{xy}}|\langle u,
h\rangle|=\varrho_{B_{xy}}(u),
$$
and so $\Lambda$ is continuous. Conversely, if $\varrho_{B_{xy}}$
is a seminorm on $C_{{\rm
L}}^{\infty}(\overline{\Omega}\times\overline{\Omega})$ we find
$$
\varrho_{B_{xy}}(\Lambda^{-1}A)=\varrho_{B_{xy}}(u)=\sup\limits_{h\in
B_{xy}}|\langle u, h\rangle|=\sup\limits_{h\in
B_{xy}}|\sum_{\xi,\eta\in\ind}b_{\xi\eta}\,A(f_{\xi},g_{\eta})|,
$$
where $h$ has been expanded as in Corollary \ref{LEM: 6.2}, and thus
$$
\sup\limits_{h\in
B_{xy}}|A(f_{\xi},g_{\eta})|\leq\sup\limits_{f\in B_{x}, g\in
B_{y}}|A(f,g)|=\rho_{B_{x}B_{y}}(A),
$$
if $B_{x}$ and $B_{y}$ are those bounded sets in $C_{{\rm
L}}^{\infty}(\overline{\Omega})$ which contain all $f_{\xi}$ and
$g_{\eta}$ according to Lemma \ref{LEM: 6.2}. From (\ref{EQ:
FS_3}) we now conclude that
$$
\varrho_{B_{xy}}(\Lambda^{-1}A)\leq\sup\limits_{h\in
B_{xy}}|A(f_{\xi},g_{\eta})|\sum_{\xi,\eta\in\ind}|b_{\xi\eta}|\leq\rho_{B_{x}B_{y}}(A),
$$
and thus $\Lambda^{-1}$ is also continuous. This completes the proof of the theorem.
\end{proof}

Summarising what we have proved, for any linear continuous operator
$$A:C^{\infty}_{{\rm L}}(\overline{\Omega})\rightarrow \mathcal D'_{{\rm L}}(\Omega)$$
there exists a kernel $K_{A}\in \mathcal D'_{{\rm
L}}(\Omega\times\Omega)$ such that for all $f\in C^{\infty}_{{\rm
L}}(\overline{\Omega})$, we can write, in the sense of distributions,
\begin{equation}\label{EQ:int1}
Af(x)=\int_{\Omega}K_{A}(x,y)f(y)dy.
\end{equation}
As usual, $K_{A}$ is called the Schwartz kernel of $A$.
For $f\in C^{\infty}_{{\rm L}}(\overline{\Omega})$, using the Fourier series formula
$$
f(y)=\sum\limits_{\eta\in\ind}\widehat{f}(\eta) u_{\eta}(y),
$$
we can also write
\begin{equation}\label{EQ:int2}
Af(x)=\sum\limits_{\eta\in\ind}\widehat{f}(\eta)\int_{\Omega}K_{A}(x,y)u_{\eta}(y)dy.
\end{equation}

Suppose now that $\{u_{\xi}: \,\,\, \xi\in\ind\}$ is a ${\rm WZ}$-system
in the sense of Definition \ref{DEF: WZ-system}.
Let us introduce the ${\rm L}$-distribution $k_{A}\in\mathcal D'_{{\rm L}}(\Omega\times\Omega)$
by the formula
\begin{equation} \label{EQ: KernelConv}
k_{A}(x,z):=k_A(x)(z):=\sum\limits_{\eta\in\ind}u_{\eta}^{-1}(x)
\int_{\Omega}K_{A}(x,y)u_{\eta}(y)dy \,
u_{\eta}(z).
\end{equation}
Since for some $C>0$ and $N\geq0$ we have by Definition \ref{DEF: WZ-system}
$$
\inf\limits_{x\in\overline{\Omega}}|u_{\eta}(x)|\geq
C\langle\eta\rangle^{-N},
$$
the series in (\ref{EQ: KernelConv}) is converges in the sense of
${\rm L}$-distributions.
Formula \eqref{EQ: KernelConv} means that the Fourier transform of $k_A$ in the
second variable satisfies
\begin{equation}\label{EQ:int3}
\widehat{k_{A}}(x,\eta)u_{\eta}(x)=
\int_{\Omega}K_{A}(x,y)u_{\eta}(y)dy.
\end{equation}
Combining this and \eqref{EQ:int2} we get
\begin{equation*}
Af(x)=\sum\limits_{\eta\in\ind}
\widehat{f}(\eta)\int_{\Omega}K_{A}(x,y)u_{\eta}(y)dy
=\sum\limits_{\eta\in\ind}\widehat{f}(\eta)\widehat{k}_{A}(x,\eta)u_{\eta}(x)
=(f\sL k_{A}(x))(x),
\end{equation*}
where in the last equality we used the notion of the L-convolution in Definition \ref{Convolution}.
Summarising this calculation as well as an analogous argument for the adjoint operator ${\rm L}^*$,
we record

\begin{prop}\label{PROP:conv-kernel}
Suppose that $\{u_{\xi}: \,\,\, \xi\in\ind\}$ is a ${\rm WZ}$-system
in the sense of Definition \ref{DEF: WZ-system}.
Then for a linear continuous operator $$A:C^{\infty}_{{\rm
L}}(\overline{\Omega})\rightarrow \mathcal D'_{{\rm L}}(\Omega)$$
there exists the convolution kernel $k_{A}\in\mathcal D'_{{\rm
L}}(\Omega\times\Omega)$ such that
$$Af(x)=(f\sL k_{A}(x))(x),\quad f\in C^{\infty}_{{\rm
L}}(\overline{\Omega}),$$ where we write $$k_{A}(x)(y):=k_{A}(x,y)$$
in the sense of distributions. The convolution kernel $k_A$ and
the Schwartz kernel $K_A$ of an operator $A$ are related by
formulae \eqref{EQ:int1}--\eqref{EQ:int3}.

Also, for any linear continuous operator $$A:C^{\infty}_{{\rm
L^{\ast}}}(\overline{\Omega})\rightarrow \mathcal D'_{{\rm
L^{\ast}}}(\Omega)$$ there exists a kernel $\widetilde{K}_{A}\in
\mathcal D'_{{\rm L^{\ast}}}(\Omega\times\Omega)$ such that for
all $f\in C^{\infty}_{{\rm L^{\ast}}}(\overline{\Omega})$ we have
$$
Af(x)=\int_{\Omega}\widetilde{K}_{A}(x,y)f(y)dy.
$$
If, in addition, $\{v_{\xi}: \,\,\, \xi\in\ind\}$ is a ${\rm
WZ}$-system, then for a linear continuous operator
$A:C^{\infty}_{{\rm L^{\ast}}}(\overline{\Omega})\rightarrow
\mathcal D'_{{\rm L}^*}(\Omega)$ there exists the
convolution kernel $\widetilde{k}_{A}\in\mathcal D'_{{\rm
L^{\ast}}}(\Omega\times\Omega)$, such that
$$Af(x)=(f\sLs \widetilde{k}_{A}(x))(x),\quad f\in C^{\infty}_{{\rm
L}^*}(\overline{\Omega}),$$ where we write $$\widetilde
k_{A}(x)(y):=\widetilde k_{A}(x,y)$$ in the sense of distributions.
\end{prop}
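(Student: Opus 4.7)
The plan is to assemble the statement from the Schwartz kernel theorem (Theorem \ref{TH: 6.1}) together with the definition \eqref{EQ: KernelConv} and the L-Fourier machinery, essentially making rigorous the informal calculation already displayed between \eqref{EQ: KernelConv} and the statement of the proposition. First I would apply Theorem \ref{TH: 6.1} to the separately continuous bilinear form $(f,g)\mapsto\langle Af,g\rangle$ on $C^\infty_{\rm L}(\overline\Omega)\times C^\infty_{\rm L}(\overline\Omega)$ (after reducing the duality pairing to this form via \eqref{EQ:duality-dist} and $C^\infty_{\rm L^*}(\overline\Omega)$-test functions, as in Definition \ref{DistrSp}); this produces the Schwartz kernel $K_A\in\mathcal D'_{\rm L}(\Omega\times\Omega)$ for which \eqref{EQ:int1}, and hence \eqref{EQ:int2}, hold in the distributional sense.

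Next I would introduce $k_A$ by the series \eqref{EQ: KernelConv} and verify that it defines an element of $\mathcal D'_{\rm L}(\Omega\times\Omega)$. Write the coefficient of $u_\eta(z)$ as $u_\eta^{-1}(x)c_\eta(x)$, where
$$
c_\eta(x):=\int_{\Omega}K_A(x,y)u_\eta(y)\,dy.
$$
By Proposition \ref{TH: UniBdd} applied to $K_A$ (viewed as an $\mathrm L$-distribution of some finite order $k$) and the continuity of the L-Fourier transform $\mathcal F_{\rm L}$ on $C^\infty_{\rm L}(\overline\Omega)$ in the seminorms \eqref{EQ:L-top}, the map $\eta\mapsto c_\eta$ grows at most polynomially in $\eta$ in the dual topology of $\mathcal D'_{\rm L}(\Omega)$. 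The WZ-hypothesis provides $|u_\eta^{-1}(x)|\leq C\langle\eta\rangle^{N}$ uniformly in $x\in\overline\Omega$, so $u_\eta^{-1}(x)c_\eta(x)$ remains polynomially bounded in $\eta$; hence the series \eqref{EQ: KernelConv} is the L-Fourier expansion in the $z$-variable of an element of $\mathcal D'_{\rm L}(\Omega\times\Omega)$, and its $z$-Fourier coefficients are exactly those prescribed by \eqref{EQ:int3}.

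With $k_A$ in hand, the convolution identity is immediate: for $f\in C^\infty_{\rm L}(\overline\Omega)$, substituting the Fourier inversion \eqref{InvFourierTr0} into \eqref{EQ:int2}, inserting \eqref{EQ:int3}, and recalling the definition \eqref{EQ: CONV1} of the L-convolution gives
$$
Af(x)=\sum_{\eta\in\ind}\widehat f(\eta)\,\widehat{k_A}(x,\eta)\,u_\eta(x)=(f\sL k_A(x))(x).
$$
The second half of the proposition, concerning operators mapping $C^\infty_{{\rm L}^*}(\overline\Omega)\to\mathcal D'_{{\rm L}^*}(\Omega)$, is completely parallel: one applies the ${\rm L}^*$-analogue of Theorem \ref{TH: 6.1} (which holds by symmetry of the construction of Section \ref{SEC:Schwartz} under ${\rm L}\leftrightarrow{\rm L}^*$, $u_\xi\leftrightarrow v_\xi$) to obtain $\widetilde K_A$, and then defines $\widetilde k_A$ by the analogue of \eqref{EQ: KernelConv} with $u_\eta$ replaced by $v_\eta$, invoking the WZ-assumption on $\{v_\xi\}$ in place of $\{u_\xi\}$.

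The main technical point is the convergence of \eqref{EQ: KernelConv} in $\mathcal D'_{\rm L}(\Omega\times\Omega)$, and specifically the polynomial control of $c_\eta$ needed to offset the $\langle\eta\rangle^N$ blow-up of $u_\eta^{-1}$. This rests on a twofold use of the WZ-condition (to invert $u_\eta$) together with the quantitative form of the kernel theorem proved via Lemma \ref{LEM: 6.1} and Corollary \ref{LEM: 6.2}, which is where the order $s_0$ of Assumption \ref{Assumption_4} enters to guarantee that the bilinear pairing against $K_A$ yields only a polynomial growth in $\eta$.
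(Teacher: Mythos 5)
Your proof follows essentially the same route as the paper: Section~\ref{SEC:Schwartz} constructs $K_A$ via Theorem~\ref{TH: 6.1}, defines $k_A$ by \eqref{EQ: KernelConv}, invokes the WZ bound $\inf_x|u_\eta(x)|\geq C\langle\eta\rangle^{-N}$ to control the series, and then computes $Af(x)=(f\sL k_A(x))(x)$ by combining \eqref{EQ:int2} with \eqref{EQ:int3}—and Proposition~\ref{PROP:conv-kernel} is stated in the paper precisely as a summary of that chain of arguments. You in fact spell out a step the paper leaves implicit, namely the polynomial-in-$\eta$ bound on $c_\eta(x)=\int_\Omega K_A(x,y)u_\eta(y)\,dy$ coming from the finite order of $K_A$ and the seminorms \eqref{EQ:L-top}, which is what makes the WZ offset actually close the estimate.
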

In the last formula we refer to \eqref{EQ: CONV2} for the
definition of the ${\rm L}^*$-convolution $\sLs$.

\section{${\rm L}$-Quantization and and full symbols}
\label{SEC:quantization}

In this section we describe the ${\rm L}$-quantization induced by the boundary value problem
${\rm L}_\Omega$. From now on we will assume
that the system of functions $\{u_{\xi}:\; \xi\in\ind\}$ is a ${\rm WZ}$-system
in the sense of Definition \ref{DEF: WZ-system}. Later, we will make some remarks
on what happens when this assumption is not satisfied.

\begin{defn}[${\rm L}$-Symbols of operators on $\Omega$] \label{$L$--Symbols}
The ${\rm L}$-symbol of a linear continuous
operator $$A:C^{\infty}_{{\rm L}}(\overline{\Omega})\rightarrow
\mathcal D'_{{\rm L}}(\Omega)$$ at $x\in\Omega$ and
$\xi\in\ind$ is defined by
$$\sigma_{A}(x, \xi):=\widehat{k_{A}(x)}(\xi)=\mathcal F_{{\rm L}}(k_{A}(x))(\xi).$$
Hence, we can also write
$$\sigma_{A}(x, \xi)=\int_{\Omega}k_{A}(x,y)\overline{v_{\xi}(y)}dy=
\langle k_{A}(x),\overline{v_{\xi}}\rangle.$$
\end{defn}

By the ${\rm L}$-Fourier inversion formula the convolution kernel
can be regained from the symbol:
\begin{equation}
\label{Kernel} k_{A}(x, y)=\sum_{\xi\in\ind}\sigma_{A}(x,
\xi)u_{\xi}(y),
\end{equation}
all in the sense of ${\rm L}$-distributions. We now show that an
operator $A$ can be represented by its symbol:

\begin{theorem}[${\rm L}$--quantization] \label{QuanOper}
Let $$A:C^{\infty}_{{\rm L}}(\overline{\Omega})\rightarrow
C^{\infty}_{{\rm L}}(\overline{\Omega})$$ be a continuous linear
operator with {\rm L}-symbol $\sigma_{A}$. Then
\begin{equation}\label{Quantization}
Af(x)=\sum_{\xi\in\ind}
u_{\xi}(x)\sigma_{A}(x, \xi) \widehat{f}(\xi)
\end{equation}
for every $f\in C^{\infty}_{{\rm L}}(\overline{\Omega})$ and
$x\in\Omega$.
The {\rm L}-symbol $\sigma_{A}$ satisfies
\begin{equation}\label{FormSymb}
\sigma_{A}(x,\xi)=u_{\xi}(x)^{-1}(Au_{\xi})(x)
\end{equation}
for all $x\in\Omega$ and $\xi\in\ind$.
\end{theorem}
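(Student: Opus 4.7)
The plan is to derive \eqref{Quantization} directly from the convolution representation of $A$ established in Proposition \ref{PROP:conv-kernel}, and then obtain \eqref{FormSymb} by testing on the basis elements $u_\eta$ and invoking biorthogonality together with the WZ-property.

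First I would unpack Proposition \ref{PROP:conv-kernel}: since $A:C^\infty_L(\overline{\Omega})\to C^\infty_L(\overline{\Omega})\hookrightarrow \mathcal D'_L(\Omega)$ is continuous linear and we are working with a WZ-system, we have the convolution kernel $k_A\in\mathcal D'_L(\Omega\times\Omega)$ with
$$Af(x)=(f\sL k_A(x))(x), \quad f\in C^\infty_L(\overline{\Omega}).$$
Now by Definition \ref{Convolution} of the L-convolution, for each fixed $x\in\Omega$,
$$(f\sL k_A(x))(x)=\sum_{\xi\in\ind}\widehat{f}(\xi)\,\widehat{k_A(x)}(\xi)\,u_\xi(x),$$
where the pairing is legitimate since $\widehat{f}\in\mathcal S(\ind)$ by Proposition \ref{LEM: FTinS}, while $\widehat{k_A(x)}\in\mathcal S'(\ind)$ for each $x$. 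Substituting the definition $\sigma_A(x,\xi):=\widehat{k_A(x)}(\xi)$ from Definition \ref{$L$--Symbols} yields \eqref{Quantization}.

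For the symbol formula \eqref{FormSymb}, I would apply \eqref{Quantization} with $f=u_\eta$. By the biorthogonality relations \eqref{BiorthProp},
$$\widehat{u_\eta}(\xi)=\int_\Omega u_\eta(x)\overline{v_\xi(x)}\,dx=\delta_{\xi\eta},$$
so the series in \eqref{Quantization} collapses to a single term, giving
$$(Au_\eta)(x)=u_\eta(x)\,\sigma_A(x,\eta).$$
Since $\{u_\xi\}$ is a WZ-system, the pointwise inverse $u_\eta(x)^{-1}$ exists on $\overline{\Omega}$, and dividing through yields \eqref{FormSymb}.

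The main subtleties I anticipate are just bookkeeping rather than genuine obstacles: one must justify that $\sigma_A(x,\cdot)$ has at most polynomial growth in $\xi$ (so the series in \eqref{Quantization} converges as a pairing of $\mathcal S(\ind)$ against $\mathcal S'(\ind)$ on the Fourier side, and hence termwise after multiplication by the basis functions $u_\xi(x)$), and that the formal identification $\widehat{k_A(x)}(\xi)=\int_\Omega k_A(x,y)\overline{v_\xi(y)}\,dy$ used in Definition \ref{$L$--Symbols} is compatible with the distributional pairing. Both follow from the mapping properties of $\mathcal F_L$ in Proposition \ref{LEM: FTinS} applied fiberwise in $x$, together with the observation that the kernel $k_A(x,\cdot)$ lies in $\mathcal D'_L(\Omega)$ for each $x$. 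Once these are in place, the two displayed identities follow without further computation.
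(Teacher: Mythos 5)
Your proposal is correct and follows essentially the same route as the paper: the paper also starts from the convolution representation $Af(x)=(f\sL k_A(x))(x)$ (phrased via a frozen-coefficient operator $A_{x_0}$ and the identity $\widehat{f\sL k_{x_0}}=\widehat{f}\,\widehat{k_{x_0}}$, which is just the definition of $\sL$ combined with Fourier inversion), and then obtains \eqref{FormSymb} by substituting $f=u_\xi$ and using $\widehat{u_\xi}(\eta)=\delta_{\xi\eta}$ exactly as you do. The convergence remarks you flag are handled in the paper by the same $\mathcal S(\ind)$–$\mathcal S'(\ind)$ pairing observation already recorded in Definition \ref{Convolution}.
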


\begin{proof}
Let us define a convolution operator $A_{x_{0}}\in\mathcal
L(C^{\infty}_{{\rm L}}(\overline\Omega))$ by the kernel
$$k_{x_{0}}(x):=k_{A}(x_{0},x),$$ 
i.e. by
$$A_{x_{0}}f(x):=(f\sL k_{x_{0}})(x),$$
with the usual distributional interpretation of the appearing quantities.
Thus
$$\sigma_{A_{x_{0}}}(x,\xi)=\widehat{k_{x_{0}}}(\xi)=\sigma_{A}(x_{0},\xi),$$
so that we have
$$
A_{x_{0}}f(x)=\sum_{\xi\in\ind}\widehat{A_{x_{0}}f}(\xi)
 u_{\xi}(x)
=\sum_{\xi\in\ind}\widehat{f}(\xi) \sigma_{A}(x_{0}, \xi)
 u_{\xi}(x),
$$
where we used that $\widehat{f\sL k_{x_{0}}
}=\widehat{f}\widehat{k_{x_{0}}}$ by the same calculations as in
Lemma \ref{ConvProp}. This implies \eqref{Quantization} because
$$Af(x)=A_{x}f(x).$$ For \eqref{FormSymb} , we can then calculate
$$
u_{\xi}(x)^{-1}(Au_{\xi})(x)
=u_{\xi}(x)^{-1}\sum_{\eta\in\ind} u_{\eta}(x) \sigma_{A}(x, \eta)
\widehat{u_{\xi}}(\eta)
 =\sigma_{A}(x, \xi),
$$
completing the proof.
\end{proof}

As a consequence of the proof and of various formulae for kernels and convolutions,
we can collect several formulae for the symbol under the assumption that
the biorthogonal system $u_\xi$ is a WZ-system:

\begin{corollary}\label{COR: SymFor}
We have the following equivalent formulae for {\rm L}-symbols:
\begin{align*}
{\rm (i)} \,\,\,\,\, \sigma_{A}(x, \xi)
&=\int_{\Omega}k_{A}(x,y)\overline{v_{\xi}(y)}dy;\\
{\rm (ii)} \,\,\,\,\, \sigma_{A}(x, \xi)&=u_{\xi}^{-1}(x)(Au_{\xi})(x);\\
{\rm (iii)} \,\,\,\,\,
\sigma_{A}(x,\xi)&=u_{\xi}^{-1}(x)\int_{\Omega}K_{A}(x,y)u_{\xi}(y)dy;\\
{\rm (iv)} \,\,\,\,\, \sigma_{A}(x,
\xi)&=u_{\xi}^{-1}(x)\int_{\Omega}\int_{\Omega}F(x,y,z)k_{A}(x,y)
u_{\xi}(z)dydz.
\end{align*}
Here and in the sequel we write $u_{\xi}^{-1}(x)=u_{\xi}(x)^{-1}.$
Formula {\rm (iii)} also implies
\begin{align*}
{\rm (v)} \,\,\,\,\, K_A(x,y)&=
\sum_{\xi\in\ind}  u_\xi(x) \sigma_A(x,\xi) \overline{v_\xi(y)}.
\end{align*}
\end{corollary}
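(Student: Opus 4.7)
The plan is to derive (i)--(v) in the order stated, since each subsequent identity uses only the preceding ones together with material already established. Throughout, the hypothesis that $\{u_\xi\}$ is a WZ-system is what makes the factor $u_\xi^{-1}(x)$ in (ii)--(v) meaningful, via Definition \ref{DEF: WZ-system}; without it the identities (ii)--(iv) make no pointwise sense.

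First, (i) is essentially the unpacking of the definition of $\sigma_A$. By Definition \ref{$L$--Symbols} we have $\sigma_A(x,\xi) = \mathcal F_{\rm L}(k_A(x))(\xi)$, and by the definition of the L-Fourier transform in \eqref{FourierTr} this equals $\int_\Omega k_A(x,y)\overline{v_\xi(y)}\,dy$, interpreted distributionally as $\langle k_A(x),\overline{v_\xi}\rangle$ when $k_A(x)$ is not an integrable function. Formula (ii) is already established; it is \eqref{FormSymb} in Theorem \ref{QuanOper}.

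For (iii), I would substitute $f=u_\xi$ into the Schwartz kernel representation \eqref{EQ:int1}, giving $(Au_\xi)(x) = \int_\Omega K_A(x,y)u_\xi(y)\,dy$, and then multiply both sides by $u_\xi^{-1}(x)$, using (ii) on the left; the WZ-property ensures $u_\xi^{-1}(x)$ is defined. For (iv), the strategy is to combine (ii) with the integral form \eqref{CONV} of the L-convolution and the fact (Proposition \ref{PROP:conv-kernel}) that $Af(x)=(f\sL k_A(x))(x)$. Taking $f=u_\xi$ and using commutativity of $\sL$ (Proposition \ref{ConvProp}) together with the symmetry $F(x,y,z)=F(x,z,y)$ evident from its definition, one gets
\begin{equation*}
(Au_\xi)(x) = (u_\xi \sL k_A(x))(x) = \int_\Omega\int_\Omega F(x,y,z)\, k_A(x,y)\, u_\xi(z)\,dy\,dz,
\end{equation*}
and dividing by $u_\xi(x)$ yields (iv).

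Finally, (v) is the Fourier-type inversion of (iii) in the second variable. For fixed $x$, expand $K_A(x,\cdot)$ with respect to the biorthogonal system: writing $K_A(x,y) = \sum_\xi a_\xi(x)\,\overline{v_\xi(y)}$, multiplying by $u_\eta(y)$, integrating and using the biorthogonality \eqref{BiorthProp} gives $a_\eta(x) = \int_\Omega K_A(x,y)u_\eta(y)\,dy$, which by (iii) equals $u_\eta(x)\sigma_A(x,\eta)$; substituting back produces (v). The only mildly delicate step is justifying the interchanges of sums and integrals in (iv) and (v) when $k_A(x)$ or $K_A$ is only a distribution; I would handle this by reading all identities in the sense of $\mathcal D'_{\rm L}$, precisely as in the construction of $k_A$ in \eqref{EQ: KernelConv}, where convergence holds by Assumption \ref{Assumption_4} together with the WZ growth bound on $u_\xi^{-1}$. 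No genuine obstacle arises; the content is entirely bookkeeping between the three representations (Schwartz kernel, convolution kernel, and symbol).
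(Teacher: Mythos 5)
Your proposal is correct and follows essentially the same route as the paper, which states the corollary as a direct consequence of Definition \ref{$L$--Symbols} (giving (i)), formula \eqref{FormSymb} of Theorem \ref{QuanOper} (giving (ii)), the kernel relations \eqref{EQ:int1}--\eqref{EQ:int3} (giving (iii) and, by biorthogonal inversion, (v)), and the integral form \eqref{CONV} of the ${\rm L}$-convolution together with Proposition \ref{PROP:conv-kernel} (giving (iv)). Your explicit remarks on the symmetry of $F$ in $y,z$ and on the distributional reading of the interchanges are exactly the bookkeeping the paper leaves implicit.
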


In the case when $\{u_{\xi}: \; \xi\in\ind\}$ is not a
WZ-system, we can still understand the ${\rm L}$-symbol
$\sigma_{A}$ of the operator $A$ as a function on
$\overline{\Omega}\times\ind$, for which the equality
$$
u_{\xi}(x)\sigma_{A}(x,\xi)=\int_{\Omega}K_{A}(x,y)u_{\xi}(y)dy
$$
holds for all $\xi$ in $\ind$ and for $x\in\overline{\Omega}$.
Of course, this implies certain restrictions on the zeros of the
Schwartz kernel $K_A$. Such restrictions may be considered
natural from the point of view of the scope of problems that
can be treated by our approach in the case when the eigenfunctions
$u_\xi(x)$ may vanish at some points $x$.

\vspace{3mm}

Similarly, we can introduce an analogous notion of the ${\rm L^{\ast}}$-quantization.

\begin{defn}[${\rm L^{\ast}}$-Symbols of operators on $\Omega$] \label{$L$--Symbols_star}
The ${\rm L^{\ast}}$-symbol of a linear
continuous operator $$A:C^{\infty}_{{\rm
L^{\ast}}}(\overline{\Omega})\rightarrow \mathcal D'_{{\rm L}^*}(\Omega)$$ 
at $x\in{\Omega}$ and
$\xi\in\ind$ is defined by
$$\tau_{A}(x, \xi):=\widehat{\widetilde{k}_{A}(x)}_{\ast}(\xi)=\mathcal F_{{\rm L^{\ast}}}(\widetilde{k}_{A}(x))(\xi).$$
We can also write
$$\tau_{A}(x, \xi)=\int_{\Omega}\widetilde{k}_{A}(x,y)\overline{u_{\xi}(y)}dy=
\langle\widetilde{k}_{A}(x),\overline{u_{\xi}}\rangle.$$
\end{defn}

By the ${\rm L^{\ast}}$-Fourier inversion formula the convolution
kernel can be regained from the symbol:
\begin{equation}
\label{Kernel_star} \widetilde{k}_{A}(x, y)=\sum_{\xi\in\ind}\tau_{A}(x, \xi)v_{\xi}(y)
\end{equation}
in the sense of ${\rm L^{\ast}}$--distributions.
Analogously to the L-quantization, we have:

\begin{corollary}[${\rm L^{\ast}}$-quantization] \label{QuanOper_star}
Let $\tau_{A}$ be the ${\rm L}^*$-symbol of a
continuous linear operator $$A:C^{\infty}_{{\rm
L^{\ast}}}(\overline{\Omega})\rightarrow C^{\infty}_{{\rm
L^{\ast}}}(\overline{\Omega}).$$ Then
\begin{equation}
\label{Quantization_star} Af(x)=\sum_{\xi\in\ind}
v_{\xi}(x) \tau_{A}(x, \xi)  \widehat{f}_{\ast}(\xi)
\end{equation}
for every $f\in C^{\infty}_{{\rm L^{\ast}}}(\overline{\Omega})$
and $x\in\Omega$.
For all
$x\in\Omega$ and $\xi\in\ind$, we have
\begin{equation}
\label{FormSymb_star} \tau_{A}(x,
\xi)=v_{\xi}(x)^{-1}(Av_{\xi})(x).
\end{equation}
We also have the following equivalent formulae for the ${\rm L^{\ast}}$-symbol:
\begin{align*}
{\rm (i)} \,\,\,\,\, \tau_{A}(x, \xi)
&=\int_{\Omega}\widetilde{k}_{A}(x,y)\overline{u_{\xi}(y)}dy;\\
{\rm (ii)} \,\,\,\,\,
\tau_{A}(x,\xi)&=v_{\xi}^{-1}(x)\int_{\Omega}\widetilde{K}_{A}(x,y)v_{\xi}(y)dy.\\
\end{align*}
\end{corollary}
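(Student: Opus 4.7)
The plan is to mirror the proof of Theorem \ref{QuanOper} \emph{mutatis mutandis}, swapping the roles of $u_\xi$ and $v_\xi$ and replacing ${\rm L}$ by ${\rm L}^*$ throughout. All ingredients are already in hand: Proposition \ref{PROP:conv-kernel} supplies, under the WZ assumption on $\{v_\xi\}$, the ${\rm L}^*$-convolution kernel $\widetilde{k}_A$ with $Af(x)=(f\sLs \widetilde{k}_A(x))(x)$; Proposition \ref{LEM: FTinS} provides the ${\rm L}^*$-Fourier inversion formula \eqref{ConjInvFourierTr}; and the ${\rm L}^*$-analogue of Proposition \ref{ConvProp}, proved by the same direct calculation using the biorthogonality \eqref{BiorthProp}, gives $\widehat{f\sLs g}_* = \widehat{f}_*\,\widehat{g}_*$.

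First I would fix $x_0\in\Omega$ and freeze the first argument by introducing the auxiliary ${\rm L}^*$-convolution operator $A_{x_0}\in\mathcal L(C^\infty_{{\rm L}^*}(\overline\Omega))$ defined by the kernel $\widetilde{k}_{x_0}(y):=\widetilde{k}_A(x_0,y)$, so that $A_{x_0}f(x):=(f\sLs \widetilde{k}_{x_0})(x)$. Its ${\rm L}^*$-symbol is the ${\rm L}^*$-Fourier transform of $\widetilde{k}_{x_0}$, which by Definition \ref{$L$--Symbols_star} equals $\tau_A(x_0,\xi)$ and is independent of $x$. Expanding via \eqref{ConjInvFourierTr} and the multiplicativity of $\sLs$ on the Fourier side yields
\begin{equation*}
A_{x_0}f(x)=\sum_{\xi\in\ind}\widehat{A_{x_0}f}_{\ast}(\xi)\,v_\xi(x)=\sum_{\xi\in\ind}\widehat{f}_{\ast}(\xi)\,\tau_A(x_0,\xi)\,v_\xi(x).
\end{equation*}
Setting $x_0=x$ produces the quantization formula \eqref{Quantization_star}, since by construction $Af(x)=A_xf(x)$.

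For \eqref{FormSymb_star}, I would plug $f=v_\xi$ into \eqref{Quantization_star}: biorthogonality \eqref{BiorthProp} gives $\widehat{v_\xi}_{\ast}(\eta)=\int_\Omega v_\xi(y)\overline{u_\eta(y)}\,dy=\delta_{\xi\eta}$, so only the $\eta=\xi$ term survives, yielding $Av_\xi(x)=v_\xi(x)\tau_A(x,\xi)$; dividing by $v_\xi(x)$, which is licit since $\{v_\xi\}$ is a WZ-system, gives \eqref{FormSymb_star}. Formula (i) of the corollary is immediate by writing out $\widehat{\widetilde{k}_A(x)}_{\ast}(\xi)$ via \eqref{ConjFourierTr}; formula (ii) follows by comparing \eqref{FormSymb_star} with the ${\rm L}^*$-analogue of the kernel identity \eqref{EQ:int3}, namely $\tau_A(x,\xi)v_\xi(x)=\int_\Omega \widetilde{K}_A(x,y)v_\xi(y)\,dy$, which is obtained by applying $A$ to $v_\xi$ using its Schwartz kernel.

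There is no substantive obstacle here; the proof is a straight transcription of the one for Theorem \ref{QuanOper} together with Corollary \ref{COR: SymFor}. The only care required is bookkeeping: making sure that in each step the appropriate biorthogonal partner appears (so that the Fourier pair $\mathcal F_{{\rm L}^*}$, $\mathcal F_{{\rm L}^*}^{-1}$ is consistently used, with $u_\xi$ in the integrand of \eqref{ConjFourierTr} and $v_\xi$ in the reconstruction \eqref{ConjInvFourierTr}), and that the WZ-hypothesis on $\{v_\xi\}$, standing in force throughout this section, legitimises the pointwise inverse $v_\xi(x)^{-1}$ in \eqref{FormSymb_star} and in (ii).
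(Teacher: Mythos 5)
Your proposal is correct and follows exactly the route the paper intends: the paper presents the corollary without a separate proof, as an ``analogous'' transcription of Theorem \ref{QuanOper} with $u_\xi \leftrightarrow v_\xi$, $\sL \leftrightarrow \sLs$, and $\mathcal F_{\rm L} \leftrightarrow \mathcal F_{{\rm L}^*}$, which is precisely what you carry out. Your bookkeeping of the biorthogonal partners (in particular $\widehat{v_\xi}_{\ast}(\eta)=\delta_{\xi\eta}$) and the derivation of (i)--(ii) from Definition \ref{$L$--Symbols_star} and the ${\rm L}^*$-analogue of \eqref{EQ:int3} are accurate.
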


We now briefly describe the 
notion of Fourier multipliers which is a natural name for operators with 
symbols independent of $x$. In \cite{Delgado-Ruzhansky-Togmagambetov:nuclear}
the analysis of this paper is applied to investigate the spectral properties of
such operators, so we can be brief here.

\begin{defn}\label{Lfm} 
Let $A:C_L^{\infty}\omp\rightarrow C_L^{\infty}\omp$ 
be a continuous linear operator. 
We will say
 that $A$ is an $L$-Fourier multiplier if it satisfies
\[
\efel (Af)(\xi)=\sigma(\xi)\efel (f)(\xi),\; f\in C_{L}^{\infty}\omp,
\] 
for some $\sigma:\ind\rightarrow\mathbb C$.
Analogously we define $L^*$-Fourier multipliers:
Let $B:C_{L^*}^{\infty}\omp\rightarrow C_{L^*}^{\infty}\omp$ 
be a continuous linear operator. We will say
 that $B$ is an $L^*$-Fourier multiplier if it satisfies
\[
\efela (Bf)(\xi)=\tau(\xi)\efela (f)(\xi),\, f\in C_{L^*}^{\infty}\omp,
\]
for some $\tau:\ind\rightarrow\mathbb C$.
\end{defn}

As used in \cite{Delgado-Ruzhansky-Togmagambetov:nuclear}, we have the
following simple relation between the symbols of
an operator and its adjoint.

\begin{prop}\label{admu} 
The operator $A$ is an $L$-Fourier multiplier by $\sigma(\xi)$ if and only if
$A^*$ is an $L^*$-Fourier multiplier by $\overline{\sigma(\xi)}$.
\end{prop}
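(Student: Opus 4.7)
The plan is to reduce the multiplier condition to an eigenfunction condition on the $u_\xi$ (resp.\ $v_\xi$), then pass to the adjoint via the $L^2$-pairing, and finally invoke biorthogonality to read off the $L^*$-Fourier coefficients. The main technical care is bookkeeping of the complex conjugate, because the $L$-Fourier transform uses $\overline{v_\xi}$ while the $L^*$-Fourier transform uses $\overline{u_\xi}$, so complex conjugation enters precisely once.

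First I would observe that, in view of the biorthogonality \eqref{BiorthProp}, one has $\widehat{u_\eta}(\xi)=(u_\eta,v_\xi)_{L^2}=\delta_{\eta\xi}$. Consequently the condition that $A$ is an $L$-Fourier multiplier with symbol $\sigma$ is equivalent to the eigenequation
\begin{equation*}
Au_\xi=\sigma(\xi)u_\xi\qquad\text{for every }\xi\in\ind,
\end{equation*}
since applying $\mathcal F_L$ to both sides of \eqref{Quantization} or testing against $v_\eta$ recovers the multiplier identity in Definition \ref{Lfm}. Analogously, $B$ is an $L^*$-Fourier multiplier with symbol $\tau$ iff $Bv_\xi=\tau(\xi)v_\xi$ for every $\xi\in\ind$.

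Next I would use the defining property of the adjoint, $(Af,g)_{L^2}=(f,A^*g)_{L^2}$, applied with $f=u_\eta$ and $g=v_\xi$. The left-hand side equals $(Au_\eta,v_\xi)_{L^2}=\sigma(\eta)(u_\eta,v_\xi)_{L^2}=\sigma(\eta)\delta_{\eta\xi}=\sigma(\xi)\delta_{\eta\xi}$, using the eigenequation. Taking the complex conjugate of the resulting identity $(u_\eta,A^*v_\xi)_{L^2}=\sigma(\xi)\delta_{\eta\xi}$ gives
\begin{equation*}
(A^*v_\xi,u_\eta)_{L^2}=\overline{\sigma(\xi)}\,\delta_{\eta\xi}.
\end{equation*}
But $(A^*v_\xi,u_\eta)_{L^2}$ is exactly $\widehat{A^*v_\xi}_{\ast}(\eta)$ by \eqref{ConjFourierTr}. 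Hence the $L^*$-Fourier coefficients of $A^*v_\xi$ coincide with those of $\overline{\sigma(\xi)}\,v_\xi$, and the $L^*$-inversion formula \eqref{ConjInvFourierTr0} yields $A^*v_\xi=\overline{\sigma(\xi)}v_\xi$. By the equivalence recorded in the first step (applied to $L^*$), this is precisely the statement that $A^*$ is an $L^*$-Fourier multiplier with symbol $\overline{\sigma(\xi)}$.

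For the converse direction one can either repeat the argument with the roles of $(u_\xi,v_\xi)$ and $(L,L^*)$ swapped, or simply note that $(A^*)^*=A$ and that $\overline{\overline{\sigma(\xi)}}=\sigma(\xi)$, so the forward implication applied to $A^*$ returns the statement for $A$. The only point that requires a word of justification is that the adjoint $A^*$ is well-defined as a continuous linear map $C_{L^*}^{\infty}(\overline{\Omega})\to C_{L^*}^{\infty}(\overline{\Omega})$, which is implicit in Definition \ref{Lfm} and the framework of $L^2$-duality \eqref{EQ:duality} set up in Section \ref{SEC:TD}; this is the only mildly delicate point, while the rest is an almost formal consequence of biorthogonality and the Fourier inversion formulae.
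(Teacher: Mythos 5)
Your proof is correct, but it takes a genuinely different route from the one in the paper. The paper works directly at the level of Fourier coefficients via the Parseval identity \eqref{Parseval}: it computes $(Af,g)_{L^2}=\sum_{\xi}\widehat{Af}(\xi)\overline{\widehat{g}_{*}(\xi)}=\sum_{\xi}\sigma(\xi)\widehat{f}(\xi)\overline{\widehat{g}_{*}(\xi)}=\sum_{\xi}\widehat{f}(\xi)\overline{\overline{\sigma(\xi)}\widehat{g}_{*}(\xi)}$, compares this with $(f,A^{*}g)_{L^2}=\sum_{\xi}\widehat{f}(\xi)\overline{\widehat{A^{*}g}_{*}(\xi)}$, and reads off $\widehat{A^{*}g}_{*}(\xi)=\overline{\sigma(\xi)}\widehat{g}_{*}(\xi)$. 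You instead first show the multiplier condition is equivalent to the eigenequation $Au_{\xi}=\sigma(\xi)u_{\xi}$ (resp.\ $A^{*}v_{\xi}=\overline{\sigma(\xi)}v_{\xi}$), and then propagate this through the adjoint pairing $(Au_{\eta},v_{\xi})=(u_{\eta},A^{*}v_{\xi})$, using biorthogonality so that identification of coefficients is automatic through the Kronecker delta. Your route is more spectral in flavour: it makes transparent that an $L$-multiplier is precisely an operator acting diagonally on $\{u_{\xi}\}$, and it isolates the single moment where complex conjugation enters (flipping the order in the Hermitian pairing). The paper's Parseval computation, by contrast, never leaves the coefficient level, which is slightly more economical and transfers directly to situations where $A$ is only given through its action on coefficients; it also avoids the (small) extra step of justifying the passage from the pointwise eigenequation back to the multiplier identity for general $f$ via continuity. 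Both arguments are sound and of comparable length.
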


\begin{proof} 
It is enough to prove the `only if' implication.
First, by the Parceval identity
$$
(Af,g)_{L^2}=\sum\limits_{\xi\in\ind}\widehat{Af}(\xi)\overline{\widehat{g_{\ast}}(\xi)}
=\sum\limits_{\xi\in\ind}\sigma(\xi)\widehat{f}(\xi)\overline{\widehat{g_{\ast}}(\xi)}
=\sum\limits_{\xi\in\ind}\widehat{f}(\xi)\overline{\overline{\sigma(\xi)}\widehat{g_{\ast}}(\xi)}.
$$
At the same time
\[
(Af,g)_{L^2}=(f,A^*g)_{L^2}=\sum\limits_{\xi\in\ind}\widehat{f}(\xi)\overline{\widehat{A^*g_{\ast}}(\xi)}.\]
Consequently,
\[
\widehat{A^*g_{\ast}}(\xi)=\overline{\sigma(\xi)}\widehat{g_{\ast}}(\xi),
\]
i.e. $A^*$ is an $L^*$-multiplier by  $\overline{\sigma(\xi)}$.
\end{proof}

\section{Difference operators and symbolic calculus}
\label{SEC:differences}

In this section we discuss difference operators that will be instrumental in defining symbol
classes for the symbolic calculus of operators. An interesting new feature of these operators
compared to previous settings
is that they will be also dependent on a point $x\in\Omega$.

Let $q_{j}\in C^{\infty}({\Omega}\times{\Omega})$, $j=1,\ldots,l$, be a given family
of smooth functions.
We will call the collection of $q_j$'s {\em {\rm L}-strongly admissible} if the following properties hold:
\begin{itemize}
\item For every $x\in\Omega$, the multiplication by $q_{j}(x,\cdot)$
is a continuous linear mapping on
 $C^{\infty}_{{\rm L}}(\overline{\Omega})$, for all $j=1,\ldots,l$;
\item $q_{j}(x,x)=0$ for all $j=1,\ldots,l$;
\item $
{\rm rank}(\nabla_{y}q_{1}(x,y), \ldots, \nabla_{y}q_{l}(x,y))|_{y=x}=n;
$
\item  the diagonal in $\Omega\times\Omega$ is the only set when all of
$q_j$'s vanish:
$$
\bigcap_{j=1}^l \left\{(x,y)\in\Omega\times\Omega: \, q_j(x,y)=0\right\}=\{(x,x):\, x\in\Omega\}.
$$
\end{itemize}

We note that the first property above implies that for every $x\in\Omega$, the multiplication by
$q_{j}(x,\cdot)$ is also well-defined and extends to a continuous linear mapping on
$\mathcal D'_{{\rm L}}(\Omega)$. Also, the last property above contains the second one
but we chose to still give it explicitly for the clarity of the exposition.

The collection of $q_j$'s with the above properties generalises the notion of a strongly
admissible collection of functions for difference operators introduced in
\cite{Ruzhansky-Turunen-Wirth:JFAA} in the context of compact Lie groups.
We will use the multi-index notation
$$
q^{\alpha}(x,y):=q^{\alpha_1}_{1}(x,y)\cdots q^{\alpha_l}_{l}(x,y).
$$
Analogously, the notion of an ${\rm L}^{*}$-strongly admissible collection suitable for the
conjugate problem is that of a family
$\widetilde{q}_{j}\in C^{\infty}({\Omega}\times{\Omega})$, $j=1,\ldots,l$, satisfying the properties:
\begin{itemize}
\item For every $x\in\Omega$, the multiplication by $\widetilde{q}_{j}(x,\cdot)$
is a continuous linear mapping on
 $C^{\infty}_{{\rm L}^{*}}(\overline{\Omega})$, for all $j=1,\ldots,l$;
\item $\widetilde{q}_{j}(x,x)=0$ for all $j=1,\ldots,l$;
\item $
{\rm rank}(\nabla_{y}\widetilde{q}_{1}(x,y), \ldots, \nabla_{y}\widetilde{q}_{l}(x,y))|_{y=x}=n;
$
\item  the diagonal in $\Omega\times\Omega$ is the only set when all of
$\widetilde{q}_j$'s vanish:
$$
\bigcap_{j=1}^l \left\{(x,y)\in\Omega\times\Omega: \, \widetilde{q}_j(x,y)=0\right\}=\{(x,x):\, x\in\Omega\}.
$$
\end{itemize}
We also write
$$
\widetilde{q}^{\alpha}(x,y):=\widetilde{q}^{\alpha_1}_{1}(x,y)\cdots
\widetilde{q}^{\alpha_l}_{l}(x,y).
$$
We now record the Taylor expansion formula with respect to a family of $q_j$'s,
which follows from expansions of functions $g$ and
$q^{\alpha}(e,\cdot)$ by the common Taylor series:

\begin{prop}\label{TaylorExp}
Any smooth function $g\in C^{\infty}({\Omega})$ can be
approximated by Taylor polynomial type expansions, i.e. for $e\in\Omega$, we have
$$g(x)=\sum_{|\alpha|<
N}\frac{1}{\alpha!}D^{(\alpha)}_{x}g(x)|_{x=e}\, q^{\alpha}(e,x)+\sum_{|\alpha|=
N}\frac{1}{\alpha!}q^{\alpha}(e,x)g_{N}(x)
$$
\begin{equation}
\sim\sum_{\alpha\geq
0}\frac{1}{\alpha!}D^{(\alpha)}_{x}g(x)|_{x=e}\, q^{\alpha}(e,x)
\label{TaylorExpFormula}
\end{equation}
in a neighborhood of $e\in\Omega$, where $g_{N}\in
C^{\infty}({\Omega})$ and
$D^{(\alpha)}_{x}g(x)|_{x=e}$ can be found from the recurrent formulae:
$D^{(0,\cdots,0)}_{x}:=I$ and for $\alpha\in\mathbb N_0^l$,
$$
\mathsf
\partial^{\beta}_{x}g(x)|_{x=e}=\sum_{|\alpha|\leq|\beta|}\frac{1}{\alpha!}
\left[\mathsf
\partial^{\beta}_{x}q^{\alpha}(e,x)\right]\Big|_{x=e}D^{(\alpha)}_{x}g(x)|_{x=e},
$$
where $\beta=(\beta_1, \ldots, \beta_n)$ and
$
\partial^{\beta}_{x}=\frac{\partial^{\beta_{1}}}{\partial x_{1}^{\beta_{1}}}\cdots
\frac{\partial^{\beta_{n}}}{\partial x_{n}^{\beta_{n}}}.
$
\end{prop}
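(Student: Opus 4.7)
The plan is to reduce this statement to the classical Taylor expansion of $g$ around $e$, using the functions $q_j(e,\cdot)$ as a (possibly over-complete) substitute for the Cartesian coordinates $x-e$. The key observation is that, because $q_j(e,e)=0$ and the Jacobian $J=(\partial_{x_i}q_j(e,x)|_{x=e})_{j,i}$ has rank $n$ by hypothesis, the first-order Taylor part of each $q_j(e,\cdot)$ at $e$ is a linear form in $x-e$ and the collection of these forms spans $(\mathbb R^n)^*$. Consequently, for each $|\alpha|=k$ the function $q^\alpha(e,x)$ vanishes to order exactly $k$ at $x=e$, and the span of $\{q^\alpha(e,\cdot)\}_{|\alpha|=k}$ surjects onto the homogeneous polynomials of degree $k$ in $x-e$ modulo higher-order vanishing terms.

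First I would write the ordinary Taylor expansion
$$
g(x)=\sum_{|\beta|<N}\frac{1}{\beta!}\partial^\beta g(e)(x-e)^\beta+\sum_{|\beta|=N}\frac{1}{\beta!}(x-e)^\beta R_\beta(x)
$$
with smooth remainders $R_\beta$ near $e$. Next I would define $D^{(\alpha)}_x g(x)|_{x=e}$ inductively on $|\alpha|$ by solving the stated recurrence: the base case $|\alpha|=0$ gives $D^{(0)}g(e)=g(e)$, and given $D^{(\gamma)}g(e)$ for all $|\gamma|<k$ we use the equations with $|\beta|=k$ to determine $\{D^{(\alpha)}g(e)\}_{|\alpha|=k}$. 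The top-order block of this linear system has entries $\frac{1}{\alpha!}[\partial^\beta q^\alpha(e,x)]|_{x=e}$ with $|\alpha|=|\beta|=k$, which (up to multinomial factors) is built from the leading linear parts of the $q_j(e,\cdot)$; the rank condition on $J$ ensures solvability, uniquely if $l=n$ and by choosing any solution when $l>n$.

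With these coefficients in hand, set
$$
P_N(x):=\sum_{|\alpha|<N}\frac{1}{\alpha!}D^{(\alpha)}g(e)\,q^\alpha(e,x).
$$
By construction, $\partial^\beta_x P_N(x)|_{x=e}=\partial^\beta g(e)$ for all $|\beta|<N$, so $g-P_N$ is smooth and vanishes to order $N$ at $x=e$. The classical Hadamard lemma applied iteratively then yields $g(x)-P_N(x)=\sum_{|\beta|=N}\frac{1}{\beta!}(x-e)^\beta \widetilde h_\beta(x)$ for smooth $\widetilde h_\beta$. Using the rank condition once more, each $(x-e)^\beta$ with $|\beta|=N$ is, locally near $e$, a smooth combination of the $q^\alpha(e,x)$ with $|\alpha|=N$ (plus terms of still higher order which can be absorbed by further Hadamard steps), so the remainder takes the desired form $\sum_{|\alpha|=N}\frac{1}{\alpha!}q^\alpha(e,x)g_N(x)$ for a smooth $g_N$.

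The main obstacle is bookkeeping the triangular structure of the recurrence in the case $l>n$, when the $D^{(\alpha)}g(e)$ are not uniquely determined by the partial derivatives $\partial^\beta g(e)$; one must consistently select a solution (for instance via a Moore--Penrose pseudo-inverse at each order) and verify that this selection preserves the smooth dependence on $e$ necessary for later sections. A secondary technical point is that the rewriting of the remainder in terms of $q^\alpha(e,x)$ rather than $(x-e)^\beta$ requires working in a neighborhood of $e$ small enough for the inversion of $J$ to be carried out smoothly, but since the proposition is local (``in a neighborhood of $e$'') this is harmless.
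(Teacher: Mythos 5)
Your argument is correct and follows the route the paper gestures at: the paper records the proposition with only the single preceding remark that it ``follows from expansions of functions $g$ and $q^{\alpha}(e,\cdot)$ by the common Taylor series,'' and your proposal spells out precisely that comparison (the triangular structure coming from $\partial^\beta_x q^\alpha(e,x)|_{x=e}=0$ whenever $|\beta|<|\alpha|$, solvability of the top-order block via the rank condition on $(\nabla_y q_j)|_{y=x}$, and the Hadamard/integral-remainder step to cast $g-P_N$ in the claimed form). The only small imprecision is the assertion that $q^\alpha(e,\cdot)$ vanishes to order \emph{exactly} $|\alpha|$ at $e$ for every $\alpha$ --- in general one is only guaranteed order $\geq|\alpha|$, and when $l>n$ some $q^\alpha$ will vanish to strictly higher order --- but your argument actually uses only the inequality $\geq|\alpha|$ and the surjectivity modulo higher-order terms, so this does not affect correctness.
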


Analogously, any function $g\in C^{\infty}({\Omega})$
can be approximated by Taylor polynomial type expansions corresponding to
the adjoint problem, i.e. we
have
$$g(x)=\sum_{|\alpha|<
N}\frac{1}{\alpha!}\widetilde{D}^{(\alpha)}_{x}g(x)|_{x=e}\, \widetilde{q}^{\alpha}(e,x)+\sum_{|\alpha|=
N}\frac{1}{\alpha!}\widetilde{q}^{\alpha}(e,x)g_{N}(x)
$$
\begin{equation}
\sim\sum_{\alpha\geq
0}\frac{1}{\alpha!}\widetilde{D}^{(\alpha)}_{x}g(x)|_{x=e}\, \widetilde{q}^{\alpha}(e,x)
\label{TaylorExpFormula}
\end{equation}
in a neighborhood of $e\in\Omega$, where $g_{N}\in
C^{\infty}({\Omega})$ and
$\widetilde{D}^{(\alpha)}_{x}g(x)|_{x=e}$ are found from the
recurrent formula: $\widetilde{D}^{(0,\cdots,0)}:=I$ and for
$\alpha\in\mathbb N_{0}^{l}$,
$$
\partial^{\beta}_{x}g(x)|_{x=e}=\sum_{|\alpha|\leq|\beta|}\frac{1}{\alpha!}
\left[
\partial^{k}_{x}\widetilde{q}^{\alpha}(e,x)\right]\Big|_{x=e}\widetilde{D}^{(\alpha)}_{x}g(x)|_{x=e},
$$
where $\beta=(\beta_1, \ldots, \beta_n)$, and $\partial^{\beta}$ is defined as in
Proposition \ref{TaylorExp}.

It can be seen that operators $D^{(\alpha)}$ and
$\widetilde{D}^{(\alpha)}$ are differential operators of order
$|\alpha|$.
We now define difference operators acting on Fourier coefficients.
Since the problem in general may lack any invariance or symmetry structure,
the introduced difference operators will depend on a point $x$ where they
will be taken when applied to symbols.

\begin{defn}\label{DEF: DifferenceOper}\label{DEF: DifferenceOper_2}
For WZ-systems, we define difference operator $\Delta_{q,(x)}^{\alpha}$ acting
on Fourier coefficients by any of the following equal expressions
\begin{align*}
\Delta_{q,(x)}^{\alpha}\widehat{f}(\xi)& = u_{\xi}^{-1}(x)
\int_{\Omega}\Big[\int_{\Omega}q^{\alpha}(x,y)F(x,y,z)f(z)dz\Big]u_{\xi}(y)dy
\\
& = u_{\xi}^{-1}(x)
\sum_{\eta\in\ind}\mathcal
F_{L}\Big(q^{\alpha}(x,\cdot)u_{\xi}(\cdot)\Big)(\eta)\widehat{f}(\eta)u_{\eta}(x)
\\
& = u_{\xi}^{-1}(x) \left([q^{\alpha}(x,\cdot)u_{\xi}(\cdot)]\sL
f\right)(x).
\end{align*}
Analogously, we define the difference operator
$\widetilde{\Delta}_{q,(x)}^{\alpha}$ acting on adjoint Fourier
coefficients by
$$
\widetilde{\Delta}_{\widetilde{q},(x)}^{\alpha}\widehat{f}_{\ast}(\xi):=
v_{\xi}^{-1}(x)\sum_{\eta\in\ind}\mathcal F_{{\rm
L^{\ast}}}\Big(\widetilde{q}^{\alpha}(x,\cdot)v_{\xi}(\cdot)\Big)(\eta)\widehat{f}_{\ast}(\eta)v_{\eta}(x).
$$
\end{defn}
For simplicity, if there is no confusion, for a fixed collection
of $q_j$'s, instead of $\Delta_{q,(x)}$ and
$\widetilde{\Delta}_{\widetilde{q},(x)}$ we will often simply write
$\Delta_{(x)}$ and $\widetilde{\Delta}_{(x)}$.

\medskip
Recalling that the general philosophy behind the symbolic constructions and the definition of
the classes of symbols is that since the symbol is the Fourier transform of the (convolution)
kernel of the operator, the difference conditions correspond to the multiplication of the kernel
by functions vanishing on its singular support and, therefore, lead to the improved behaviour
reducing the strength of the singularity. Indeed, applying difference operators to a symbol
and using formulae from Section \ref{SEC:quantization}, we
obtain
\begin{align}\nonumber
\Delta_{(x)}^{\alpha}a(x,\xi)&=u_{\xi}^{-1}(x)\sum_{\eta\in\ind}\mathcal
F_{L}\Big(q^{\alpha}(x,\cdot)u_{\xi}(\cdot)\Big)(\eta)a(x,\eta)u_{\eta}(x)\\ \nonumber
&=u_{\xi}^{-1}(x)\sum_{\eta\in\ind}\mathcal
F_{L}\Big(q^{\alpha}(x,\cdot)u_{\xi}(\cdot)\Big)(\eta)\int_{\Omega}K(x,y)u_{\eta}(y)dy\\ \nonumber
&=u_{\xi}^{-1}(x)\int_{\Omega}K(x,y)\left[\sum_{\eta\in\ind}\mathcal
F_{L}\Big(q^{\alpha}(x,\cdot)u_{\xi}(\cdot)\Big)(\eta)u_{\eta}(y)\right]dy\\
&=u_{\xi}^{-1}(x)\int_{\Omega}q^{\alpha}(x,y)K(x,y)u_{\xi}(y)dy. \label{EQ:diff-symb}
\end{align}
In view of the first property of the strongly admissible collections, for each
$x\in\Omega$, the multiplication
by $q^{\alpha}(x,\cdot)$ is well defined on $\mathcal D'_{{\rm L}}(\Omega)$.
Therefore, we can write \eqref{EQ:diff-symb} also in the distributional form
$$
 \Delta_{(x)}^{\alpha}a(x,\xi) = u_{\xi}^{-1}(x)\,
 \langle q^{\alpha}(x,\cdot)K(x,\cdot),u_{\xi}\rangle,
$$
providing more light on the nature of the difference operators applied to symbols.
In view of the preceding discussion this and the latter formula \eqref{EQ:diff-symb}
yield indeed the justification of the definition of
difference operators as in Definition \ref{DEF: DifferenceOper}.

Plugging the expression (v) from Corollary \ref{COR: SymFor} for the kernel in terms of the symbol
into \eqref{EQ:diff-symb}, namely, using
$$
K(x,y)=
\sum_{\eta\in\ind}  u_\eta(x) a(x,\eta) \overline{v_\eta(y)},
$$
we record another useful form of \eqref{EQ:diff-symb} to be used later as
\begin{align}\nonumber
 \Delta_{(x)}^{\alpha}a(x,\xi) &=
 u_{\xi}^{-1}(x)\int_{\Omega}q^{\alpha}(x,y)\left[
 \sum_{\eta\in\ind}  u_\eta(x) a(x,\eta) \overline{v_\eta(y)} \right]u_{\xi}(y)dy \\
 & = u_{\xi}^{-1}(x)   \sum_{\eta\in\ind}u_\eta(x) a(x,\eta)
 \left[ \int_{\Omega}q^{\alpha}(x,y) \overline{v_\eta(y)} u_{\xi}(y)dy\right],
 \label{EQ:diff-symb-2}
\end{align}
with the usual distributional interpretation of all the steps. In the sequel we will also require
the ${\rm L^*}$-version of this formula, which we record now as
\begin{equation}\label{EQ:diff-symb-3}
  \widetilde\Delta_{(x)}^{\alpha}a(x,\xi)=
  v_{\xi}^{-1}(x)   \sum_{\eta\in\ind}v_\eta(x) a(x,\eta)
 \left[ \int_{\Omega}\widetilde q^{\alpha}(x,y) \overline{u_\eta(y)} v_{\xi}(y)dy\right].
\end{equation}

Using such difference operators and derivatives $D^{(\alpha)}$ from
Proposition \ref{TaylorExp}
we can now define classes of symbols.

\begin{defn}[Symbol class $S^m_{\rho,\delta}(\overline{\Omega}\times\ind)$]\label{DEF: SymClass}
Let $m\in\mathbb R$ and $0\leq\delta,\rho\leq 1$. The ${\rm L}$-symbol class
$S^m_{\rho,\delta}(\overline{\Omega}\times\ind)$ consists of
those functions $a(x,\xi)$ which are smooth in $x$ for all
$\xi\in\ind$, and which satisfy
\begin{equation}\label{EQ:symbol-class}
  \left|\Delta_{(x)}^\alpha D^{(\beta)}_{x} a(x,\xi) \right|
        \leq C_{a\alpha\beta m}
                \ \langle\xi\rangle^{m-\rho|\alpha|+\delta|\beta|}
\end{equation}
for all $x\in\overline{\Omega}$, for all $\alpha,\beta\geq 0$, and for all $\xi\in\ind$.
Here the operators $D^{(\beta)}_{x}$ are defined in Proposition
\ref{TaylorExp}. We will often denote them simply by $D^{(\beta)}$.

The class $S^m_{1,0}(\overline{\Omega}\times\ind)$ will be often
denoted by writing simply $S^m(\overline{\Omega}\times\ind)$.
In \eqref{EQ:symbol-class}, we assume that the inequality is satisfied for $x\in\Omega$ and
it extends to the closure $\overline\Omega$.
Furthermore, we define
$$
S^{\infty}_{\rho,\delta}(\overline{\Omega}\times\ind):=\bigcup\limits_{m\in\mathbb
R}S^{m}_{\rho,\delta}(\overline{\Omega}\times\ind)
$$
and
$$
S^{-\infty}(\overline{\Omega}\times\ind):=\bigcap\limits_{m\in\mathbb
R}S^{m}(\overline{\Omega}\times\ind).
$$
When we have two L-strongly admissible collections, expressing one in terms of
the other similarly to Proposition \ref{TaylorExp} and arguing similarly to
\cite{Ruzhansky-Turunen-Wirth:JFAA}, we can convince ourselves that for $\rho>\delta$ the
definition of the symbol class does not depend on the choice of an
 L-strongly admissible collection.

Analogously, we define the ${\rm L^{\ast}}$-symbol class
$\widetilde{S}^m_{\rho,\delta}(\overline{\Omega}\times\ind)$
as the space of those functions $a(x,\xi)$ which are smooth in $x$ for
all $\xi\in\ind$, and which satisfy
\begin{equation*}
  \left|\widetilde{\Delta}_{(x)}^\alpha \widetilde{D}^{(\beta)} a(x,\xi) \right|
        \leq C_{a\alpha\beta m}
                \ \langle\xi\rangle^{m-\rho|\alpha|+\delta|\beta|}
\end{equation*}
for all $x\in\overline{\Omega}$, for all $\alpha,\beta\geq 0$, and for all $\xi\in\ind$.
Similarly, we can define classes
$\widetilde{S}^{\infty}_{\rho,\delta}(\overline{\Omega}\times\ind)$
and $\widetilde{S}^{-\infty}(\overline{\Omega}\times\ind)$.
\end{defn}

If $a\in S^m_{\rho,\delta}(\overline{\Omega}\times\ind)$, it is convenient to
denote by $a(X,D)={\rm Op_L}(a)$  the corresponding ${\rm
L}$-pseudo-differential operator defined by
\begin{equation}\label{EQ: L-tor-pseudo-def}
  {\rm Op_L}(a)f(x)=a(X,D)f(x):=\sum_{\xi\in\ind} u_{\xi}(x)\ a(x,\xi)\widehat{f}(\xi).
\end{equation}
The set of operators ${\rm Op_L}(a)$ of the form
(\ref{EQ: L-tor-pseudo-def}) with $a\in
S^m_{\rho,\delta}(\overline{\Omega}\times\ind)$ will be denoted by
${\rm Op_L}(S^m_{\rho,\delta} (\overline{\Omega}\times\ind))$, or by
$\Psi^m_{\rho,\delta} (\overline{\Omega}\times\ind)$. If an
operator $A$ satisfies $A\in{\rm
Op_L}(S^m_{\rho,\delta}(\overline{\Omega}\times\ind))$, we denote
its ${\rm L}$-symbol by $\sigma_{A}=\sigma_{A}(x, \xi), \,\,
x\in\overline{\Omega}, \, \xi\in\ind$. Naturally,
$\sigma_{a(X,D)}(x,\xi)=a(x,\xi)$.

Analogously, if $a\in
\widetilde{S}^m_{\rho,\delta}(\overline{\Omega}\times\ind)$,
we denote by $a(X,D)={\rm Op_{L^*}}(a)$  the corresponding ${\rm
L^{\ast}}$-pseudo-differential operator defined by
\begin{equation}\label{EQ: L-tor-pseudo-def_2}
  {\rm Op_{L^*}}(a)f(x)=a(X,D)f(x):=\sum_{\xi\in\ind} v_{\xi}(x)\ a(x,\xi)\widehat{f}_{\ast}(\xi).
\end{equation}
The set of operators ${\rm Op_{L^*}}(a)$ of the form (\ref{EQ:
L-tor-pseudo-def_2}) with $a\in
\widetilde{S}^m_{\rho,\delta}(\overline{\Omega}\times\ind)$
will be denoted by ${\rm Op_{L^*}}(\widetilde{S}^m_{\rho,\delta}
(\overline{\Omega}\times\ind))$, or by
$\widetilde{\Psi}^m_{\rho,\delta} (\overline{\Omega}\times\ind)$.

\begin{rem}\label{REM: Topology of SymClass}
{\rm (Topology on $S^{m}_{\rho,
\delta}(\overline{\Omega}\times\ind)$ ($\widetilde{S}^{m}_{\rho,
\delta}(\overline{\Omega}\times\ind)$)).} The set $S^{m}_{\rho,
\delta}(\overline{\Omega}\times\ind)$ ($\widetilde{S}^{m}_{\rho,
\delta}(\overline{\Omega}\times\ind)$) of symbols has a natural
topology. Let us consider the functions $p_{\alpha\beta}^{l}:
S^{m}_{\rho,
\delta}(\overline{\Omega}\times\ind)\rightarrow\mathbb R$
($\widetilde{p}_{\alpha\beta}^{l}: \widetilde{S}^{m}_{\rho,
\delta}(\overline{\Omega}\times\ind)\rightarrow\mathbb R$) defined
by
$$
p_{\alpha\beta}^{l}(\sigma):={\rm
sup}\left[\frac{\left|\Delta_{(x)}^{\alpha}D^{(\beta)}\sigma(x,
\xi)\right|}{\langle\xi\rangle^{l-\rho|\alpha|+\delta|\beta|}}:\,\,
(x, \xi)\in\overline{\Omega}\times\ind\right]
$$
$$
\left(\widetilde{p}_{\alpha\beta}^{l}(\sigma):={\rm
sup}\left[\frac{\left|\widetilde{\Delta}_{(x)}^{\alpha}\widetilde{D}^{(\beta)}\sigma(x,
\xi)\right|}{\langle\xi\rangle^{l-\rho|\alpha|+\delta|\beta|}}:\,\,
(x, \xi)\in\overline{\Omega}\times\ind\right]\right).
$$
Now $\{p_{\alpha\beta}^{l}\}$
($\{\widetilde{p}_{\alpha\beta}^{l}\}$) is a countable family of seminorms,
and they define a
Fr\'echet topology on $S^{m}_{\rho,
\delta}(\overline{\Omega}\times\ind)$
($\widetilde{S}^{m}_{\rho, \delta}(\overline{\Omega}\times\mathbb
Z)$). Due to the bijective correspondence of ${\rm
Op_L}(S^{m}_{\rho, \delta}(\overline{\Omega}\times\ind))$ and
$S^{m}_{\rho, \delta}(\overline{\Omega}\times\ind)$ (${\rm
Op_{L^*}}(\widetilde{S}^{m}_{\rho,
\delta}(\overline{\Omega}\times\ind))$ and
$\widetilde{S}^{m}_{\rho, \delta}(\overline{\Omega}\times\mathbb
Z)$), this directly topologises also the set of operators. These spaces
are not normable, and the topologies have but a marginal role.
\end{rem}

The notion of a symbol can be naturally  extended to that of an amplitude.

\begin{defn}[${\rm L}$-amplitudes]\label{DEF: Amplitude}
The class $\mathcal A^m_{\rho,\delta}(\overline{\Omega})$ of
${\rm L}$-amplitudes consists of the functions
$a(x,y,\xi)$ which are smooth in $x$ and $y$ for all
$\xi\in\ind$ and which satisfy
\begin{equation}
  \left|\Delta_{(x)}^\alpha \Delta_{(y)}^{\alpha'} D^{(\beta)}_{x} D^{(\gamma)}_{y}
        a(x,y,\xi) \right|
        \leq C_{a\alpha\alpha'\beta\gamma m}
   \ \langle\xi\rangle^{m-\rho(|\alpha|+|\alpha'|)+\delta(|\beta|+|\gamma|)}
\end{equation}
for all $x,y\in\overline{\Omega}$, for all $\alpha,\alpha',
\beta,\gamma\geq 0$, and for all $\xi\in\ind$. Such a
function $a$ will be also called an ${\rm L}$-amplitude
of order $m\in\mathbb R$ of type $(\rho,\delta)$. Formally we may
also define
$$
  ({\rm Op_L}(a)f)(x) := \sum_{\xi\in\ind} \int_{\Omega}
    u_{\xi}(x)\ \overline{v_{\xi}(y)}\ a(x,y,\xi)\ f(y)\ dy
$$
for $f\in C_{{\rm L}}^\infty(\overline{\Omega})$. Sometimes we
may denote ${\rm Op_L}(a)$ by $a(X,Y,D).$
We also write $\mathcal
A^{m}(\overline{\Omega}):=\mathcal A^{m}_{1,
0}(\overline{\Omega})$ as well as
$$
\mathcal
A^{-\infty}(\overline{\Omega}):=\bigcap\limits_{m\in\mathbb
R}\mathcal A^{m}(\overline{\Omega}) \,\,\,\, \hbox{and} \,\,\,\,
\mathcal
A^{\infty}_{\rho,\delta}(\overline{\Omega}):=\bigcup\limits_{m\in\mathbb
R}\mathcal A^{m}_{\rho,\delta}(\overline{\Omega}).
$$
\end{defn}

Clearly we can regard the ${\rm L}$-symbols as a special class of
${\rm L}$-amplitudes, namely the ones
independent of the middle argument.
Analogously, the class $\widetilde{\mathcal
A}^m_{\rho,\delta}(\overline{\Omega})$ of ${\rm
L^{\ast}}$-amplitudes consists of the functions
$a(x,y,\xi)$ which are smooth in $x$ and $y$ for all
$\xi\in\ind$ and which satisfy
\begin{equation}
  \left|\widetilde{\Delta}_{(x)}^\alpha \widetilde{\Delta}_{(y)}^{\alpha'} \widetilde{D}^{(\beta)}_{x} \widetilde{D}^{(\gamma)}_{y}
        a(x,y,\xi) \right|
        \leq C_{a\alpha\beta\gamma m}
   \ \langle\xi\rangle^{m-\rho(|\alpha|+|\alpha'|)+\delta(|\beta|+|\gamma|)}
\end{equation}
for all $x,y\in\overline{\Omega}$, for all $\alpha, \alpha',
\beta,\gamma\geq 0$, and for all $\xi\in\ind$.
Formally we may also write
$$
  ({\rm Op_{L^*}}(a)f)(x) := \sum_{\xi\in\ind} \int_{\Omega}
    v_{\xi}(x)\ \overline{u_{\xi}(y)}\ a(x,y,\xi)\ f(y)\ dy
$$
for $f\in C_{{\rm L^{\ast}}}^\infty(\overline{\Omega})$.
We also write
$\widetilde{\mathcal
A}^{m}(\overline{\Omega}):=\widetilde{\mathcal A}^{m}_{1,
0}(\overline{\Omega})$ as well as
$
\widetilde{\mathcal
A}^{-\infty}(\overline{\Omega}):=\bigcap\limits_{m\in\mathbb
R}\widetilde{\mathcal A}^{m}(\overline{\Omega})$
and $\widetilde{\mathcal
A}^{\infty}_{\rho,\delta}(\overline{\Omega}):=\bigcup\limits_{m\in\mathbb
R}\widetilde{\mathcal A}^{m}_{\rho,\delta}(\overline{\Omega}).
$

\begin{defn}[Equivalence of amplitudes] \label{DEF: EquivAmplit}
We say that amplitudes $a, a'$ are $m(\rho,
\delta)$-equivalent $(m\in\mathbb R)$,
$a\stackrel{m,\rho,\delta}{\sim} a'$, if $a-a'\in\mathcal
A^{m}_{\rho,\delta}(\overline{\Omega})$; they are asymptotically
equivalent, $a\sim a'$ (or $a\stackrel{-\infty}{\sim} a'$ if we
need additional clarity), if $a-a'\in\mathcal
A^{-\infty}(\overline{\Omega})$. For the corresponding operators we also write
${\rm Op}(a)\stackrel{m,\rho,\delta}{\sim} {\rm Op}(a')$ and ${\rm
Op}(a)\sim {\rm Op}(a')$ (or ${\rm Op}(a)\stackrel{-\infty}{\sim}
{\rm Op}(a')$ if we need additional clarity), respectively. It is
obvious that $\stackrel{m,\rho,\delta}{\sim}$ and $\sim$ are
equivalence relations.
\end{defn}

From the algebraic point of view, we could handle the amplitudes,
symbols, and operators modulo the equivalence relation $\sim$,
because the ${\rm L}$-pseudo-differential operators form a
$\ast$-algebra with ${\rm
Op}(S^{-\infty}(\overline{\Omega}\times\ind))$ as a
subalgebra.

\vspace{3mm}

The next theorem is a prelude to asymptotic expansions, which are
the main tool in the symbolic analysis of ${\rm
L}$-pseudo-differential operators.

\begin{theorem}[Asymptotic sums of symbols] Let $(m_{j})_{j=0}^{\infty}\subset\mathbb
R$ be a sequence such that $m_{j}>m_{j+1}$, and
$m_{j}\rightarrow-\infty$ as $j\rightarrow\infty$, and
$\sigma_{j}\in
S^{m_{j}}_{\rho,\delta}(\overline{\Omega}\times\ind)$ for all
$j\in\ind$. Then there exists an ${\rm L}$-symbol $\sigma\in
S^{m_{0}}_{\rho,\delta}(\overline{\Omega}\times\ind)$ such that
for all $N\in\ind$,
$$
\sigma\stackrel{m_{N},\rho,\delta}{\sim}\sum_{j=0}^{N-1}\sigma_{j}.
$$
\end{theorem}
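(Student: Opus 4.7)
The plan is to construct $\sigma$ by a Borel-type summation. Choose a rapidly increasing sequence of cutoff radii $R_0 < R_1 < R_2 < \cdots \to \infty$ to be fixed later, and let $\chi_j : \ind \to [0,1]$ depend only on $\langle\xi\rangle$, vanishing for $\langle\xi\rangle < R_j$ and equal to $1$ for $\langle\xi\rangle \geq 2R_j$. Define
\[
\sigma(x,\xi) := \sum_{j=0}^{\infty} \chi_j(\xi)\, \sigma_j(x,\xi).
\]
Since $R_j\to\infty$, for each $\xi\in\ind$ only finitely many terms are non-zero, so $\sigma(\cdot,\xi)\in C^\infty(\overline{\Omega})$ for every $\xi$.

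The first step is a Leibniz-type identity for $\Delta_{(x)}^\alpha$ applied to a product $\chi(\xi)a(x,\xi)$, derived from the convolution formulation in Definition \ref{DEF: DifferenceOper} and formula \eqref{EQ:diff-symb-2}, in the spirit of the analogous Lie-group calculus of \cite{Ruzhansky-Turunen-Wirth:JFAA}. One obtains an expansion of $\Delta^\alpha_{(x)}(\chi\, a)$ as a finite sum $\sum_{\alpha'+\alpha''=\alpha} c_{\alpha',\alpha''}(\Delta^{\alpha'}\chi)(\xi)(\Delta^{\alpha''}_{(x)} a)(x,\xi)$, modulo lower-order error terms that are controlled by the strong admissibility of the $q_j$'s. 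For $\chi=\chi_j$ depending only on $\langle\xi\rangle$, the differences $\Delta^{\alpha'}\chi_j$ are supported in $\{\langle\xi\rangle\sim R_j\}$ and bounded by $C_{\alpha'}\langle\xi\rangle^{-|\alpha'|}$, so truncation by $\chi_j$ does not increase the symbol order and
\[
\bigl|\Delta^\alpha_{(x)} D^{(\beta)}(\chi_j \sigma_j)(x,\xi)\bigr| \leq C_{j,\alpha,\beta}\, \mathbf{1}_{\{\langle\xi\rangle\geq R_j\}}(\xi)\, \langle\xi\rangle^{m_j-\rho|\alpha|+\delta|\beta|}.
\]

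Next I would choose $R_j$ inductively so large that, for every $|\alpha|+|\beta|\leq j$ and every $k\leq j-1$,
\[
\bigl|\Delta^\alpha_{(x)} D^{(\beta)}(\chi_j \sigma_j)(x,\xi)\bigr| \leq 2^{-j}\, \langle\xi\rangle^{m_k-\rho|\alpha|+\delta|\beta|}
\]
holds uniformly in $(x,\xi)$. This is possible since on $\{\langle\xi\rangle\geq R_j\}$ we may trade $\langle\xi\rangle^{m_j-m_k}\leq R_j^{m_j-m_k}$, and the inequality $m_j<m_k$ forces the constants to be arbitrarily small for large $R_j$. With such $R_j$ the series defining $\sigma$ is absolutely convergent in every seminorm of Remark \ref{REM: Topology of SymClass}, giving $\sigma\in S^{m_0}_{\rho,\delta}(\overline{\Omega}\times\ind)$.

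For the asymptotic relation, split
\[
\sigma - \sum_{j=0}^{N-1}\sigma_j \;=\; \chi_N\sigma_N \;+\; \sum_{j\geq N+1}\chi_j\sigma_j \;+\; \sum_{j=0}^{N-1}(\chi_j-1)\sigma_j.
\]
The first term lies in $S^{m_N}_{\rho,\delta}$ since truncation preserves the order; the middle tail lies in $S^{m_N}_{\rho,\delta}$ by the estimate above applied with $k=N$ (noting $N\leq j-1$ there); and the last finite sum is compactly supported in $\xi$, hence in $S^{-\infty}$. The principal obstacle is the Leibniz expansion for $\Delta^\alpha_{(x)}$: unlike the toroidal or Lie-group setting, in this non-invariant biorthogonal calculus the differences couple distinct $\eta\in\ind$ through $q^\alpha(x,\cdot)$ and through the pairing of $u_\eta$ with $v_\xi$, so some care is required to expand $\Delta^\alpha_{(x)}(\chi_j\sigma_j)$ and to majorise the cross terms. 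Once this identity is in hand, the rest is a routine Borel summation.
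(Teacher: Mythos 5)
The overall strategy (Borel--Mittag--Leffler truncation with cutoffs $\chi_j$ depending on $\langle\xi\rangle$, followed by inductive tuning of the radii) is the standard one and is what the paper does as well. However, your argument has a genuine gap at its crux: you never prove the Leibniz-type identity
\[
\Delta^\alpha_{(x)}(\chi_j\,\sigma_j)\;\approx\;\sum_{\alpha'+\alpha''=\alpha} c_{\alpha',\alpha''}\,(\Delta^{\alpha'}\chi_j)(\xi)\,(\Delta^{\alpha''}_{(x)}\sigma_j)(x,\xi),
\]
and everything downstream (the claim that truncation does not raise the symbol order, the $2^{-j}$ smallness estimate used to choose $R_j$, the statement that the tail $\sum_{j\geq N+1}\chi_j\sigma_j$ lies in $S^{m_N}_{\rho,\delta}$) depends on it. As you yourself note, in this calculus the difference operator $\Delta^\alpha_{(x)}$ acts by $\Delta^\alpha_{(x)}a(x,\xi)=u_\xi^{-1}(x)\sum_{\eta}\mathcal F_L\bigl(q^\alpha(x,\cdot)u_\xi(\cdot)\bigr)(\eta)\,a(x,\eta)\,u_\eta(x)$, which is nonlocal in $\xi$ and couples all $\eta\in\ind$; there is no obvious ``Leibniz rule'' decomposing it against a product $\chi_j(\xi)a(x,\xi)$ into principal and lower-order parts, and nothing in the paper's calculus (Definition \ref{DEF: DifferenceOper}, \eqref{EQ:diff-symb-2}) provides one. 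Flagging it as ``the principal obstacle'' and declaring the rest routine is not a proof.

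The paper avoids the Leibniz issue entirely. It picks $\chi_j(\xi)=\chi(\varepsilon_j\xi)$ with $\chi\equiv1$ near infinity, so that $\chi_j\sigma_j-\sigma_j=(\chi_j-1)\sigma_j$ is supported on finitely many $\xi$. A symbol compactly supported in $\xi$ has a convolution kernel given by a finite sum $\sum_\xi b(x,\xi)u_\xi(y)$, which lies in $C^\infty_{\rm L}(\overline\Omega\times\overline\Omega)$; since $\Delta^\alpha$ corresponds to multiplying the kernel by $q^\alpha$ (see \eqref{EQ:diff-symb}), such a symbol belongs to $S^{-\infty}$, hence $\chi_j\sigma_j\in S^{m_j}_{\rho,\delta}$ without any product rule for difference operators. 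From there the paper runs the standard Borel argument (though, to be fair, the paper's own write-up also does not spell out the inductive choice of $\varepsilon_j$ needed for the seminorm estimates, a step you correctly include). If you replace the unproven Leibniz identity with the observation that compactly supported symbols are smoothing, your proof closes along the paper's lines.
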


\begin{proof}
The proof is rather standard.
Choose a function $\chi\in C^{\infty}(\mathbb
R)$ satisfying $|\xi|\geq1\Rightarrow\chi(\xi)=1$ and
$|\xi|\leq\frac{1}{2}\Rightarrow\chi(\xi)=0$; otherwise
$0\leq\chi(\xi)\leq1$. Take a sequence
$(\varepsilon_{j})_{j=0}^{\infty}$ of positive real numbers such
that $\varepsilon_{j}>\varepsilon_{j+1}$, and
$\varepsilon_{j}\rightarrow0$ as $j\rightarrow\infty$, and define
$\chi_{j}\in C^{\infty}(\mathbb R)$ by
$\chi_{j}(\xi):=\chi(\varepsilon_{j}\xi).$ Since $\chi_{j}(\xi)=1$
for sufficiently large $\xi$, we get $\chi_{j}\sigma_{j}\in
S^{m_{j}}_{\rho,\delta}(\overline{\Omega}\times\ind)$ for each
$j$. For any fixed $\xi\in\ind$ the function
$\chi_{j}(\xi)\sigma_{j}(x,\xi)$ vanishes, when $j$ is large
enough. This justifies the definition
$$
\sigma(x,\xi):=\sum_{j=0}^{\infty}\chi_{j}(\xi)\sigma_{j}(x,\xi),
$$
and clearly $\sigma\in
S^{m_{0}}_{\rho,\delta}(\overline{\Omega}\times\ind)$.
Furthermore,
$$
\sigma(x,\xi)-\sum_{j=0}^{N-1}\sigma_{j}(x,\xi)=
\sum_{j=0}^{N-1}[\chi_{j}(\xi)-1]\sigma_{j}(x,\xi)+\sum_{j=N}^{\infty}\chi_{j}(\xi)\sigma_{j}(x,\xi).
$$
Recall that $\varepsilon_{j}>\varepsilon_{j+1}$, and
$\varepsilon_{j}\rightarrow0$ as $j\rightarrow\infty$, so that the
$\sum_{j=0}^{N-1}$ part of the sum vanishes, whenever $\xi$ is
large. This shows that
$$\sigma(x,\xi)-\sum_{j=0}^{N-1}\chi_{j}(\xi)\sigma_{j}(x,\xi)\in
S^{m_{N}}_{\rho,\delta}(\overline{\Omega}\times\ind)$$ finishing
the proof.
\end{proof}

We will now look at the formula for the symbol of the adjoint operator.
Let $A\in {\rm Op_L}
(S^m_{\rho,\delta}(\overline{\Omega}\times\ind))$. By the
definition of the adjoint operator we have
$$
(Au_{\xi}, v_{\eta})_{L^2}=(u_{\xi}, A^{*}v_{\eta})_{L^2}
$$
or
$$
\int_{\Omega}Au_{\xi}(x)\overline{v_{\eta}(x)}dx=\int_{\Omega}u_{\xi}(x)\overline{A^{\ast}v_{\eta}(x)}dx
$$
for $\xi, \eta\in\ind$.
Plugging in the integral expressions, we get
\begin{align*}
\int_{\Omega}{\left[\int_{\Omega}K_{A}(x,y)u_{\xi}(y)dy\right]}\overline{v_{\eta}(x)}dx & =
\int_{\Omega}{u_{\xi}(x)}\overline{\left[\int_{\Omega}K_{A^{\ast}}(x,y)v_{\eta}(y)dy\right]}dx \\
& = \int_{\Omega}{u_{\xi}(y)}{\left[\int_{\Omega}\overline{K_{A^{\ast}}(y,x)}
\overline{v_{\eta}(x)}dx\right]}dy
\end{align*}
for $\xi, \eta\in\ind$, where we swapped $x$ and $y$ in the last formula.
Consequently, we get the familiar property
$$
K_{A^{\ast}}(x,y)=\overline{K_{A}(y,x)}.
$$
Now, using this and formula (ii) in Corollary \ref{QuanOper_star},
and then formula (v) in Corollary \ref{COR: SymFor} and the Taylor expansion in Proposition \ref{TaylorExp},
we can write for the ${\rm L}^*$-symbol $\tau_{A^*}$ of $A^*$ that
\begin{align*}
v_\xi(x) \tau_{A^*}(x,\xi) & =
\int_\Omega K_{A^{\ast}}(x,y)v_\xi(y) dy \\
& =
\int_\Omega \overline{K_{A}(y,x)} v_\xi(y) dy \\
& =
\int_\Omega \sum_{\eta\in\ind}  \overline{u_\eta(y)  \sigma_A(y,\eta)} v_\eta(x)  v_\xi(y) dy \\
& \sim \int_\Omega \sum_{\eta\in\ind}  \overline{u_\eta(y)}
\sum_\alpha \frac{1}{\alpha!} \overline {D_x^{(\alpha)} \sigma_A(x,\eta)
q^\alpha(x,y)} v_\eta(x)  v_\xi(y) dy
\end{align*}
as an asymptotic sum. Formally regrouping terms for each $\alpha$, we obtain
$$
\tau_{A^*}(x,\xi) \sim v_\xi(x)^{-1}
\sum_\alpha \frac{1}{\alpha!}  \sum_{\eta\in\ind}  v_\eta(x) \overline {D_x^{(\alpha)} \sigma_A(x,\eta)}
 \int_\Omega \overline{q^\alpha(x,y)}  \overline{u_\eta(y)} v_\xi(y) dy.
$$
Using the ${\rm L^*}$-version of the  difference formula \eqref{EQ:diff-symb-3},
taking $$\widetilde{q}(x,y):=\overline{q(x,y)}$$
we can write this as
$$
\tau_{A^*}(x,\xi) \sim \sum_\alpha \frac{1}{\alpha!}
\widetilde \Delta_x^\alpha D_x^{(\alpha)}\overline{\sigma_A(x,\xi)}.
$$
Making rigorous estimates for the remainder in a routine way,
and assuming in the following theorem that for every $x\in\Omega$,
the multiplication by $q_{j}(x,\cdot)$ preserves both spaces
$C_{{\rm L}}^\infty(\overline{\Omega})$ and $C_{{\rm L}^{*}}^\infty(\overline{\Omega})$,
we have proved:

\begin{theorem}[Adjoint operators]
Let $0\leq\delta<\rho\leq 1$. Let $A\in {\rm Op_L}
(S^m_{\rho,\delta}(\overline{\Omega}\times\ind))$.
Assume that the conjugate symbol
class $\widetilde{S}^{m}_{\rho,\delta}(\overline{\Omega}\times\ind)$
is defined with strongly admissible
functions $\widetilde{q}_{j}(x,y):=\overline{q_{j}(x,y)}$ which are ${\rm L}^{*}$-strongly admissible.
Then the adjoint of $A$ satisfies
$A^{\ast}\in {\rm Op_{L^*}}(\widetilde{S}^{m}_{\rho,\delta}(\overline{\Omega}\times\ind))$,
with its ${\rm L}^*$-symbol
$\tau_{A^*}\in \widetilde{S}^{m}_{\rho,\delta}(\overline{\Omega}\times\ind)$
having the asymptotic expansion
$$
\tau_{A^*}(x,\xi) \sim \sum_\alpha \frac{1}{\alpha!}
\widetilde \Delta_x^\alpha D_x^{(\alpha)}\overline{\sigma_A(x,\xi)}.
$$
\end{theorem}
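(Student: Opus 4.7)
The bulk of the formal computation is already carried out in the passage preceding the statement; my task is to make the asymptotic expansion rigorous. The plan is to produce, for every $N \in \mathbb{N}$, a finite version of the expansion
\begin{equation*}
\tau_{A^{*}}(x,\xi) = \sum_{|\alpha| < N} \frac{1}{\alpha!}\, \widetilde{\Delta}_{x}^{\alpha} D_{x}^{(\alpha)} \overline{\sigma_{A}(x,\xi)} + R_{N}(x,\xi),
\end{equation*}
and then verify that $R_{N} \in \widetilde{S}^{m - (\rho - \delta)N}_{\rho,\delta}(\overline{\Omega} \times \ind)$, so that the desired asymptotic expansion follows on applying the Asymptotic Sums theorem to the sequence $\sigma_{j} := \frac{1}{\alpha_{j}!}\, \widetilde{\Delta}_{x}^{\alpha_{j}} D_{x}^{(\alpha_{j})} \overline{\sigma_{A}(x,\xi)}$ (of decreasing orders $m - (\rho-\delta)|\alpha_{j}|$, which is where the hypothesis $\rho > \delta$ is essential).

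The first step is the kernel identity $K_{A^{*}}(x,y) = \overline{K_{A}(y,x)}$, which follows by testing against $u_{\xi}(y) \overline{v_{\eta}(x)}$ as in the excerpt. Substituting the kernel expansion (v) of Corollary \ref{COR: SymFor} into formula (ii) of Corollary \ref{QuanOper_star} gives
\begin{equation*}
v_{\xi}(x)\, \tau_{A^{*}}(x,\xi) = \int_{\Omega} \sum_{\eta \in \ind} \overline{u_{\eta}(y)\, \sigma_{A}(y,\eta)}\, v_{\eta}(x)\, v_{\xi}(y)\, dy,
\end{equation*}
understood distributionally. Next I would apply Proposition \ref{TaylorExp} to expand $y \mapsto \overline{\sigma_{A}(y,\eta)}$ about the basepoint $e = x$ to order $N$ with a $q^{\alpha}$-Taylor remainder $g_{N}$. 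This produces precisely the terms $\overline{D_{x}^{(\alpha)} \sigma_{A}(x,\eta)}\, \overline{q^{\alpha}(x,y)}$ appearing in the formal derivation; since $\widetilde{q}_{j} = \overline{q_{j}}$, the integrals $\int_{\Omega} \overline{q^{\alpha}(x,y)}\, \overline{u_{\eta}(y)}\, v_{\xi}(y)\, dy$ are exactly the coefficients in the L$^{*}$-difference formula \eqref{EQ:diff-symb-3}. Resumming in $\eta$ and dividing by $v_{\xi}(x)$ yields the claimed leading terms.

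The main obstacle, and the step requiring real care, is controlling the remainder $R_{N}$ coming from the Taylor remainder $g_{N}(y)$. The idea is to recognise that, for fixed $x$ and $\alpha$ with $|\alpha| = N$, the map
\begin{equation*}
\eta \mapsto \int_{\Omega} \overline{q^{\alpha}(x,y)}\, g_{N}(y,\eta)\, \overline{u_{\eta}(y)}\, v_{\xi}(y)\, dy
\end{equation*}
is essentially an L$^{*}$-difference of order $N$ applied to the symbol of an auxiliary amplitude whose $y$-dependence absorbs $g_{N}$. Using the defining estimate \eqref{EQ:symbol-class} for $\sigma_{A}$ in the $y$-variable and iterating the L$^{*}$-difference formula $N$ times brings in a decay factor $\langle\xi\rangle^{-\rho N}$, while the $N$ derivatives contributed by $g_{N}$ cost at most $\langle\xi\rangle^{\delta N}$. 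This gives $|R_{N}(x,\xi)| \lesssim \langle\xi\rangle^{m - (\rho - \delta)N}$; the analogous estimates for $\widetilde{\Delta}^{\beta}_{(x)} \widetilde{D}^{(\gamma)}_{x} R_{N}$ follow by differentiating under the integral and under the sum and repeating the same argument (this is where one uses that multiplication by $q_{j}(x,\cdot)$ preserves both $C^{\infty}_{L}(\overline{\Omega})$ and $C^{\infty}_{L^{*}}(\overline{\Omega})$, so that the composed difference and derivative operators act admissibly). Once these remainder estimates are in place, the Asymptotic Sums theorem delivers a symbol in $\widetilde{S}^{m}_{\rho,\delta}(\overline{\Omega} \times \ind)$ asymptotic to the stated series, and uniqueness modulo $\widetilde{S}^{-\infty}$ identifies it with $\tau_{A^{*}}$.
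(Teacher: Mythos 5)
Your proposal follows essentially the same route as the paper: the kernel identity $K_{A^{*}}(x,y)=\overline{K_{A}(y,x)}$, substitution of the symbol expansion of the kernel into the ${\rm L}^{*}$-quantization formula, Taylor expansion in $y$ about the basepoint $x$ via Proposition \ref{TaylorExp}, and identification of the resulting integrals with the ${\rm L}^{*}$-difference operators through \eqref{EQ:diff-symb-3} with $\widetilde{q}_{j}=\overline{q_{j}}$. The paper itself dismisses the remainder estimates as routine, so your sketch of how to bound $R_{N}$ and invoke the asymptotic-sums theorem is, if anything, more explicit than the published argument.
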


We now treat symbols of the amplitude operators.

\begin{theorem}[Amplitude symbols]
Let $0\leq\delta<\rho\leq 1$. For every amplitude $a\in \mathcal
A^m_{\rho,\delta}(\overline{\Omega})$ there exists a unique {\rm L}-symbol
$\sigma\in S^m_{\rho,\delta}(\overline{\Omega}\times\ind)$
satisfying ${\rm Op_L}(a)={\rm Op_L}(\sigma)$, where
\begin{equation}
  \sigma(x,\xi) \sim \sum_{\alpha\geq 0 } \frac{1}{\alpha!}
        \ \Delta_{(x)}^{\alpha}
        \ D_y^{(\alpha)} a(x,y,\xi)|_{y=x}.
\end{equation}
\end{theorem}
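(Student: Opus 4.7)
The plan is to start from the symbol reconstruction formula $\sigma(x,\xi)=u_\xi(x)^{-1}({\rm Op_L}(a)u_\xi)(x)$ furnished by Corollary \ref{COR: SymFor}(ii), which simultaneously delivers uniqueness (since two amplitude operators agreeing on all $f\in C^\infty_{{\rm L}}(\overline\Omega)$ must agree on $u_\xi$ and therefore produce the same $\sigma$ modulo $S^{-\infty}$) and the concrete expression to be analysed. Applying the amplitude quantization from Definition \ref{DEF: Amplitude} to $u_\xi$, we get
$$
\sigma(x,\xi)=u_\xi(x)^{-1}\sum_{\eta\in\ind}\int_{\Omega}u_\eta(x)\,\overline{v_\eta(y)}\,a(x,y,\eta)\,u_\xi(y)\,dy.
$$

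Next I would Taylor-expand the amplitude $a$ in its middle variable around $y=x$ using Proposition \ref{TaylorExp}:
$$
a(x,y,\eta)=\sum_{|\alpha|<N}\frac{1}{\alpha!}\bigl(D_y^{(\alpha)}a(x,y,\eta)\big|_{y=x}\bigr)\,q^\alpha(x,y)+r_N(x,y,\eta),
$$
where $r_N$ carries a factor $q^\alpha$ with $|\alpha|=N$ multiplied by a smooth amplitude of the same class. Substituting and interchanging the finite $\alpha$-sum with the $\eta$-sum and $y$-integral, the $\alpha$-th principal term becomes
$$
\frac{1}{\alpha!}u_\xi(x)^{-1}\sum_{\eta\in\ind}u_\eta(x)\,b_\alpha(x,\eta)\int_\Omega q^\alpha(x,y)\overline{v_\eta(y)}u_\xi(y)\,dy,
$$
with $b_\alpha(x,\eta):=D_y^{(\alpha)}a(x,y,\eta)|_{y=x}$. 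Comparing with \eqref{EQ:diff-symb-2} shows that this equals exactly $\frac{1}{\alpha!}\Delta_{(x)}^{\alpha}b_\alpha(x,\xi)=\frac{1}{\alpha!}\Delta_{(x)}^{\alpha}D_y^{(\alpha)}a(x,y,\xi)|_{y=x}$, producing the stated expansion.

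The main obstacle is the rigorous control of the remainder, i.e. showing that the contribution coming from $r_N$ lies in $S^{m-N(\rho-\delta)}_{\rho,\delta}(\overline\Omega\times\ind)$, so that the expression $\sigma-\sum_{|\alpha|<N}\frac{1}{\alpha!}\Delta_{(x)}^{\alpha}D_y^{(\alpha)}a(x,y,\xi)|_{y=x}$ indeed has order $m-N(\rho-\delta)$, which tends to $-\infty$ only because we assume $\rho>\delta$. The plan here is to view the remainder contribution as an amplitude operator whose amplitude is $q^\alpha(x,y)a_{N,\alpha}(x,y,\eta)$ with $|\alpha|=N$: the $N$ factors of $q(x,y)$ will morally produce $N$ difference operators in the symbolic picture, each gaining a factor $\langle\xi\rangle^{-\rho}$ by Definition \ref{DEF: Amplitude}, while the $x$-derivatives required to estimate $\Delta_{(x)}^\beta D_x^{(\gamma)}$ of the remainder cost at most $\langle\xi\rangle^{\delta|\beta|+\delta|\gamma|}$ through the amplitude hypothesis. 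Leibniz-type rules for the difference operators (using $q^\beta q^\alpha=q^{\beta+\alpha}$), the differentiation formulas in Proposition \ref{TaylorExp}, together with Cauchy--Schwarz and the Plancherel/Hausdorff--Young control of the $\eta$-sum from Propositions \ref{PlanchId} and Theorem \ref{TH: HY}, yield the required seminorm estimates. Finally, one applies the asymptotic summation theorem proved earlier to assemble a single $\sigma\in S^{m}_{\rho,\delta}(\overline\Omega\times\ind)$ with the prescribed asymptotic expansion, and the already-noted uniqueness identifies it with the symbol of ${\rm Op_L}(a)$.
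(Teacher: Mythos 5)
Your proof is correct and takes essentially the same route as the paper: start from the reconstruction formula $\sigma(x,\xi)=u_\xi(x)^{-1}({\rm Op_L}(a)u_\xi)(x)$, Taylor-expand in the middle $y$-variable about the diagonal, identify the $\alpha$-th term as $\frac{1}{\alpha!}\Delta_{(x)}^{\alpha}D_y^{(\alpha)}a(x,y,\xi)|_{y=x}$ (you via \eqref{EQ:diff-symb-2}, the paper by writing the $\Delta_{(x)}^\alpha$ definition out through $F(x,y,z)$), and defer the remainder estimates, which the paper itself labels ``routine''. The only difference is cosmetic: the paper first passes to the L-convolution kernel $k_a(x,y,z)=(\mathcal F^{-1}_{{\rm L}}a(x,y,\cdot))(z)$ and Taylor-expands $k_a(x,\cdot,z)$, whereas you expand the amplitude directly; since $\mathcal F_{{\rm L}}$ in the third variable is linear the two computations coincide. (One minor slip: in your uniqueness parenthetical the symbol is determined \emph{exactly} by the operator through the reconstruction formula, not merely modulo $S^{-\infty}$.)
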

\begin{proof} As a linear operator on $C_{{\rm
L}}^{\infty}(\overline{\Omega})$, the operator ${\rm Op_L}(a)$ possesses the
unique L-symbol $\sigma=\sigma_{{\rm Op_L}(a)}$, but at the moment we
do not yet know whether $\sigma\in
S^m_{\rho,\delta}(\overline{\Omega}\times\ind)$. By Theorem
\ref{QuanOper}  the L-symbol is computed from
$$
\sigma(x,\xi)=u_{\xi}^{-1}(x)({\rm Op_L}(a)u_{\xi})(x)
=u_{\xi}^{-1}(x)\sum_{\eta\in\ind} \int_{\Omega}
    u_{\eta}(x)\ \overline{v_{\eta}(y)}\ a(x,y,\eta)\ u_{\xi}(y) dy.
$$
Let us denote
$$
k_{a}(x,y,z):= (\mathcal F^{-1}_{{\rm L}} a(x,y,\cdot))(z)
$$
i.e., we get
$$
a(x,y,\xi)=(\mathcal F_{{\rm L}} k_{a}(x,y,\cdot))(\xi).
$$
Then we have
\begin{align*}
\sigma(x,\xi)&=
u_{\xi}^{-1}(x)\sum_{\eta\in\ind} \int_{\Omega}
\int_{\Omega}    u_{\eta}(x)\ \overline{v_{\eta}(y)}\
\overline{v_{\eta}(z)} k_{a}(x,y,z)\ u_{\xi}(y)\ dy \ dz
\\
&=u_{\xi}^{-1}(x)\int_{\Omega} \int_{\Omega}
F(x,y,z) k_{a}(x,y,z)\ u_{\xi}(y)\ dy \ dz.
\end{align*}
Now we approximate
the function $k_{a}(x,\cdot,z)\in C_{{\rm
L}}^{\infty}(\overline{\Omega})$ by Taylor polynomial type
expansions, by using Proposition \ref{TaylorExp}, we have
\begin{align*}
\sigma(x,\xi)&\sim\sum_{\alpha\geq 0}\frac{1}{\alpha!}\int_{\Omega}
\int_{\Omega} F(x,y,z)
q^{\alpha}(x,y)\\
&\,\,\,\,\,\,\,\,\,\,\,\,\,\,\,\,\,\,\,\,\,\,\,\,\,\,\,\,\times\big[
D^{(\alpha)}_{y}k_{a}(x,y,z)\big]_{y=x}\
u_{\xi}(y)u_{\xi}^{-1}(x)dzdy
\\
&\sim\sum_{\alpha\geq 0} \frac{1}{\alpha!}
        \ \Delta_{(x)}^\alpha
        \ D_y^{(\alpha)} a(x,y,\xi)|_{y=x}.
\end{align*}
Omitting a routine verification of the properties of the remainder,
this yields the statement.
\end{proof}

We now formulate the composition formula.

\begin{theorem}\label{Composition}
Let $m_{1}, m_{2}\in\mathbb R$ and $\rho>\delta\geq0$. Let $A,
B:C_{{\rm L}}^{\infty}(\overline{\Omega})\rightarrow C_{{\rm
L}}^{\infty}(\overline{\Omega})$ be continuous and linear, and assume that
their {\rm L}-symbols satisfy
\begin{align*}
|\Delta_{(x)}^{\alpha}\sigma_{A}(x,\xi)|&\leq
C_{\alpha}\langle\xi\rangle^{m_{1}-\rho|\alpha|},\\
|D^{(\beta)}\sigma_{B}(x,\xi)|&\leq
C_{\beta}\langle\xi\rangle^{m_{2}+\delta|\beta|},
\end{align*}
for all $\alpha,\beta\geq 0$, uniformly in $x\in\overline{\Omega}$ and
$\xi\in\ind$.
Then
\begin{equation}
\sigma_{AB}(x,\xi)\sim\sum_{\alpha\geq 0}
\frac{1}{\alpha!}(\Delta_{(x)}^{\alpha}\sigma_{A}(x,\xi))D^{(\alpha)}\sigma_{B}(x,\xi),
\label{CompositionForm}
\end{equation}
where the asymptotic expansion means that for every $N\in\mathbb N$ we have
$$
|\sigma_{AB}(x,\xi)-\sum_{|\alpha|<N}\frac{1}{\alpha!}(\Delta_{(x)}^{\alpha}\sigma_{A}(x,\xi))D^{(\alpha)}\sigma_{B}(x,\xi)|\leq
C_{N}\langle\xi\rangle^{m_{1}+m_{2}-(\rho-\delta)N}.
$$
\end{theorem}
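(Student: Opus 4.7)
The plan is to compute $\sigma_{AB}(x,\xi)$ directly from Corollary \ref{COR: SymFor}(ii), namely $\sigma_{AB}(x,\xi)=u_\xi(x)^{-1}(ABu_\xi)(x)$, and then reduce the resulting expression via a single $q$-Taylor expansion. First, applying the quantization formula \eqref{Quantization} to $Bu_\xi$ and using $\widehat{u_\xi}(\eta)=\delta_{\xi\eta}$ from biorthogonality \eqref{BiorthProp}, the sum collapses to $(Bu_\xi)(y)=u_\xi(y)\sigma_B(y,\xi)$. Applying $A$ to this function and using \eqref{Quantization} once more, I get
\begin{equation*}
\sigma_{AB}(x,\xi)=u_\xi(x)^{-1}\sum_{\eta\in\ind}u_\eta(x)\sigma_A(x,\eta)\int_\Omega u_\xi(y)\sigma_B(y,\xi)\overline{v_\eta(y)}\,dy.
\end{equation*}

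Next, I would expand $y\mapsto\sigma_B(y,\xi)$ around $y=x$ to order $N$ using Proposition \ref{TaylorExp}. The polynomial part is $\sum_{|\alpha|<N}\frac{1}{\alpha!}D^{(\alpha)}\sigma_B(x,\xi)\,q^\alpha(x,y)$, and the remainder has the form $\sum_{|\alpha|=N}\frac{1}{\alpha!}q^\alpha(x,y)g_{N,\alpha}(x,y,\xi)$, where each $g_{N,\alpha}$ is smooth in $(x,y)$ and, being built from derivatives of $\sigma_B$ of order $\geq N$, inherits the uniform bound $|g_{N,\alpha}(x,y,\xi)|\lesssim\langle\xi\rangle^{m_2+\delta N}$. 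Substituting the polynomial part into the above formula, the factor $D^{(\alpha)}\sigma_B(x,\xi)$ pulls outside the $\eta$-sum, and the remaining expression matches exactly formula \eqref{EQ:diff-symb-2} for $\Delta_{(x)}^\alpha\sigma_A(x,\xi)$. Thus the $|\alpha|<N$ terms collapse to $\sum_{|\alpha|<N}\frac{1}{\alpha!}(\Delta_{(x)}^\alpha\sigma_A(x,\xi))D^{(\alpha)}\sigma_B(x,\xi)$, which is precisely the announced expansion truncated at order $N$.

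The technical heart of the argument, and the main obstacle, is to estimate the remainder
\begin{equation*}
R_N(x,\xi)=u_\xi(x)^{-1}\sum_{|\alpha|=N}\frac{1}{\alpha!}\sum_{\eta\in\ind}u_\eta(x)\sigma_A(x,\eta)\int_\Omega q^\alpha(x,y)g_{N,\alpha}(x,y,\xi)u_\xi(y)\overline{v_\eta(y)}\,dy.
\end{equation*}
The plan here is to interpret this as an amplitude-type quantity in which the amplitude is a product of $\sigma_A$ with $g_{N,\alpha}$ and whose $q^\alpha$-weighting is absorbed exactly as in \eqref{EQ:diff-symb-2}. Using the hypothesis $|\Delta_{(x)}^\alpha\sigma_A(x,\xi)|\lesssim\langle\xi\rangle^{m_1-\rho N}$ for the $\eta$-sum and the uniform bound $|g_{N,\alpha}|\lesssim\langle\xi\rangle^{m_2+\delta N}$ for the frozen remainder factor, one arrives at $|R_N(x,\xi)|\lesssim\langle\xi\rangle^{m_1+m_2-(\rho-\delta)N}$. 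The condition $\rho>\delta$ then forces this exponent to tend to $-\infty$ as $N\to\infty$, which is exactly the asymptotic meaning of \eqref{CompositionForm}. Making this bound rigorous is the only nonroutine step: it requires treating the $y$-dependence of $g_{N,\alpha}$ as an amplitude and invoking the stability of the symbol seminorms under the $(\Delta_{(x)},D^{(\beta)})$-hierarchy, in the same spirit as the amplitude-to-symbol reduction established just above this theorem. Applying $\Delta_{(x)}^{\alpha'}D^{(\beta)}$ to both sides and repeating the entire scheme then yields the full set of symbol seminorm estimates and completes the proof.
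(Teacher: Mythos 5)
Your proposal is correct and arrives at the paper's expansion, but by a cleaner route: you bypass the convolution-kernel and $F(x,y,z)$ machinery entirely. Both you and the paper start from $\sigma_{AB}(x,\xi)=u_\xi^{-1}(x)(A(Bu_\xi))(x)$, but the paper then unpacks $A$ and $B$ through their Schwartz/convolution kernels $k_A,k_B$ and Taylor-expands $k_B(\cdot,t)$ in its first argument, while you use \eqref{FormSymb} to write $(Bu_\xi)(y)=u_\xi(y)\sigma_B(y,\xi)$ directly, apply the quantization formula for $A$, and Taylor-expand $\sigma_B(\cdot,\xi)$ in the first variable. Since $\sigma_B(y,\xi)=\widehat{k_B}(y,\xi)$, this is the same expansion in disguise, but your version avoids introducing $k_A,k_B$ and the three-point distribution $F$, and the collapse of the polynomial terms to $\sum_{|\alpha|<N}\frac{1}{\alpha!}\,\Delta_{(x)}^\alpha\sigma_A\cdot D^{(\alpha)}\sigma_B$ is immediate once you invoke \eqref{EQ:diff-symb-2}. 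On the remainder you are at essentially the same level of completeness as the paper, which writes ``omitting a routine treatment of the remainder'': you correctly identify that $g_{N,\alpha}$ still depends on $y$, so the $\eta$-sum cannot simply collapse to $\Delta_{(x)}^\alpha\sigma_A$, and that the right fix is to treat $q^\alpha(x,\cdot)\,g_{N,\alpha}(x,\cdot,\xi)$ as an amplitude and reuse the amplitude-to-symbol reduction proved immediately before this theorem; that is the correct and intended mechanism, so the only thing left is the seminorm bookkeeping that both you and the paper defer.
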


\begin{proof}
First, by the Schwartz kernel theorem from Section \ref{SEC:Schwartz}, we have
\begin{align*}
ABf(x)&=(k_{A}(x)\sL Bf)(x)\\
&=\int_{\Omega}\Big[\int_{\Omega}F(x,y,z)k_{A}(x,z)dz\Big](Bf)(y)dy
\\
&=\int_{\Omega}\Big(\Big[\int_{\Omega}F(x,y,z)k_{A}(x,z)dz\Big]\\
&\times\int_{\Omega}\Big[\int_{\Omega}F(y,s,t)k_{B}(y,t)dt\Big]f(s)ds\Big)dy.
\end{align*}
Hence
\begin{align*}
\sigma_{AB}(x,\xi)&=u_{\xi}^{-1}(x)(A(Bu_{\xi}))(x) \\
&=u_{\xi}^{-1}(x)\int_{\Omega}\Big(\Big[\int_{\Omega}F(x,y,z)k_{A}(x,z)dz\Big]
\\
&\times\int_{\Omega}\Big[\int_{\Omega}F(y,s,t)k_{B}(y,t)dt\Big]u_{\xi}(s)ds\Big)dy.
\end{align*}
Now we approximate the function $k_{B}(\cdot,t)\in C_{{\rm
L}}^{\infty}(\overline{\Omega})$ by Taylor polynomial type
expansions. By using Proposition \ref{TaylorExp}, we get
\begin{align*}
\sigma_{AB}(x,\xi)&\sim
u_{\xi}^{-1}(x)\int_{\Omega}\Big(\Big[\int_{\Omega}F(x,y,z)k_{A}(x,z)dz\Big]
\\
&\times\int_{\Omega}\Big[\int_{\Omega}F(y,s,t)\sum_{\alpha\geq 0}
\frac{1}{\alpha!}q^{\alpha}(x,y)D^{(\alpha)}_{x}k_{B}(x,t)dt\Big]u_{\xi}(s)ds\Big)dy
\\
&=\sum_{\alpha\geq 0}
\frac{1}{\alpha!}u_{\xi}^{-1}(x)\int_{\Omega}\Big(\Big[\int_{\Omega}F(x,y,z)q^{\alpha}(x,y)k_{A}(x,z)dz\Big]
\\
&\times\int_{\Omega}\Big[\int_{\Omega}F(y,s,t)D^{(\alpha)}_{x}k_{B}(x,t)dt\Big]u_{\xi}(s)ds\Big)dy.
\end{align*}
Since
\begin{align*}
u_{\xi}^{-1}(y)\int_{\Omega}\Big[\int_{\Omega}&F(y,s,t)D^{(\alpha)}_{x}k_{B}(x,t)dt\Big]
u_{\xi}(s)ds=\\
&=u_{\xi}^{-1}(y)\int_{\Omega}\Big[\int_{\Omega}\sum_{\eta\in\ind}u_{\eta}(y) \ \overline{v_{\eta}(s)} \
\overline{v_{\eta}(t)}D^{(\alpha)}_{x}k_{B}(x,t)dt\Big]
u_{\xi}(s)ds
\\
&=\sum_{\eta\in\ind} u_{\xi}^{-1}(y)u_{\eta}(y) \
\int_{\Omega}\Big[\int_{\Omega}
D^{(\alpha)}_{x}k_{B}(x,t)\ \overline{v_{\eta}(t)}dt\Big]
u_{\xi}(s) \overline{v_{\eta}(s)} ds
\\
&=\sum_{\eta\in\ind} u_{\xi}^{-1}(y)u_{\eta}(y) \ \Big[
\int_{\Omega} u_{\xi}(s) \overline{v_{\eta}(s)} ds
\Big]\times \Big[\int_{\Omega} D^{(\alpha)}_{x}k_{B}(x,t)\
\overline{v_{\eta}(t)}dt\Big]
\\
&=u_{\xi}^{-1}(y)u_{\xi}(y) \
D^{(\alpha)}_{x}\widehat{k_{B}}(x,\xi)
\\
&=D^{(\alpha)}_{x}\widehat{k_{B}}(x,\xi)
\\
&=D^{(\alpha)}_{x}\sigma_{B}(x,\xi),
\end{align*}
using Definition \ref{DEF: DifferenceOper}, we have
\begin{align*}
\sigma_{AB}(x,\xi)\sim \sum_{\alpha\geq 0}
\frac{1}{\alpha!}(\Delta_{(x)}^{\alpha}\sigma_{A}(x,\xi))D^{(\alpha)}_{x}\sigma_{B}(x,\xi).
\end{align*}
Omitting a routine treatment of the remainder, this completes the proof.
\end{proof}

\section{Properties of integral kernels}
\label{SEC:kernels}

We now establish some properties of Schwartz kernels of
pseudo-differential operators with symbols in the introduced
H\"ormander-type classes. In the following Theorem \ref{TH:
KernelofPDO}, let us make the  assumption on the growth of
$L^\infty$-norms of the eigenfunctions $u_\xi$. Finding estimates
for the norms $\|u_{\xi}\|_{L^{\infty}}$ in terms of the
corresponding eigenvalues of L is a challenging problem even for
self-adjoint operators L, see e.g. Sogge and Zelditch \cite{Sogge-Zelditch:max-ef-growth-Duke} and
references therein. Thus, on tori or, more generally, on
compact Lie groups, the eigenfunctions of the Laplacian can be
chosen to be uniformly bounded. However, even for the Laplacian,
on more general manifolds, such growth depends on the geometry of
the manifold. We refer to
\cite[Remark 8.9]{Delgado-Ruzhansky:invariant} for a more thorough discussion
of this topic as well as for a list of relevant references.

\begin{theorem}[Kernel of a pseudo--differential operator] \label{TH: KernelofPDO}
Let $\mu_0$ be a constant such that there is $C>0$ such that for all $\xi\in\ind$
we have
$$
\|u_{\xi}\|_{L^{\infty}}\leq C \langle\xi\rangle^{\mu_0}.
$$
Let $a\in S^{\mu}_{\rho,\delta}(\overline{\Omega}\times\ind)$, $\rho>0$. Then the kernel
$K(x,y)$ of the pseudo-differential operator ${\rm Op_L}a$ satisfies
\begin{equation}\label{EQ:ests-L}
|({\rm L}^{\ast}_{y})^{k}K(x,y)|\leq C_{Nk}|x-y|^{-N}
\end{equation}
for any $N>(\mu+mk+2\mu_0+s_0)/\rho$ and $x\neq y$,
where $m$ is the order of the differential operator ${\rm L}$
and $s_0$ is the constant from Assumption \ref{Assumption_4}.
\end{theorem}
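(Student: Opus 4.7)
The plan is to turn symbol smoothness into kernel decay by multiplying through by $q^\alpha(x,y)$, in the spirit of classical Calder\'on-Zygmund reasoning. By Corollary~\ref{COR: SymFor}(v), the Schwartz kernel of $A={\rm Op_L}(a)$ is
$$K(x,y) = \sum_{\xi\in\ind} u_\xi(x)\, a(x,\xi)\, \overline{v_\xi(y)}.$$
Integration by parts in $y$ shows that $(L^*_y)^k K(x,y)$ is the Schwartz kernel of $A\circ L^k$, whose L-symbol, by Theorem~\ref{Composition} (all terms with $|\alpha|>0$ vanish because $\sigma_{L^k}=\lambda_\xi^k$ is independent of $x$), equals $a(x,\xi)\lambda_\xi^k \in S^{\mu+mk}_{\rho,\delta}(\overline{\Omega}\times\ind)$. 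Hence
$$(L^*_y)^k K(x,y) = \sum_{\xi\in\ind} u_\xi(x)\, a(x,\xi)\lambda_\xi^k\, \overline{v_\xi(y)}.$$

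Next I multiply by $q^\alpha(x,y)$ and expand the product in the $\{\overline{v_\xi(y)}\}$-system, recovering the coefficients by pairing against $u_\xi(y)$ and using biorthogonality $\int_\Omega \overline{v_\eta(y)}u_\xi(y)\,dy=\delta_{\xi\eta}$ together with formula \eqref{EQ:diff-symb}. This gives
$$q^\alpha(x,y)\,(L^*_y)^k K(x,y) = \sum_{\xi\in\ind} u_\xi(x)\,\Delta^\alpha_{(x)}\bigl(a(x,\xi)\lambda_\xi^k\bigr)\,\overline{v_\xi(y)}.$$
Applying the symbol-class bound $|\Delta^\alpha_{(x)}(a\lambda^k)(x,\xi)|\leq C_{\alpha,k}\langle\xi\rangle^{\mu+mk-\rho|\alpha|}$ together with the hypothesis $\|u_\xi\|_{L^\infty},\|v_\xi\|_{L^\infty}\leq C\langle\xi\rangle^{\mu_0}$ and Assumption~\ref{Assumption_4}, one obtains
$$|q^\alpha(x,y)\,(L^*_y)^k K(x,y)| \leq C\sum_{\xi\in\ind} \langle\xi\rangle^{\mu+mk+2\mu_0-\rho|\alpha|} \leq C_{\alpha,k}$$
whenever $\rho|\alpha|-(\mu+mk+2\mu_0)>s_0$, in particular for every $|\alpha|=N$ as soon as $N>(\mu+mk+2\mu_0+s_0)/\rho$.

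To convert this uniform bound into decay in $|x-y|$, I sum over $|\alpha|=N$ and invoke the combinatorial identity $\sum_{|\alpha|=N}\binom{N}{\alpha}|q^\alpha(x,y)|^2=\bigl(\sum_{j}|q_j(x,y)|^2\bigr)^N$ together with the elementary inequality $\sum_i|a_i|\geq\bigl(\sum_i|a_i|^2\bigr)^{1/2}$. This reduces the proof to the uniform lower bound $\sum_{j}|q_j(x,y)|^2\geq c|x-y|^2$ for every $x\neq y$ in $\overline{\Omega}$. The main obstacle lies precisely here: near the diagonal this follows from the rank condition on $\{\nabla_y q_j(x,x)\}_{j=1}^l$ via a first-order Taylor expansion of each $q_j$, while the extension to all of $\overline{\Omega}\times\overline{\Omega}$ off the diagonal requires combining the fourth axiom of a strongly admissible collection (the $q_j$'s vanish simultaneously only on the diagonal) with the compactness of $\overline{\Omega}$ to produce uniform constants. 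Once this is settled, the asserted inequality $|({\rm L}^{\ast}_{y})^{k}K(x,y)|\leq C_{Nk}|x-y|^{-N}$ is immediate from the preceding step.
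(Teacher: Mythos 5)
Your argument follows essentially the same route as the paper's: express $q^{\alpha}(x,y)$ times the ($y$-differentiated) kernel through difference operators acting on the symbol, sum the resulting eigenfunction series using the polynomial bound on the eigenfunctions together with Assumption \ref{Assumption_4}, and then convert the uniform bound on $q^{\alpha}(x,y)({\rm L}^{\ast}_{y})^{k}K(x,y)$ for $|\alpha|=N$ into $|x-y|^{-N}$ decay via the admissibility axioms for the $q_j$. Your handling of that last step (the multinomial identity plus the lower bound $\sum_j|q_j(x,y)|^2\geq c|x-y|^2$, obtained from the rank condition near the diagonal and from non-vanishing plus compactness away from it) is actually more explicit than the paper's, which dismisses it with one sentence.

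Two details deviate from the paper and deserve attention. First, your summation invokes $\|v_{\xi}\|_{L^{\infty}}\leq C\langle\xi\rangle^{\mu_0}$, which is \emph{not} among the stated hypotheses --- only the bound on $\|u_{\xi}\|_{L^{\infty}}$ is assumed. The paper instead routes the final estimate through the $l^{1}({\rm L})\to L^{\infty}$ Hausdorff--Young inequality of Theorem \ref{TH: HY}, so that both factors producing the exponent $2\mu_0$ are sup-norms of $u_{\xi}$ (one from evaluating $u_{\xi}$ at $x$, one from the $l^{1}({\rm L})$ weight); you should either do the same or state explicitly that you are adding a hypothesis on the $v_\xi$. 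Second, you work directly with $q^{\alpha}(x,y)({\rm L}^{\ast}_{y})^{k}K(x,y)$, whose coefficients are $\Delta^{\alpha}_{(x)}\bigl(a(x,\xi)\lambda_{\xi}^{k}\bigr)$, and you bound these by asserting $a\lambda^{k}\in S^{\mu+mk}_{\rho,\delta}$ via Theorem \ref{Composition}; that theorem as stated gives only an asymptotic expansion of the symbol itself, not the difference estimates needed for membership in the class, and $\Delta^{\alpha}_{(x)}$ does not commute with multiplication by $\lambda_{\xi}^{k}$. The paper instead estimates the elementary quantity $\lambda_{\xi}^{k}\Delta^{\alpha}_{(x)}a(x,\xi)$, concludes $({\rm L}^{\ast}_{y})^{k}(q^{\alpha}K)\in L^{\infty}$, and only then passes to $q^{\alpha}({\rm L}^{\ast}_{y})^{k}K$ (a Leibniz/commutator step it likewise leaves implicit). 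Neither version of this interchange is fatal, but in your write-up it is hidden inside a citation that does not quite deliver what is claimed.
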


In particular, if L is for example locally elliptic, \eqref{EQ:ests-L} implies that
for $x\neq y$, the kernel $K(x,y)$ is a smooth function.
And, if $a\in S^{-\infty}(\overline{\Omega}\times\ind)$, then the
integral kernel $K(x,y)$ of ${\rm Op_L}a$ is smooth in $x$ and $y$.

\begin{proof}
By Corollary \ref{COR: SymFor} we have
$$
a(x,\xi)=u_{\xi}^{-1}(x)\int_{\Omega}K(x,y)u_{\xi}(y)dy.
$$
By using Definition \ref{DEF: DifferenceOper} and by direct
calculations, recalling \eqref {EQ:diff-symb} we have
\begin{align*}
\Delta_{(x)}^{\alpha}a(x,\xi)&=u_{\xi}^{-1}(x)\sum_{\eta\in\ind}\mathcal
F_{L}\Big(q^{\alpha}(x,\cdot)u_{\xi}(\cdot)\Big)(\eta)a(x,\eta)u_{\eta}(x)\\
&=u_{\xi}^{-1}(x)\sum_{\eta\in\ind}\mathcal
F_{L}\Big(q^{\alpha}(x,\cdot)u_{\xi}(\cdot)\Big)(\eta)\int_{\Omega}K(x,y)u_{\eta}(y)dy\\
&=u_{\xi}^{-1}(x)\int_{\Omega}K(x,y)\left[\sum_{\eta\in\ind}\mathcal
F_{L}\Big(q^{\alpha}(x,\cdot)u_{\xi}(\cdot)\Big)(\eta)u_{\eta}(y)\right]dy\\
&=u_{\xi}^{-1}(x)\int_{\Omega}q^{\alpha}(x,y)K(x,y)u_{\xi}(y)dy,
\end{align*}
and also
\begin{multline*}
u_\xi(x)\lambda_\xi^k \Delta_{(x)}^{\alpha} a(x,\xi)=\int_{\Omega}q^{\alpha}(x,y)K(x,y)\lambda_{\xi}^{k}u_{\xi}(y)dy
\\ =
\int_{\Omega}q^{\alpha}(x,y)K(x,y){\rm L}_{y}^{k}u_{\xi}(y)dy
=\int_{\Omega}
({\rm L}^{\ast}_{y})^{k}(q^{\alpha}(x,y)K(x,y))u_{\xi}(y)dy.
\end{multline*}
This means that $$({\rm L}^{\ast}_{y})^{k}(q^{\alpha}(x,y)K(x,y))=
{\mathcal F}_{\rm L}^{-1} (u_\xi(x)\lambda_\xi^k \Delta_{(x)}^{\alpha} a(x,\xi))(y).$$
Since it follows from assumptions that
$$\lambda_{\xi}^{k}\Delta_{(x)}^{\alpha}  a(x,\xi)\in
S^{\mu+mk-\rho|\alpha|}(\overline{\Omega}\times\ind),$$ we have
$$
\lambda_{\xi}^{k} |\Delta_{(x)}^{\alpha}a(x,\xi)|\leq
C\langle\xi\rangle^{\mu+mk-\rho|\alpha|}.
$$
We recall now the norm $$\|a(x,\cdot)\|_{l^{1}({\rm L})}=\sum_{\xi\in\ind}| a(x,\xi)|
\|u_{\xi}\|_{L^{\infty}(\Omega)}$$ from Section \ref{SEC:lp}.
It follows that
$$
\|u_{\xi}(x)\lambda_{\xi}^{k}\Delta_{(x)}^{\alpha}a(x,\xi)\|_{l^{1}({\rm L})}\leq
C\sum_{\xi\in\ind}\langle\xi\rangle^{\mu+mk-\rho|\alpha|}
\|u_{\xi}\|_{L^{\infty}(\Omega)}^2\leq
C\sum_{\xi\in\ind}\langle\xi\rangle^{\mu+mk-\rho|\alpha|+2\mu_0}.
$$
Consequently, if
$$|\alpha|>(\mu+mk+2\mu_0+s_0)/\rho,$$
where $s_0$ is the constant from Assumption \ref{Assumption_4},
we have that
$u_{\xi}(x)\lambda_{\xi}^{k}\Delta_{(x)}^{\alpha}a(x,\xi)$ is in $l^{1}({\rm L})$
with respect to $\xi$, and hence
$({\rm L}^{\ast}_{y})^{k}(q^{\alpha}(x,y)K(x,y))$ is in $L^\infty$ by the Hausdorff-Young
inequality in Theorem \ref{TH: HY}.
Since ${\rm L}^{\ast}_{y}$ is a differential operator, we also have
$$
q^\alpha(x,y)({\rm L}^{\ast}_{y})^{k}K(x,y)\in
L^{\infty}(\Omega\times\Omega)
$$
for such $\alpha$.
By the properties of $q^\alpha$
it implies the statement of the theorem.
\end{proof}

The singular support of $w\in\mathcal D'_{{\rm L}}(\Omega)$ is
defined as the complement of the set where $w$ is smooth. Namely,
$x\notin {\rm sing\,supp}\,\, w$ if there is an open neighbourhood $U$ of
$x$ and a smooth function $f\in C_{{\rm
L}}^{\infty}(\overline{\Omega})$ such that $w(\varphi)=f(\varphi)$
for all $\varphi\in C_{{\rm L}}^{\infty}(\overline{\Omega})$ with
${\rm supp} \,\varphi\subset U$.
As an immediate consequence of Theorem \ref{TH: KernelofPDO} we
obtain the information on how the singular support is mapped by a
pseudo-differential operator:

\begin{corollary}\label{COR: SingularSupp}
Let $\sigma_{A}\in
S^{\mu}_{\rho,\delta}(\overline{\Omega}\times\ind)$, $1\geq \rho>\delta\geq 0$. Then for every
$w\in\mathcal D'_{{\rm L}}(\Omega)$ we have
\begin{equation*}\label{EQ: SingSupp}
{\rm sing\,supp}\,\, Aw\subset {\rm sing\,supp}\,\, w.
\end{equation*}
\end{corollary}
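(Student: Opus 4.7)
The plan is to carry out the classical pseudolocal argument, adapted to the ${\rm L}$-framework. Fix $x_0\notin{\rm sing\,supp}\,w$. By the definition recalled just before the statement, there exist an open neighbourhood $U$ of $x_0$ and a function $f\in C^\infty_{\rm L}(\overline\Omega)$ such that $w$ and $f$ agree as ${\rm L}$-distributions on every test function supported in $U$. Setting $w':=w-f\in\mathcal D'_{\rm L}(\Omega)$, I decompose $Aw=Af+Aw'$, where $w'$ annihilates every $\varphi\in C^\infty_{{\rm L}^*}(\overline\Omega)$ with ${\rm supp}\,\varphi\subset U$. It suffices to show that each of $Af$ and $Aw'$ is smooth (in the $C^\infty_{\rm L}$-sense) on a neighbourhood of $x_0$.

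The first piece is immediate. Since $f\in C^\infty_{\rm L}(\overline\Omega)$, Proposition \ref{LEM: FTinS} gives $\widehat f\in\mathcal S(\ind)$. The quantization formula \eqref{EQ: L-tor-pseudo-def} combined with the polynomial growth of $\sigma_A\in S^\mu_{\rho,\delta}$, and of the symbols obtained after applying powers of ${\rm L}$ using the composition theorem, shows that $Af(x)=\sum_{\xi\in\ind}u_\xi(x)\sigma_A(x,\xi)\widehat f(\xi)$ and all its iterates under ${\rm L}$ converge absolutely, so $Af\in C^\infty_{\rm L}(\overline\Omega)$ globally.

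For $Aw'$, I would use Theorem \ref{TH: KernelofPDO} together with the Schwartz kernel representation of Section \ref{SEC:Schwartz}. Choose $V\Subset U$ with $x_0\in V$, and set $d:={\rm dist}(V,\overline\Omega\setminus U)>0$. Since $\sigma_A\in S^\mu_{\rho,\delta}$ with $\rho>0$, Theorem \ref{TH: KernelofPDO} permits $|\alpha|$ arbitrarily large and yields
\[
|({\rm L}^*_y)^k K_A(x,y)|\leq C_{N,k}|x-y|^{-N}\quad\text{for every } N,k\text{ and all } x\ne y.
\]
For any $\varphi\in C^\infty_{{\rm L}^*}(\overline\Omega)$ with ${\rm supp}\,\varphi\subset V$, the pairing $\langle Aw',\varphi\rangle$ only feels the kernel on the region $\{|x-y|\geq d\}$, because $w'$ kills whatever is supported in $U$ and $\varphi$ lives in $V\subset U$. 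Iterating these estimates and transferring the $({\rm L}^*_y)^k$-derivatives onto the convolution kernel through \eqref{EQ:int2}--\eqref{EQ:int3}, one identifies $Aw'$ on $V$ with a function $h$ satisfying ${\rm L}^k h\in L^2(V)$ for every $k$, which is exactly the smoothness required by Definition \ref{TestFunSp}.

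The hardest step is making the heuristic ``$w'$ vanishes near $x_0$, $\varphi$ is supported near $x_0$, so only the off-diagonal kernel contributes'' rigorous within the ${\rm L}$-framework, where one cannot freely multiply by classical cut-offs without risking the boundary conditions. Concretely, one must extend $A$ continuously to $\mathcal D'_{\rm L}(\Omega)\to\mathcal D'_{\rm L}(\Omega)$ compatibly with the Schwartz-kernel representation and realise the localisation of $w'$ (or, dually, the action of $A^*$ on $\varphi$) while respecting (BC+); this relies on the duality between $C^\infty_{\rm L}$ and $\mathcal D'_{\rm L}$ developed in Section \ref{SEC:TD} and on the adjoint theorem placing $A^*$ in the corresponding ${\rm L}^*$-symbol class. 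Once these compatibilities are in place, the off-diagonal decay of $K_A$ supplied by Theorem \ref{TH: KernelofPDO} closes the argument in the standard pseudolocal manner.
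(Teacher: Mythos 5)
Your argument is the standard pseudolocality proof via the off-diagonal smoothness of the Schwartz kernel established in Theorem \ref{TH: KernelofPDO}, which is precisely how the paper obtains this corollary (it is stated there as an immediate consequence of that theorem, with no further detail supplied). Your decomposition $w=f+w'$ and the use of the kernel estimates on the region $|x-y|\geq d$ match the intended argument, and the caveats you raise about carrying out the localisation compatibly with (BC+) identify exactly the technical points the paper leaves implicit.
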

For elliptic operators,
in Corollary \ref{COR:SingularSupp-ell} we state also the inverse inclusion.

\section{${\rm L}$-elliptic pseudo--differential operators}
\label{SEC:elliptic}

In this section we discuss operators that are elliptic in the symbol classes
generated by L. For such operators we can obtain parametrix and then also
a-priori estimates by the properties of pseudo-differential operators in, for example,
Sobolev spaces, once they are established in Section \ref{SEC:L2}, see Theorem \ref{L2-Bs and El-ty}.
Thus, from the asymptotic expansion for the composition of
pseudo-differential operators, we get an expansion for a
parametrix of an elliptic operator:

\begin{theorem}[L-ellipticity]\label{El-ty}
Let $1\geq \rho>\delta\geq 0$.
Let $\sigma_A\in S^\mu_{\rho,\delta}(\overline{\Omega}\times\ind)$ be
elliptic in the sense that there exist constants $C_0>0$ and
$N_0\in\mathbb N$ such that
\begin{equation}\label{elliptic}
  |\sigma_A(x,\xi)|
  \geq C_0 \langle\xi\rangle^\mu
\end{equation}
for all $(x,\xi)\in\overline{\Omega}\times\ind$ for which
$\xi\geq N_0$; this is equivalent to assuming that there exists
$\sigma_B\in S^{-\mu}_{\rho,\delta}(\overline{\Omega}\times\ind)$ such that
$I-BA,I-AB$ are in ${\rm Op_L} S^{-\infty}$.
 Let $$A \sim \sum_{j=0}^\infty A_j,$$
 $\sigma_{A_j}\in S^{\mu-(\rho-\delta)j}_{\rho,\delta}(\overline{\Omega}\times\ind)$.
Then $$B \sim \sum_{k=0}^\infty B_k, $$ where $B_k\in
S^{-\mu-(\rho-\delta)k}_{\rho,\delta}(\overline{\Omega}\times\ind)$ is such that
$$\sigma_{B_0}(x,\xi)= 1/\sigma_{A_0}(x,\xi)$$ for large enough
$\xi$, and recursively
$$
  \sigma_{B_N}(x,\xi) = \frac{-1}{\sigma_{A_0}(x,\xi)}
  \sum_{k=0}^{N-1} \sum_{j=0}^{N-k}
  \sum_{|\alpha|=N-j-k}
        \frac{1}{\alpha!} \left[
          \Delta_{(x)}^{\alpha} \sigma_{A_j}(x,\xi) \right]
        D_x^{(\alpha)} \sigma_{B_k}(x,\xi).
$$
\end{theorem}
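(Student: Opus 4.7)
The plan has two parts: first establish the equivalence between the ellipticity condition \eqref{elliptic} and the existence of a parametrix $B \in {\rm Op}_L S^{-\mu}_{\rho,\delta}(\overline{\Omega}\times\ind)$ with $I - BA, I - AB \in {\rm Op}_L S^{-\infty}$; then construct $B$ via the stated recursion. For the easy implication, if such a $B$ exists, then Theorem \ref{Composition} applied to $BA$ gives $\sigma_{B}(x,\xi)\sigma_{A}(x,\xi) = 1 + r(x,\xi)$ with $r \in S^{-(\rho-\delta)}_{\rho,\delta}$, whence for $\langle\xi\rangle$ sufficiently large $|\sigma_{A}(x,\xi)| \geq \tfrac{1}{2}|\sigma_{B}(x,\xi)|^{-1} \gtrsim \langle\xi\rangle^{\mu}$, giving \eqref{elliptic}.

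For the nontrivial direction I would construct $B \sim \sum_k B_k$ asymptotically. Fix a function $\chi:\ind \to [0,1]$ that equals $1$ for $\langle\xi\rangle \geq N_1$ (with $N_1 \geq N_0$ chosen large enough), and set
\[
\sigma_{B_0}(x,\xi) := \chi(\xi)/\sigma_{A_0}(x,\xi),
\]
which is well-defined on the support of $\chi$ by ellipticity. Define $\sigma_{B_N}$ for $N \geq 1$ by the recursive formula in the statement, and take $\sigma_B$ as an asymptotic sum of $\{\sigma_{B_N}\}_{N \geq 0}$, which exists as a genuine symbol in $S^{-\mu}_{\rho,\delta}(\overline{\Omega}\times\ind)$ by the asymptotic-sums theorem of Section \ref{SEC:differences}. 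An induction on $N$ shows $\sigma_{B_N} \in S^{-\mu-(\rho-\delta)N}_{\rho,\delta}$, since each summand in the recursive formula factors as $(1/\sigma_{A_0}) \cdot (\Delta^\alpha_{(x)}\sigma_{A_j}) \cdot (D^{(\alpha)}_x\sigma_{B_k})$, with orders $-\mu$, $\mu - (\rho-\delta)j - \rho|\alpha|$, and $-\mu - (\rho-\delta)k + \delta|\alpha|$ respectively, summing to $-\mu - (\rho-\delta)(j+k+|\alpha|) = -\mu - (\rho-\delta)N$.

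The main obstacle is the base case: showing that $\chi(\xi)/\sigma_{A_0}(x,\xi)$ genuinely belongs to $S^{-\mu}_{\rho,\delta}(\overline{\Omega}\times\ind)$. One must estimate $\Delta^\alpha_{(x)} D^{(\beta)}_x(1/\sigma_{A_0})$ by induction on $|\alpha|+|\beta|$. Applying $\Delta^\alpha_{(x)} D^{(\beta)}_x$ to the identity $\sigma_{A_0} \cdot (1/\sigma_{A_0}) = \chi(\xi)$ and using a Leibniz-type formula for the difference operators (derivable from the multiplication-by-$q^\alpha$ definition \eqref{EQ:diff-symb}, which need not hold exactly but does hold modulo controlled lower-order terms analogous to the situation in \cite{Ruzhansky-Turunen-Wirth:JFAA}) allows one to solve algebraically for the top-order quantity in terms of lower-order ones; combining this with \eqref{elliptic} then closes the induction. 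This step is the technical heart of the proof.

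To conclude $I - AB \in {\rm Op}_L S^{-\infty}$, I would expand $\sigma_{AB}$ via Theorem \ref{Composition} as $\sum_{j,k,\alpha} \frac{1}{\alpha!}(\Delta^\alpha_{(x)}\sigma_{A_j}) D^{(\alpha)}\sigma_{B_k}$ and group terms with $j+k+|\alpha| = N$. Isolating the unique contribution $\sigma_{A_0}\sigma_{B_N}$ (corresponding to $j=|\alpha|=0$, $k=N$), the recursive definition of $\sigma_{B_N}$ makes the $N \geq 1$ homogeneous sums vanish identically, while the $N=0$ sum equals $\chi(\xi)$, which differs from $1$ only on a finite set of $\xi$ and therefore lies in $S^{-\infty}$. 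For $I - BA$, I would repeat the construction with the roles of left and right swapped, obtaining a left parametrix $B'$, and then the standard calculation
\[
B' \;\sim\; B'(AB) \;=\; (B'A)B \;\sim\; B \pmod{{\rm Op}_L S^{-\infty}}
\]
shows that the symbol $B$ already constructed is a two-sided parametrix, completing the proof.
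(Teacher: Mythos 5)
Your proposal is correct and follows essentially the same route as the paper: derive the recursion for $\sigma_{B_N}$ formally from the asymptotic composition formula (Theorem~\ref{Composition}), group terms by $j+k+|\alpha|=N$, and solve for $\sigma_{B_N}$. The paper's proof is terser and expands $\sigma_{BA}$, which would actually produce a recursion with $\Delta^\alpha_{(x)}$ acting on $\sigma_{B_k}$ and $D^{(\alpha)}$ on $\sigma_{A_j}$, not the one stated in the theorem; your choice of expanding $\sigma_{AB}$ is what directly yields the recursion as written. Beyond that, you add worthwhile detail the paper skips: the easy implication (parametrix $\Rightarrow$~\eqref{elliptic}), the left/right parametrix argument to obtain $I-BA\in{\rm Op_L}(S^{-\infty})$ as well, and --- the key point --- an honest identification that the genuine technical content lies in proving $\chi/\sigma_{A_0}\in S^{-\mu}_{\rho,\delta}(\overline{\Omega}\times\ind)$, which requires an (approximate) Leibniz identity for the $x$-dependent difference operators $\Delta^\alpha_{(x)}$. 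The paper dismisses this with ``it is easy to check'' and leaves it unverified; your inductive scheme, applying $\Delta^\alpha_{(x)} D^{(\beta)}_x$ to the identity $\sigma_{A_0}\cdot(1/\sigma_{A_0})=\chi$ and solving for the top-order term using~\eqref{elliptic}, is the right plan, though its execution would need that Leibniz rule to be established explicitly, a nontrivial step in this non-invariant setting that neither you nor the paper fully carries out.
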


\begin{proof} Now $I\sim BA,$ so that by the composition Theorem
\ref{Composition} we have
\begin{align*}
1&\sim\sigma_{BA}(x, \xi)
\\
&\sim\sum_{\alpha\in\mathbb
N_0^{l}}\frac{1}{\alpha!}(\Delta_{(x)}^{\alpha}\sigma_{B}(x,\xi))D^{(\alpha)}_{x}\sigma_{A}(x,\xi)
\\
&\sim\sum_{\alpha\in\mathbb
N_0^{l}}\frac{1}{\alpha!}(\Delta_{(x)}^{\alpha}\sum_{k=0}^\infty
\sigma_{B_k}(x,\xi))D^{(\alpha)}_{x}\sum_{j=0}^\infty
\sigma_{A_j}(x,\xi),
\end{align*} where we want to solve it for
$\sigma_{B_k}$. Notice that $A_0$ is elliptic if and only if $A$
is elliptic. Moreover, without a loss of generality we may assume
that $\sigma_{A_0}$ does not vanish anywhere. Obviously, we can
demand that $1=\sigma_{B_0}(x,\xi)\sigma_{A_0}(x,\xi)$, and that
$$
0=\sum_{j+k+|\alpha|=N}\frac{1}{\alpha!}\Big[\Delta_{(x)}^{\alpha}
\sigma_{B_k}(x,\xi)\Big] D^{(\alpha)}_{x} \sigma_{A_j}(x,\xi).
$$
Then the trivial solution of these equations is the recursion of
the theorem. It is easy to check that $\sigma_{B_N}\in
S^{-\mu-(\rho-\delta)N}_{\rho,\delta}(\overline{\Omega}\times\ind)$. Thus
$B\sim\sum_{k=0}^{\infty}B_k$.
\end{proof}

Theorem \ref{TH: KernelofPDO} applied to the parametrix from in
Theorem \ref{El-ty}, implies the inverse inclusion to the singular supports from
Corollary \ref{COR: SingularSupp} for elliptic operators:

\begin{corollary}\label{COR:SingularSupp-ell}
Let $1\geq \rho>\delta\geq 0$ and assume that
$\sigma_{A}\in S^{\mu}_{\rho,\delta}(\overline{\Omega}\times\ind)$ is {\rm L}-elliptic. Then for every
$w\in\mathcal D'_{{\rm L}}(\Omega)$ we have
\begin{equation*}\label{EQ: SingSupp}
{\rm sing\,supp}\,\, Aw={\rm sing\,supp}\,\, w.
\end{equation*}
\end{corollary}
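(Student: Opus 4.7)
The forward inclusion ${\rm sing\,supp}\, Aw\subset{\rm sing\,supp}\, w$ is already given by Corollary \ref{COR: SingularSupp}, so the task is the reverse inclusion, and the plan is to exploit the parametrix constructed in Theorem \ref{El-ty}. Since $\sigma_A\in S^{\mu}_{\rho,\delta}(\overline{\Omega}\times\ind)$ is L-elliptic with $1\geq\rho>\delta\geq 0$, Theorem \ref{El-ty} furnishes an operator $B$ with $\sigma_B\in S^{-\mu}_{\rho,\delta}(\overline{\Omega}\times\ind)$ such that $I-BA\in{\rm Op_L}(S^{-\infty}(\overline{\Omega}\times\ind))$. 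Write $R:=I-BA$, so that for every $w\in\mathcal D'_{{\rm L}}(\Omega)$ we have the identity
\begin{equation*}
w=B(Aw)+Rw.
\end{equation*}

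The first key observation is that $R$ is a smoothing operator in the sense that $Rw\in C^\infty_{{\rm L}}(\overline{\Omega})$ for every $w\in\mathcal D'_{{\rm L}}(\Omega)$. Indeed, applying Theorem \ref{TH: KernelofPDO} to $\sigma_R\in S^{-\infty}(\overline{\Omega}\times\ind)$ (taking arbitrarily large $N$ and $k$) shows that the Schwartz kernel $K_R(x,y)$ is smooth on $\overline{\Omega}\times\overline{\Omega}$, and in particular that $({\rm L}^*_y)^kK_R(x,y)$ is uniformly bounded for every $k$. Pairing $w$ against such a kernel through the Schwartz kernel representation therefore produces a function lying in every ${\rm Dom}({\rm L}^k)$, hence in $C^\infty_{{\rm L}}(\overline{\Omega})$, so ${\rm sing\,supp}\, Rw=\emptyset$.

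The second ingredient is Corollary \ref{COR: SingularSupp} applied to $B$, which is legitimate since $\sigma_B\in S^{-\mu}_{\rho,\delta}(\overline{\Omega}\times\ind)$ and the type $(\rho,\delta)$ still satisfies $1\geq\rho>\delta\geq 0$. This yields
\begin{equation*}
{\rm sing\,supp}\, B(Aw)\subset{\rm sing\,supp}\, Aw.
\end{equation*}
Combining with the identity $w=B(Aw)+Rw$ and using the elementary fact that ${\rm sing\,supp}(f+g)\subset{\rm sing\,supp}\, f\cup{\rm sing\,supp}\, g$ (immediate from the local definition of singular support given before Corollary \ref{COR: SingularSupp}), we conclude
\begin{equation*}
{\rm sing\,supp}\, w\subset{\rm sing\,supp}\, B(Aw)\cup{\rm sing\,supp}\, Rw\subset{\rm sing\,supp}\, Aw.
\end{equation*}

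The main technical point to nail down is the smoothing property of $R$: one needs to check that operators with kernel in $C^\infty(\overline{\Omega}\times\overline{\Omega})$ indeed map $\mathcal D'_{{\rm L}}(\Omega)$ into $C^\infty_{{\rm L}}(\overline{\Omega})$, i.e.\ that the resulting function not only is smooth but also belongs to the intersection of all ${\rm Dom}({\rm L}^k)$, so that the boundary conditions propagate. This is where assumption (BC+) together with the rapid decay of the L-Fourier coefficients of $K_R(x,\cdot)$ (uniformly in $x$) must be invoked, following the pattern already used in Proposition \ref{ConvProp} to guarantee $f\sL g\in C_{{\rm L}}^\infty(\overline{\Omega})$; everything else in the argument is a routine combination of the parametrix construction and the forward inclusion.
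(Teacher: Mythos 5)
Your proof is correct and follows essentially the same route the paper intends: the paper derives the corollary from the parametrix of Theorem \ref{El-ty} together with the kernel estimates of Theorem \ref{TH: KernelofPDO}, and your argument spells this out via $w=B(Aw)+Rw$ with $R\in{\rm Op_L}(S^{-\infty})$ smoothing and Corollary \ref{COR: SingularSupp} applied to $B$. The caveat you flag at the end — that $Rw$ lands in $C^\infty_{{\rm L}}(\overline{\Omega})$ and not merely in $C^\infty(\Omega)$ — is a real point, and it is handled most cleanly by the quantization formula $Rw(x)=\sum_{\xi\in\ind}u_\xi(x)\sigma_R(x,\xi)\widehat{w}(\xi)$ together with the rapid decay of $\sigma_R$ in $\xi$, the polynomial growth of $\widehat{w}$, and (BC+), exactly along the lines of the argument for $f\sL g\in C^\infty_{{\rm L}}(\overline{\Omega})$ in Proposition \ref{ConvProp}.
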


\section{Sobolev embedding theorem}
\label{SEC:embeddings}


In this section we prove an example of a Sobolev embedding theorem
for Sobolev spaces $\mathcal H^s_{\rm L}$ associated to L, considered in Section \ref{SEC:Sobolev}.
However, only limited conclusions
are possible in the abstract setting when no further specifics about L
are available.
Now, let $C({\Omega})$ be the Banach space
under the norm
$$
\|f\|_{C({\Omega})}:=\sup\limits_{x\in{\Omega}}
|f(x)|.
$$
We recall that we have a differential operator L of order $m$ with
smooth coefficients in the open set $\Omega\subset\mathbb R^n$,
and also the operator ${\rm L}^\circ$ from \eqref{EQ:Lo-def}.

\medskip
The following theorem is conditional to the local
regularity estimate \eqref{EQ:Sob-as}. It is satisfied with $\varkappa=1$
if, for example, L is locally elliptic, i.e. elliptic in the classical sense of $\mathbb R^n$.
However, if L is for example a sum of squares satisfying
H\"ormander's commutator condition, the number $\varkappa\geq 1$ may depend on the order
to which the H\"ormander condition is satisfied, see e.g.
\cite{Garetto-Ruzhansky:sum-JDE} in the context of compact Lie groups.

\begin{theorem}\label{TH:SETh}
Let $k$ be an integer such that $k>n/2$.
Let $\varkappa$ be such that the operators ${\rm L}$ and ${\rm L}^{\circ}$
satisfy the inequality
\begin{equation}\label{EQ:Sob-as}
\Big\|\frac{\partial^{\alpha}f}{\partial
x^{\alpha}}\Big\|_{L^{2}(\Omega)}\leq C\Big\|({\rm I}+{\rm L}^\circ{\rm
L})^{\frac{\varkappa k}{2m}}f\Big\|_{L^{2}(\Omega)}
\end{equation}
for all $f\in C^{\infty}({\Omega})$, for all
$\alpha\in\mathbb N_0^{n}$ with $|\alpha|\leq k$. Then we have the
continuous embedding
$$
\mathcal H^{\varkappa k}_{{\rm L}}(\Omega)\hookrightarrow C({\Omega}).
$$
\end{theorem}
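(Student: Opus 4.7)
The plan is to reduce the statement to the classical Sobolev embedding $H^k(\Omega)\hookrightarrow C(\Omega)$, which is available once we control the usual partial derivatives by the $\mathcal H^{\varkappa k}_{\rm L}$-norm. The hypothesis \eqref{EQ:Sob-as} already expresses exactly such a bound in terms of $(I+{\rm L}^\circ {\rm L})^{\varkappa k/(2m)}$, so the only nontrivial point is to identify $\|(I+{\rm L}^\circ{\rm L})^{\varkappa k/(2m)}f\|_{L^2}$ with (a multiple of) $\|f\|_{\mathcal H^{\varkappa k}_{\rm L}}$.

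First I would recall from Proposition~\ref{SobHilSpace} that the map
$$\varphi_{-s}f=\sum_{\xi\in\ind}\langle\xi\rangle^{s}\widehat{f}(\xi)u_\xi=(I+{\rm L}^\circ{\rm L})^{s/(2m)}f$$
is a linear isometry $\mathcal H^{s}_{{\rm L}}(\Omega)\to \mathcal H^{0}_{{\rm L}}(\Omega)$. Since the inner product on $\mathcal H^{0}_{{\rm L}}(\Omega)$ is, by Parseval's identity \eqref{Parseval} in Proposition~\ref{PlanchId}, nothing but the usual $L^2(\Omega)$ inner product, we obtain the key identity
\begin{equation*}
\|f\|_{\mathcal H^{s}_{{\rm L}}(\Omega)}=\bigl\|(I+{\rm L}^\circ{\rm L})^{s/(2m)}f\bigr\|_{L^2(\Omega)}
\end{equation*}
for every $s\in\mathbb R$, in particular for $s=\varkappa k$.

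Plugging this into the hypothesis \eqref{EQ:Sob-as}, I conclude that every $f\in \mathcal H^{\varkappa k}_{{\rm L}}(\Omega)$ satisfies
$$\|\partial^{\alpha}f\|_{L^2(\Omega)}\leq C\,\|f\|_{\mathcal H^{\varkappa k}_{{\rm L}}(\Omega)}\quad\text{for all }|\alpha|\leq k,$$
i.e. $f\in H^{k}(\Omega)$ with continuous inclusion $\mathcal H^{\varkappa k}_{{\rm L}}(\Omega)\hookrightarrow H^{k}(\Omega)$. Since $k>n/2$, the classical Sobolev embedding gives $H^{k}(\Omega)\hookrightarrow C(\Omega)$ with $\|f\|_{C(\Omega)}\leq C'\|f\|_{H^{k}(\Omega)}$, and chaining the two estimates yields the desired embedding with the bound $\|f\|_{C(\Omega)}\leq C''\|f\|_{\mathcal H^{\varkappa k}_{{\rm L}}(\Omega)}$.

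The only conceptual subtlety I expect is the density/approximation issue needed to make the inequality \eqref{EQ:Sob-as} — a priori stated for smooth functions — meaningful on the abstract Sobolev space $\mathcal H^{\varkappa k}_{{\rm L}}(\Omega)$. I would handle this by approximating $f$ by the partial sums $\sum_{|\xi|\leq N}\widehat f(\xi)u_\xi\in C^\infty_{{\rm L}}(\overline\Omega)$, applying \eqref{EQ:Sob-as} to each partial sum (where the right-hand side is controlled by $\|f\|_{\mathcal H^{\varkappa k}_{{\rm L}}}$ uniformly in $N$), and passing to the limit using the completeness of $H^{k}(\Omega)$.
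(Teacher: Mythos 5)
Your argument is correct and follows essentially the same route as the paper: both reduce the statement to the classical Sobolev embedding $H^{k}(\Omega)\hookrightarrow C(\Omega)$ for $k>n/2$ and combine the hypothesis \eqref{EQ:Sob-as} with the identification of $\big\|({\rm I}+{\rm L}^\circ{\rm L})^{\frac{\varkappa k}{2m}}f\big\|_{L^2(\Omega)}$ with the $\mathcal H^{\varkappa k}_{{\rm L}}(\Omega)$-norm, together with a density argument. The only minor caveat is that in the non-self-adjoint case this identification is a two-sided norm equivalence (with constants coming from the Riesz basis property, Lemma \ref{LEM: FTl2}) rather than the exact isometry you assert, but a constant is all one needs for a continuous embedding, and the paper itself only uses a one-sided inequality with a constant at this step.
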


\begin{proof}
By the local Sobolev embedding theorem at $x\in\Omega$, for
$|\alpha|\leq k$, we have
$$
|f(x)|\leq C\left(\sum_{|\alpha|\leq k}\int_{\Omega}\Big|\frac{\partial^{\alpha}}{\partial y^{\alpha}}f(y)\Big|^{2}dy\right)^{1/2}.
$$
Thus, considering $f\in C^\infty(\Omega)$ without loss of generality due to the density of
$C^\infty(\Omega)$, and for all
$x\in{\Omega}$, in view of the assumption \eqref{EQ:Sob-as}
we have
\begin{align*}
|f(x)|&\leq C\left(\sum_{|\alpha|\leq
k}\int_{\Omega}\Big|\frac{\partial^{\alpha}}{\partial
y^{\alpha}}f(y)\Big|^{2}dy\right)^{1/2}
\\
&\leq C\left(\int_{\Omega}|({\rm I}+{\rm L}^\circ{\rm
L})^{\frac{\varkappa k}{2m}}f(y)|^{2}dy\right)^{1/2}
\\
&\leq C\left(\sum_{\xi\in\ind}
\langle\xi\rangle^{2\varkappa k}\widehat{f}(\xi)\overline{\widehat{f}^{\ast}(\xi)}\right)^{1/2}=
C\|f\|_{\mathcal
H^{\varkappa k}_{{\rm L}}(\Omega)}.
\end{align*} is true. Hence
we obtain the statement of the theorem.
\end{proof}

\section{Conditions for $L^{2}$-boundedness}
\label{SEC:L2}

In this section we will discuss what conditions on the ${\rm L}$-symbol
$a$ guarantee the $L^{2}$-boundedness of the
corresponding pseudo-differential operator ${\rm
Op_L}(a):C^{\infty}_{{\rm L}}(\overline{\Omega})\rightarrow \mathcal
D'_{{\rm L}}(\Omega)$.

\begin{theorem}\label{L2-Bs}
Let $k$ be an integer $>n/2$.
Let $a:\overline{\Omega}\times\ind\rightarrow\mathbb C$ be such
that
\begin{equation}\label{EQ:torus-L2-pse}
  \left| \partial^{\alpha}_{x} a(x,\xi) \right| \leq
  C \quad\textrm{ for all } (x,\xi)\in\overline{\Omega}\times\ind,
\end{equation}
and all $|\alpha|\leq k$, all $x\in\Omega$ and $\xi\in\ind$. Then the operator
${\rm Op_L}(a)$ extends to a bounded operator from $L^{2}(\Omega)$
to $L^{2}(\Omega)$.
\end{theorem}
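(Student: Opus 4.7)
My strategy is to reduce the $L^{2}$-boundedness of $A=\mathrm{Op}_{\rm L}(a)$ to an $\ell^{2}$-bound on the Fourier side via Lemma \ref{LEM: FTl2}, and then to verify that bound by a Schur-test argument applied to the Fourier matrix of $A$ in the biorthogonal basis. First, since $C^{\infty}_{\rm L}(\overline{\Omega})$ is dense in $L^{2}(\Omega)$ by Assumption \ref{Assumption_3}, it suffices to prove $\|Af\|_{L^{2}}\leq C\|f\|_{L^{2}}$ for $f\in C^{\infty}_{\rm L}(\overline{\Omega})$, after which the operator extends by continuity. Using Theorem \ref{QuanOper} and biorthogonality, the $L$-Fourier coefficients of $Af$ are
\begin{equation*}
\widehat{Af}(\eta)=\sum_{\xi\in\ind}T(\eta,\xi)\,\widehat{f}(\xi),\qquad T(\eta,\xi):=\int_{\Omega}u_{\xi}(x)\,a(x,\xi)\,\overline{v_{\eta}(x)}\,dx,
\end{equation*}
and Lemma \ref{LEM: FTl2} applied twice yields
\begin{equation*}
\|Af\|_{L^{2}}^{2}\lesssim\sum_{\eta\in\ind}|\widehat{Af}(\eta)|^{2}\qquad\text{and}\qquad\sum_{\xi\in\ind}|\widehat{f}(\xi)|^{2}\lesssim\|f\|_{L^{2}}^{2}.
\end{equation*}
Thus it is enough to prove that the two Schur-type sums $\sup_{\eta}\sum_{\xi}|T(\eta,\xi)|$ and $\sup_{\xi}\sum_{\eta}|T(\eta,\xi)|$ are finite, possibly after inserting appropriate weights and applying Cauchy--Schwarz to move the weights onto $\widehat{f}$.

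Next, I would extract decay of $T(\eta,\xi)$ from the only smoothness available, namely $|\partial^{\alpha}_{x}a(x,\xi)|\leq C$ for $|\alpha|\leq k$. The key observation is that $T(\eta,\xi)$ is the $\eta$-th $L$-Fourier coefficient of the function $g_{\xi}(x):=a(x,\xi)u_{\xi}(x)$, which lies in the classical Sobolev space $H^{k}(\Omega)$ uniformly in $\xi$ (because $u_{\xi}\in C^{\infty}_{\rm L}(\overline{\Omega})$ is smooth and the classical $x$-derivatives of $a$ are uniformly bounded by hypothesis). The classical Sobolev embedding $H^{k}(\Omega)\hookrightarrow L^{\infty}(\Omega)$, valid precisely because $k>n/2$, together with Parseval-type comparisons between classical Fourier expansions and the $L$-Fourier expansion, should then transfer this smoothness into $\ell^{1}$-type summability of $\{T(\eta,\xi)\}_{\eta}$ in $\eta$, uniformly in $\xi$. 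Combined with Assumption \ref{Assumption_4} (summability of $\sum\langle\xi\rangle^{-s_{0}}$) this closes the Schur estimate, and the final step uses Lemma \ref{LEM: FTl2} in reverse to replace $\sum_{\xi}|\widehat{f}(\xi)|^{2}$ by $\|f\|_{L^{2}}^{2}$.

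\emph{Main obstacle.} In the toroidal setting of \cite{Ruzhansky-Turunen-JFAA-torus} the product $u_{\xi}\overline{v_{\eta}}$ reduces to a single character $e^{2\pi i(\xi-\eta)\cdot x}$, and a single integration by parts in $x$ yields joint decay $|T(\eta,\xi)|\lesssim\langle\xi-\eta\rangle^{-k}$ in both Schur directions at once; in our abstract non-self-adjoint framework no such convolution structure is available, so decay in $\eta$ must be extracted from the smoothness of $g_{\xi}$ without simultaneously gaining decay in $\xi$. A further subtlety is that $g_{\xi}=a(\cdot,\xi)u_{\xi}$ need not belong to $\mathrm{Dom}({\rm L}^{N})$ despite $u_{\xi}$ doing so, so one cannot freely apply the duality $\langle {\rm L}^{N}g_{\xi},v_{\eta}\rangle=\langle g_{\xi},({\rm L}^{*})^{N}v_{\eta}\rangle=\overline{\lambda_{\eta}}^{N}\langle g_{\xi},v_{\eta}\rangle$ without boundary-term analysis. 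The technical heart of the proof is therefore to bypass this by working with classical Sobolev norms on the bounded domain $\Omega$ and the uniform growth of the eigenfunctions, and to show that the resulting estimate is strong enough to close the Schur inequality under the condition $k>n/2$.
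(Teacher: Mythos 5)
Your plan is genuinely different from the paper's and, as written, contains a fatal gap. The Schur argument hinges on the claim that $g_{\xi}(x):=a(x,\xi)u_{\xi}(x)$ lies in the classical Sobolev space $H^{k}(\Omega)$ uniformly in $\xi$. This is false: by the Leibniz rule,
\begin{equation*}
\partial^{\alpha}_{x}g_{\xi}=\sum_{\beta\leq\alpha}\binom{\alpha}{\beta}\bigl(\partial^{\beta}_{x}a\bigr)(x,\xi)\,\partial^{\alpha-\beta}_{x}u_{\xi}(x),
\end{equation*}
and while $\partial^{\beta}_{x}a$ is uniformly bounded by hypothesis, the factors $\partial^{\alpha-\beta}_{x}u_{\xi}$ grow with $\xi$ (already on the circle $u_{\xi}=e^{2\pi i\xi x}$ gives $\|\partial^{\gamma}u_{\xi}\|_{L^{\infty}}\asymp|\xi|^{|\gamma|}$). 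So $\|g_{\xi}\|_{H^{k}}$ blows up as $|\xi|\to\infty$, the Sobolev-to-$L^{\infty}$ step produces only $\xi$-dependent bounds, and you acquire no uniform-in-$\xi$ decay in $\eta$. Even granting such decay, the obstacle you flag at the end is structural rather than technical: with decay only in $\eta$ and no off-diagonal localisation replacing $\langle\xi-\eta\rangle^{-k}$, the Schur sum $\sup_{\eta}\sum_{\xi}|T(\eta,\xi)|$ cannot close, since nothing makes $T$ small in $\xi$.

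The paper avoids the Fourier matrix $T$ altogether via a frozen-coefficient argument. It introduces $A_{y}f(x):=\sum_{\xi\in\ind}u_{\xi}(x)\,a(y,\xi)\,\widehat{f}(\xi)$, which for each fixed $y$ is a Fourier multiplier with bounded symbol $a(y,\cdot)$ and hence $L^{2}$-bounded uniformly in $y$, by Lemma \ref{LEM: FTl2}. Since ${\rm Op_L}(a)f(x)=A_{x}f(x)$, one estimates $\|{\rm Op_L}(a)f\|_{L^{2}}^{2}\leq\int_{\Omega}\sup_{y}|A_{y}f(x)|^{2}\,dx$, applies the classical Sobolev embedding in the auxiliary parameter $y\in\Omega\subset\mathbb{R}^{n}$ (this is exactly where $k>n/2$ and the hypothesis on $\partial^{\alpha}_{x}a$, evaluated along $y\mapsto a(y,\xi)$, enter), and then uses Fubini to reduce to the multiplier estimate for each symbol $\partial^{\alpha}_{y}a(y,\cdot)$. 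No Sobolev norm of $u_{\xi}$, no $\xi$- or $\eta$-decay of matrix entries, and no domain issues for $g_{\xi}$ ever arise; I would abandon the Schur route and use this device instead.
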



\begin{proof}
Let us define an operator $A_y$ by
$$
A_{y}f(x):=\sum_{\xi\in\ind}\int_{\Omega} u_{\xi}(x)\overline{v_{\xi}(z)}a(y,
\xi)f(z)dz,
$$
so that $A_{x}f(x)={\rm Op_L}(a)f(x).$ Then
$$
\|{\rm Op_L}(a)f\|_{L^{2}(\Omega)}^{2}=
\int_{\Omega}|A_{x}f(x)|^{2}dx\leq\int_{\Omega}\sup\limits_{y\in\Omega}|A_{y}f(x)|^{2}dx,
$$
and by an application of the local Sobolev embedding theorem we get
$$
\sup_{y\in{\Omega}}|A_{y}f(x)|^{2}\leq C\sum_{|\alpha|\leq
k}\int_{\Omega}|\partial_{y}^{\alpha}A_{y}f(x)|^{2}dy.
$$
Therefore, using the Fubini theorem to change the order of
integration, we obtain
\begin{align*}
\|{\rm Op_L}(a)f\|_{L^{2}(\Omega)}^{2}&\leq C\sum_{|\alpha|\leq
k}\int_{\Omega}\int_{\Omega}|\partial_{y}^{\alpha}A_{y}f(x)|^{2}dxdy
\\
&\leq C\sum_{|\alpha|\leq k}\sup_{y\in{\Omega}}\int_{\Omega}|
\partial_{y}^{\alpha}A_{y}f(x)|^{2}dx
\\
&=C\sum_{|\alpha|\leq k}\sup_{y\in{\Omega}}\|\partial_{y}^{\alpha}
A_{y}f\|_{L^{2}(\Omega)}^{2}
\\
&\leq C\sum_{|\alpha|\leq k}\sup_{y\in{\Omega}}\sup_{\xi\in\ind}\|\partial_{y}^{\alpha}
a(y,\xi)\|_{L^{2}(\Omega)}^{2}\|f\|_{L^{2}(\Omega)}^{2},
\end{align*}
using the $L^{2}$-boundedness of multipliers with bounded symbols following
from Lemma \ref{LEM: FTl2}, completing the proof.
\end{proof}

From a suitable adaption of the composition Theorem
\ref{Composition}, using that by Proposition \ref{TaylorExp}
the operators $\partial^{\alpha}_{x}$ and $D^{(\alpha)}_{x}$ can
be expressed in terms of each other as linear combinations with smooth
coefficients, we immediately obtain the result in Sobolev
spaces:

\begin{corollary}\label{Hs-Bs}
Let $k$ be an integer $>n/2$.
Let $\mu\in\mathbb R$ and let $a:\overline{\Omega}\times\mathbb
Z\rightarrow\mathbb C$ be such that
\begin{equation}\label{Hs-cond}
  \left| \partial^{\alpha}_{x} a(x,\xi) \right| \leq
  C \langle\xi\rangle^{\mu} \quad\textrm{ for all } (x,\xi)\in\overline{\Omega}\times\ind,
\end{equation}
and for all $\alpha$. Then operator ${\rm Op_L}(a)$ extends
to a bounded operator from $\mathcal H^{s}_{L}(\Omega)$ to
$\mathcal H^{s-\mu}_{L}(\Omega),$ for any $s\in\mathbb R.$
\end{corollary}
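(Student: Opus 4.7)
The plan is to reduce the Sobolev boundedness claim to the $L^{2}$-boundedness of Theorem \ref{L2-Bs} via conjugation with the order-reducing isomorphisms from Proposition \ref{SobHilSpace}, combined with the composition formula of Theorem \ref{Composition} and the equivalence between $\partial_{x}^{\alpha}$ and $D^{(\alpha)}_{x}$ supplied by Proposition \ref{TaylorExp}.

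I first introduce the L-Fourier multiplier $\Lambda^{t}$ with $x$-independent L-symbol $\langle\xi\rangle^{t}$,
$$
\Lambda^{t} f := \sum_{\xi\in\ind} \langle\xi\rangle^{t} \widehat{f}(\xi) u_{\xi}.
$$
By Proposition \ref{SobHilSpace} (this is the operator $\varphi_{-t}$ of that proposition) together with the identification $\mathcal H^{0}_{{\rm L}}(\Omega) = L^{2}(\Omega)$ provided by Parseval (Proposition \ref{PlanchId}), $\Lambda^{t}$ is an isometric isomorphism $\mathcal H^{r}_{{\rm L}}(\Omega) \to \mathcal H^{r-t}_{{\rm L}}(\Omega)$ for every $r,t\in\mathbb R$. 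Consequently, the boundedness ${\rm Op_{L}}(a):\mathcal H^{s}_{{\rm L}}\to\mathcal H^{s-\mu}_{{\rm L}}$ is equivalent to the $L^{2}$-boundedness of
$$
T := \Lambda^{s-\mu} \circ {\rm Op_{L}}(a) \circ \Lambda^{-s}.
$$

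Next I identify $\sigma_{T}$. Because $\Lambda^{-s}$ is a pure Fourier multiplier, the quantization formula \eqref{EQ: L-tor-pseudo-def} gives immediately that ${\rm Op_{L}}(a)\circ\Lambda^{-s}$ has L-symbol exactly $a(x,\xi)\langle\xi\rangle^{-s}$. For the left composition with $\Lambda^{s-\mu}$, Theorem \ref{Composition} yields the asymptotic expansion
$$
\sigma_{T}(x,\xi) \sim \sum_{\alpha\geq 0} \frac{1}{\alpha!}\bigl(\Delta^{\alpha}_{(x)} \langle\xi\rangle^{s-\mu}\bigr)\, D^{(\alpha)}_{x}\bigl(a(x,\xi)\langle\xi\rangle^{-s}\bigr).
$$
The leading $\alpha = 0$ term is $\langle\xi\rangle^{-\mu} a(x,\xi)$. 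Hypothesis \eqref{Hs-cond} gives $|\partial^{\beta}_{x}(\langle\xi\rangle^{-\mu} a(x,\xi))|\leq C$ uniformly in $(x,\xi)$, and by Proposition \ref{TaylorExp} — which expresses $\partial^{\beta}_{x}$ and $D^{(\beta)}_{x}$ as mutual linear combinations with smooth coefficients — the analogous bound holds for all $D^{(\beta)}_{x}$ derivatives. Thus the leading term satisfies the hypothesis of Theorem \ref{L2-Bs}, producing an $L^{2}$-bounded operator. For the higher-order corrections, $\Delta^{\alpha}_{(x)}\langle\xi\rangle^{s-\mu}$ reduces the $\xi$-weight by $|\alpha|$ (i.e. $\langle\xi\rangle^{s-\mu}\in S^{s-\mu}(\overline{\Omega}\times\ind)$), while $D^{(\alpha)}_{x}(a(x,\xi)\langle\xi\rangle^{-s})$ remains of order $\mu-s$, giving net order $-|\alpha|$. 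Each such symbol verifies, via Proposition \ref{TaylorExp} again, the bounded-$\partial^{\beta}_{x}$ hypothesis of Theorem \ref{L2-Bs}, and the (finite) truncation plus its smoothing remainder is $L^{2}$-bounded. Hence $T:L^{2}\to L^{2}$ is bounded, which was to be shown.

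The main obstacle is the rigorous justification that $\Delta^{\alpha}_{(x)}\langle\xi\rangle^{s-\mu}$ decreases order by $|\alpha|$ in this abstract biorthogonal setting: in the classical examples (tori, compact Lie groups) this follows from explicit computation with exponential/representation bases, but in general it rests on the strongly admissible collection $\{q_{j}\}$ interacting well with the weight $\langle\xi\rangle$ and the system $\{u_{\xi}\}$. Once this decay is granted, the rest of the argument — truncating the asymptotic expansion, bounding the remainder, and invoking Theorem \ref{L2-Bs} via the $\partial^{\alpha}_{x}$–$D^{(\alpha)}_{x}$ equivalence — is routine.
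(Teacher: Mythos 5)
Your argument is essentially the paper's: the paper proves this corollary in one sentence, invoking ``a suitable adaptation of the composition Theorem \ref{Composition}'' together with the fact (from Proposition \ref{TaylorExp}) that $\partial_x^\alpha$ and $D_x^{(\alpha)}$ are mutually expressible, and your conjugation with the order-reducing multipliers $\Lambda^t=\varphi_{-t}$ followed by an application of Theorem \ref{L2-Bs} is precisely the fleshing-out of that sentence. The obstacle you flag at the end --- that $\Delta_{(x)}^\alpha\langle\xi\rangle^{s-\mu}$ must gain $\rho|\alpha|$ decay for the composition expansion to be of decreasing order --- is a genuine hypothesis that the paper's terse proof also leaves implicit, so your proposal is consistent with, and slightly more candid than, the source.
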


By using Theorem \ref{El-ty} and Corollary \ref{Hs-Bs}, we get

\begin{theorem}\label{L2-Bs and El-ty}
Let $A$ be an elliptic pseudo-differential operator with {\rm L}-symbol
$\sigma_A\in S^{\mu}(\overline{\Omega}\times\ind)$,
$\mu\in\mathbb R$, and let $Au=f$ in $\Omega$, $u\in \mathcal
H^{\infty}_{{\rm L}}(\Omega)$. Then we have the estimate
$$
\|u\|_{\mathcal H^{s+\mu}_{{\rm L}}(\Omega)}\leq C_{sN}
(\|f\|_{\mathcal H^{s}_{{\rm L}}(\Omega)}+
\|u\|_{\mathcal H^{-N}_{{\rm L}}(\Omega)}).
$$
for any $s, N\in\mathbb R$.
\end{theorem}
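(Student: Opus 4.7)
The plan is to derive the estimate by applying a parametrix of $A$ to the equation $Au=f$, so that the elliptic regularity statement reduces to mapping properties of pseudo-differential operators on the Sobolev scale established earlier in the paper.

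First, by Theorem~\ref{El-ty}, since $\sigma_A \in S^\mu(\overline{\Omega}\times\ind)$ is elliptic, there exists $B\in \mathrm{Op_L}\,S^{-\mu}(\overline{\Omega}\times\ind)$ that is a left parametrix, i.e.\ $BA = I + R$ with $R\in \mathrm{Op_L}\,S^{-\infty}(\overline{\Omega}\times\ind)$. Applying $B$ to $Au=f$ yields the identity
\[
u = Bf - Ru,
\]
valid in $\mathcal H^{\infty}_{\rm L}(\Omega)$ by hypothesis. Taking the $\mathcal H^{s+\mu}_{\rm L}$-norm and using the triangle inequality gives
\[
\|u\|_{\mathcal H^{s+\mu}_{\rm L}(\Omega)} \le \|Bf\|_{\mathcal H^{s+\mu}_{\rm L}(\Omega)} + \|Ru\|_{\mathcal H^{s+\mu}_{\rm L}(\Omega)}.
\]

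Next, I would bound each term using Corollary~\ref{Hs-Bs}. For the first, since $\sigma_B\in S^{-\mu}(\overline{\Omega}\times\ind)$, the remark before Corollary~\ref{Hs-Bs} (that $\partial^\alpha_x$ and $D^{(\alpha)}_x$ are interchangeable modulo smooth coefficients through Proposition~\ref{TaylorExp}) shows that $|\partial^\alpha_x \sigma_B(x,\xi)| \le C_\alpha \langle\xi\rangle^{-\mu}$ for all $\alpha$. Hence $B$ maps $\mathcal H^s_{\rm L}(\Omega)$ boundedly into $\mathcal H^{s+\mu}_{\rm L}(\Omega)$, giving $\|Bf\|_{\mathcal H^{s+\mu}_{\rm L}} \le C_s \|f\|_{\mathcal H^s_{\rm L}}$. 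For the second, the symbol of $R$ satisfies $|\partial^\alpha_x \sigma_R(x,\xi)| \le C_{\alpha,M} \langle\xi\rangle^{-M}$ for every $M$, so Corollary~\ref{Hs-Bs} (applied with order $-M$ for $M=s+\mu+N$) yields the boundedness $R:\mathcal H^{-N}_{\rm L}(\Omega)\to \mathcal H^{s+\mu}_{\rm L}(\Omega)$, i.e.\ $\|Ru\|_{\mathcal H^{s+\mu}_{\rm L}} \le C_{s,N} \|u\|_{\mathcal H^{-N}_{\rm L}}$. Combining the two bounds produces the claimed inequality.

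The only real obstacle is purely bookkeeping: one must verify that Corollary~\ref{Hs-Bs} is indeed applicable to $B$ and $R$, which both live in the symbol classes defined via the difference operators $\Delta^\alpha_{(x)}$ and the intrinsic derivatives $D^{(\beta)}_x$, rather than the Euclidean derivatives $\partial^\alpha_x$ that appear in the hypothesis \eqref{Hs-cond}. This is precisely the content of the observation made just before Corollary~\ref{Hs-Bs}: by the recurrence in Proposition~\ref{TaylorExp}, any $\partial^\alpha_x$ is a finite linear combination of $D^{(\beta)}_x$ with $|\beta|\le |\alpha|$ and smooth coefficients (and vice versa), so the symbol estimates for $\sigma_B$ and $\sigma_R$ transfer directly into the form required by \eqref{Hs-cond}. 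Once this translation is made, the argument above closes, and the $a$-priori estimate follows for all $s,N\in\mathbb R$.
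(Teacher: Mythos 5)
Your proof is correct and follows essentially the same route as the paper: construct the parametrix $B=A^{\sharp}$ from Theorem \ref{El-ty}, write $u=A^{\sharp}f-Ru$ with $R\in{\rm Op_L}(S^{-\infty})$, and conclude via the Sobolev boundedness of Corollary \ref{Hs-Bs}. Your explicit remark on translating between $\partial^\alpha_x$ and $D^{(\beta)}_x$ via Proposition \ref{TaylorExp} is a useful clarification of a step the paper leaves implicit.
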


\begin{proof} Since $A$ is an elliptic pseudo--differential operator with L-symbol
$\sigma_A\in S^{\mu}(\overline{\Omega}\times\ind)$, by
Theorem \ref{El-ty} there exists a pseudo-differential operator
$A^{\sharp}$ with symbol $\sigma_{A^{\sharp}}\in
S^{-\mu}(\overline{\Omega}\times\ind)$, such that
$$
AA^{\sharp}=A^{\sharp}A=I+R,
$$
where $R\in {\rm Op_L}(S^{-\infty})$. Thus
$$A^{\sharp}f=A^{\sharp}Au=(I+R)u$$
and
$$
u=A^{\sharp}f-Ru.
$$
Now, for $f\in\mathcal
 H^{s}_{{\rm L}}(\Omega)$ we have $A^{\sharp}f\in\mathcal
 H^{s+\mu}_{{\rm L}}(\Omega).$
As $u\in \mathcal H^{\infty}_{{\rm L}}(\Omega)$ there exists
$s_{0}\in\mathbb R$ such that $u\in \mathcal H^{s_{0}}_{{\rm
L}}(\Omega)$. Then $Ru\in C_{{\rm
L}}^{\infty}(\overline{\Omega})$. Hence, we have
$$
u=(A^{\sharp}f-Ru)\in\mathcal
 H^{s+\mu}_{{\rm L}}(\Omega).
$$
By using Corollary \ref{Hs-Bs}, we complete the proof.
\end{proof}


\end{document}